\newtheorem{theorem}{Theorem}[section]
\newtheorem{lemma}{Lemma}[section]
\newtheorem{corollary}{Corollary}[section]
\newtheorem{proposition}{Proposition}[section]
\newtheorem{definition}{Definition}[section]
\newtheorem{example}{Example}
\newtheorem{remark}{Remark}[section]
\newtheorem{assumption}{Assumption}
\numberwithin{equation}{section}
\DeclareMathOperator\diag{diag}
\DeclareMathOperator\Tr{Tr}
\begin{document}

\title[Linear quadratic mean field social optimization]{Linear quadratic mean field social optimization:
Asymptotic solvability and decentralized control}

\author{Minyi Huang}
\address{School  of Mathematics and Statistics, Carleton University,
Ottawa, ON K1S 5B6, Canada}
\email{mhuang@math.carleton.ca}

\author{Xuwei Yang}
\address{School of Mathematics and Statistics, Carleton University, Ottawa, ON K1S 5B6, Canada}
\email{xuweiyang@cunet.carleton.ca}

\keywords{Optimal control, mean field, social optimization, large scale systems, dynamic programming, Riccati equations}

\begin{abstract}
This paper studies asymptotic solvability of a linear quadratic (LQ) mean
field social optimization problem with controlled diffusions and indefinite state and control weights. Starting with an $N$-agent model,  we employ a rescaling approach to derive a low-dimensional Riccati ordinary differential equation (ODE) system, which characterizes
a necessary and sufficient condition for asymptotic solvability.
The  decentralized control obtained from the mean field limit ensures a bounded optimality loss
in minimizing the social cost having magnitude $O(N)$, which implies an
 optimality loss of $O(1/N)$ per agent. We further  quantify the efficiency  gain of the social optimum with respect to the solution of  the mean field game.
\end{abstract}

\maketitle

\tableofcontents

\section{Introduction}
\label{sec:intro}

In social optimization problems, multiple interacting agents have individual performance objectives but cooperatively optimize for the goal  of the whole group. For instance,  such scenarios arise in communication networks seeking network utility maximization, where the total utility of the users is maximized
\cite{CL2007,HTK2017,KMT1998}; the extension to the case of multi-period optimization can be found in \cite{HTK2017,TZB2008}.  Similarly, in the economic literature  social welfare functions have long been studied  \cite{MWG1995,M2004} as a key concept of welfare economics. A typical form
that they take is the sum of the individual utilities and is accordingly called a utilitarian social welfare function.

In this paper, we are concerned with
 mean field social optimization  involving $N$ agents which have mean field  coupling  through their individual  dynamics and costs and minimize a
social cost.
Consider a system of $N$ agents, denoted by $\mathcal{A}_i$, $1\leq i \leq N$. The state process $X_i(t)$ of $\mathcal{A}_i$ satisfies the following stochastic differential equation (SDE)
\begin{align}
\label{X_i}
d X_i(t) & = (A X_i(t) + Bu_i(t) + G X^{(N)}(t))  dt
 + (B_1 u_i(t) + D) d W_i(t)   \\
& \quad + ( B_0 u^{(N)}(t) + D_0 ) d W_0(t) , \notag
\end{align}
where we have the state $X_i(t) \in \mathbb{R}^n$, the control $u_i(t)\in
\mathbb{R}^{n_1}$,  the mean field state $X^{(N)} \coloneqq(1/N)\sum_{i=1}^N X_i$, and the control mean field  $u^{(N)} \coloneqq (1/N)\sum_{i=1}^N u_i$.
The initial states $\{X_i(0): 1 \leq i \leq N\}$ are independent with $\mathbb{E}X_i(0)= x_i(0)$ and $\mathbb{E}|X_i(0)|^2<\infty$. The individual noise processes $\{W_i: 1\leq i \leq N\}$ are $1$-dimensional independent standard Brownian motions, which are also independent of $\{X_i(0): 1\leq i \leq N\}$.
The common noise $W_0$ is a $1$-dimensional standard Brownian motion independent of $\{W_i: 1 \leq i \leq N\}$ and $\{X_i(0): 1 \leq i \leq N\}$.

The individual cost of agent ${\mathcal A}_i$, $1\leq i \leq N$, is given
by
\begin{align}
J_i(u_{1}, \cdots, u_N) & = \mathbb{E} \bigg[ \int_0^T \Big( [X_i(t) - \Gamma X^{(N)}(t)]_Q^2 + [u_i(t)]_{R}^2 \Big)   dt \nonumber \\
&\qquad +  [X_i(T) - \Gamma_f X^{(N)}(T)]_{Q_f}^2 \bigg] , \nonumber
\end{align}
where we denote the quadratic form $[y]_M^2 = y^T M y$ for a symmetric matrix $M$.
The social cost is defined as \begin{align}
J_{\rm soc}^{(N)}( u_{1}, \cdots, u_N)
&\coloneqq   \sum_{i=1}^N J_i  (u_{1}, \cdots, u_N  )  .
\label{J soc}
\end{align}
The constant matrices  $A$, $B$, $B_0$ $B_1$, $D$, $D_0$, $G$, $\Gamma$, $Q$, $R$, $\Gamma_f$ and $Q_f$ above have compatible dimensions, and $Q$,
$R$ and $Q_f$ are symmetric matrices.
The weight matrices $Q$, $R$ and $Q_f$ may be indefinite.
 LQ stochastic optimal control with indefinite control weights was first studied in \cite{CLZ1998}, which shows that the optimal control problem may still be well posed when the control enters the diffusion term. The more general case with indefinite state and control weight matrices are treated in \cite{RMZ2001,YZ1999}. It is shown in
\cite{RMZ2001} that the solvability of the stochastic optimal control problem is equivalent to the solvability of a generalized differential Riccati-type equation. For discrete-time LQ control problems with indefinite weight matrices, see
\cite{FN2015,RCZ2002}.

The above social optimization model differs from
mean field games in that the agents in the  latter are non-cooperative.
For general theory and applications of mean field games, the reader is referred to
\cite{BFY2013,CHM2017,C2013,CS2015,CD2018,GS2014,G2016,GLL2011,HMC2006,LL2007}.
LQ mean field games are a particularly attractive class of problems due to their explicit solutions \cite{BP2014,BSYY2016,HWW2016,HCM2007,MB2017,SMN2018}.

There has been a growing literature related to  mean field social optimization.  An LQ mean field social optimization problem has been considered in \cite{HCM2012}
with additive noise and positive definite  control weight and positive semi-definite state weight. That work constructs the limiting decision problems for the individual agents by use of the person-by-person (PbP) optimality principle
where a selected agent takes non-anticipative control perturbations. This
method is applied to  a nonlinear model in \cite{SHM2016}. The  work \cite{CH2019} studies social optimization with indefinite state weight.
Social optima are analyzed in \cite{WZ2017} for a mean field jump LQ  model governed by a common Markovian chain. An LQ social optimum model is studied in  \cite{SNM2018} for a large number of weakly coupled agents choosing cooperatively between multiple destinations.  A nonlinear social optimization problem for an infinite horizon economy is analyzed in \cite{NM2018}, where necessary conditions of the social optimum are derived by using G\^ateaux derivatives and Lagrangian multipliers treating market clearing
equality constraints.
A discrete-time LQ social optimization problem involving a finite number of  subsystems with mean-field state coupling is analyzed in \cite{AM2015} to obtain optimal control laws for both full observation and partial observation cases; this problem is called team-optimization to emphasize decentralized information structures. Further analysis of the mean field limit is developed in
\cite{AM2017}. Static mean field teams  with general costs are studied in
\cite{SY2020} under certain symmetry assumptions. It is shown that the solution obtained in  the limit problem has asymptotic optimality for the model with a finite number of agents. Mean field optimal control and flocking behavior of many interacting agents can be found in \cite{ACFK2017,FS2014}.

For a given $N$, the social optimum with the additive social cost  may be
viewed as a particular way of achieving a Pareto efficient solution in the sense that no individual can further improve for itself without causing
at least another agent to get worse. But Pareto optimality is a much weaker optimality notion and usually contains a set of qualified solutions. The reader may consult
\cite{E2010} for characterization of Pareto efficient solutions in cooperative differential games.
Both mean field games and mean field social optima are analyzed and compared in \cite{CR2019,LZZ2019}. The bounds for their efficiency difference are provided in \cite{CR2019} while \cite{LZZ2019} shows that the mean field equilibrium may be interpreted as the solution of a modified social optimization problem. Performance comparisons of the two solution approaches are presented in \cite{WWHS2021}
for a static mean field model arising in dense wireless networks.

\subsection{Related literature on mean field type optimal control}

It is worthwhile to mention the related area of mean field type optimal control problems which involve the state process together with its mean \cite{AD2011} or  its distribution (see e.g. \cite{BFY15,L2017}). Moreover, the model  involves only one decision maker, which immediately affects the distribution of the underlying state process. The optimal control is characterized by a stochastic maximum principle under a convex control set in \cite{AD2011}, and this approach is extended to deal with more general
dynamics and a possibly non-convex control set \cite{BLM2016}, which derives the maximum principle containing a second order adjoint equation. For LQ mean field type optimal control,  \cite{Y2013} derives the solution via
a system of forward-backward stochastic differential equations (FBSDEs) and further decouples the FBSDEs by Riccati equations to obtain the optimal control in an explicit form. For discrete-time mean field type control,
the reader is referred to \cite{ELN2013,NZL2015}.

Instead of just including mean terms in the model, the more general framework considers optimal control of McKean--Vlasov dynamics, where both the
system state and its law appear in the dynamics and costs. The optimal control law has the interpretation of a cooperative equilibrium in a large population model of $N$ agents coupled by the empirical distribution of their states \cite{CDL2013}, but the search for this cooperative equilibrium is based on the restriction that all agents use the same local feedback
control law $\varphi(t, X_i)$ to test optimality. The connection between large population social optimal control and optimal control of McKean--Vlasov dynamics is further addressed in \cite{L2017} and \cite{DPT20}. It is shown in \cite{L2017} that  the social optimal control problems of a large number of interacting state processes may be connected with optimal control problems of McKean--Vlasov type. Specifically, each relaxed optimal control of the McKean--Vlasov model may be obtained as the limit of  relaxed $\epsilon_N$-optimal controls for the $N$-agent  social optimal control problems, where $\epsilon_N\to 0$ as $N\to \infty$. Similar limit theorems are obtained in \cite{DPT20} for more general system dynamics together with  common noise, where the state equation uses a conditional law of the state-control pair given the common noise.  The dynamic programming approach is applied to mean field type optimal control in \cite{BFY15} to derive the so-called master equation.  For McKean--Vlasov optimal control problems with common noise, the dynamic programming principle is established in \cite{BCP2018,PW2017} by taking the distribution of the state as an abstract state subject to stochastic McKean--Vlasov dynamics.
An application to LQ optimal control problems is presented in \cite{P2016} dealing with positive (semi-)definite weight matrices.

While there is a close connection between large population social optimal
control and McKean--Vlasov optimal control (or mean field type control in
general) as analyzed in \cite{L2017,DPT20}, the two classes of models have crucial differences.
Firstly, the actual mechanisms affecting the mean field are different in that a single agent in the social optimization model has little impact on
the mean field.
Secondly, the McKean--Vlasov optimal control model typically assumes homogeneous agents while social optimization allows heterogeneity; for instance, the LQ model in \cite{HCM2012} allows the agents to have individual dynamic parameters varying from a continuum.
 Finally, the two classes of problems interpret  time consistency differently, and the social optimum may easily attain time consistency due to the particular mechanism generating the mean field (this point will be illustrated by  examples in Section \ref{sec:sub:compMFTC}).

\subsection{Our approach}

Our  study  of the model \eqref{X_i}--\eqref{J soc} deals with
control-dependent noises and indefinite control and state weight matrices.   More importantly, here we take a new perspective by adopting a notion called asymptotic solvability.
Roughly speaking, asymptotic solvability, which is formally defined in Section 3, is the solvability of the social optimization problem
\eqref{X_i}--\eqref{J soc} as $N\to\infty$.
Some early analysis has been presented in the conference paper \cite{HY2019a}.
The asymptotic solvability approach was initially developed in LQ mean field games \cite{HZ2018a,HZ2020}; that approach attempts to answer such a question for the games: Does there exist an intrinsic low-dimensional object that governs the large system's solution  generating a good asymptotic  behavior when the population size tends to infinity. The approach shares similarity with the convergence problem in the direct approach of mean field games \cite{CDLL15}. In \cite{HZ2018a,HZ2020}, a necessary and sufficient condition for asymptotic solvability of $N$-player LQ mean field games is obtained through analyzing a low-dimensional ODE system derived by applying a rescaling method to the sequence of high-dimensional centralized solutions.  Asymptotic solvability of LQ mean field games with a major player is studied in  \cite{MH2020}.

For the $N$-agent LQ optimal control problem \eqref{X_i}--\eqref{J soc} with indefinite weights, one in principle may use a Riccati equation to determine feedback optimal control \cite{SLY2016}. Due to the indefinite weights, the equation does not always have a solution, and determining the existence of a solution becomes a highly nontrivial task, especially when
$N$ is large.
This poses a conceptual obstacle before we can even think of deriving a mean field limit from a centralized solution and consider decentralized individual controls with little optimality loss. In this case, our formulation of the asymptotic solvability problem is particularly relevant for addressing this existence issue by coming up with a simple  criterion. Then our further analysis will  show that some simpler limiting objects (as two ODEs in a
lower-dimensional space) encodes all essential information for a well
behaved system when $N\to \infty$.
Specifically, our starting point here is to  apply dynamic programming to
derive the large-scale Riccati equation. This approach has several advantages for the present model over the person-by-person optimality argument in \cite{HCM2012}. First, the Riccati equation-based approach, as long as
its solvability holds, ensures optimality from the beginning. In contrast, the
PbP optimality-based approach is much harder to apply due to the common noise. Second, the large Riccati equation is particularly suitable  for applying the rescaling technique as in \cite{HZ2020}.
The LQ social optimization model in \cite{CH2018,HCM2012} involves
positive semi-definite state weight and positive definite control weight,
and the players have only independent noises.

Next, we determine  the closed-loop dynamics under the centralized  optimal control $U^o$ and use the mean field limit to derive a set of decentralized individual controls $U^d$ for which each agent uses only its own state and the mean field limit state. While the social optimum $J_{\rm soc}^{(N)}(U^o) $ has magnitude $O(N)$,
the decentralized control is shown to achieve bounded optimality loss with respect to the social optimum as $N\to \infty$, i.e., $0\le  J_{\rm soc}^{(N)}(U^d)-J_{\rm soc}^{(N)}(U^o)=O(1) $,  which is  tighter than the  upper bound  $O(\sqrt{N})$ for optimality loss obtained by the method
 in \cite{HCM2012}.
In \cite{AM2017} it is shown for a mean field team the mean field limit based policy can have an overall optimality loss of $O(1)$, but they consider
a relatively simple linear model with uncontrolled noise and do not face high nonlinearity of the Riccati equation. Their model has positive definite weights, and  asymptotic solvability automatically holds.

We also note that the limit theorems in \cite{L2017,DPT20} relate mean field social optimization to control of McKean--Vlasov dynamics. Their nonlinear models have much generality but the compactness-based analysis needs restrictive conditions on the control caused growth of the cost integrand.
These conditions cannot cover our indefinite quadratic cost.

\subsection{Contributions and organization}
 We extend the asymptotic solvability notion, initially introduced for mean field games, to social optimization. A key feature of our system is that the state and control weight matrices may be indefinite.
 Due to the highly nonlinear Riccati ODEs resulting from controlled diffusion terms, the development of the rescaling technique is more challenging than in  \cite{HZ2018a,HZ2020,MH2020}.
We further obtain a tight upper bound  of the optimality loss of the obtained decentralized controls, and quantify the efficiency gain with respect to mean field game solutions.

The paper is organized as follows.
In Section \ref{sec: lq mf soc}, we introduce the LQ mean field social optimization model and derive the large-scale Riccati equation for the optimal control.
 Section \ref{sec: AS}  introduces  the asymptotic solvability notion and
      presents a necessary and sufficient condition for asymptotic solvability via a low-dimensional Riccati ODE system.
Section~\ref{sec: cl dynamics}  gives the closed-loop state dynamics under the optimal control and its mean field limit.
Section~\ref{sec: performance analysis}  analyzes the associated decentralized  control by proving a bounded optimality gap result, and compares the performance with the mean field game solution.
This section also compares mean field social optimization with mean field type optimal control.
Section~\ref{sec: num} gives some numerical examples.
Section~\ref{sec: conclusion} concludes the paper.

\subsection{Notation}

We use $I$ to denote an identity matrix of compatible dimensions, and sometimes write $I_k$ to indicate the $k\times k$ identity matrix.
For a vector or matrix $F$, $|F|$ denotes the Euclidean norm of $F$.
For any $l\times m$ matrix $Z = (z_{ij})_{1\leq i \leq l, 1 \leq j \leq
m}$, we denote the $l_1$-norm $\left\| Z \right\|_{l_1}:= \sum_{i, j} |z_{ij}|$.
Let ${\mathcal S}^n$ be the set of $n\times n$ real symmetric matrices.
We denote by $\mathbf{1}_{k\times l}$  a $k\times l$ matrix with all entries equal to $1$, by $\otimes$ the Kronecker product, and by the column vectors $\{e^k_1, \cdots, e^k_k\}$ the canonical basis of $\mathbb{R}^k$.

\section{State feedback for LQ social optimization}
\label{sec: lq mf soc}

Define
\begin{align}
& X(t) = \begin{bmatrix}X_1(t) \\ \vdots \\ X_N(t) \end{bmatrix} \in \mathbb{R}^{Nn}, \quad
 U(t) = \begin{bmatrix} u_1(t) \\ \vdots \\ u_N(t) \end{bmatrix} \in \mathbb{R}^{N n_1}, \notag \\
&  \mathbf{A}
= \diag[A, \cdots, A] + \mathbf{1}_{N \times N} \otimes \tfrac{G}{N} \in \mathbb{R}^{Nn \times Nn} ,  \notag \\
&  \mathbf{B}_0 =  \mathbf{1}_{N\times 1} \otimes  \tfrac{B_0}{N} \in
\mathbb{R}^{Nn \times n_1} , \quad
\mathbf{D}_0 = \mathbf{1}_{N\times 1} \otimes D_0 \in \mathbb{R}^{Nn \times 1} ,   \notag \\
&  \widehat{\mathbf{B}}_k = e^N_k \otimes B  \in \mathbb{R}^{Nn \times n_1} , \quad
 \mathbf{B}_k = e^N_k \otimes B_1 \in \mathbb{R}^{Nn\times n_1} , \nonumber\\
&
 \mathbf{D}_k = e^N_k \otimes D \in \mathbb{R}^{Nn \times 1} , \quad 1 \leq k \leq N .   \notag
\end{align}
We write \eqref{X_i} in a compact form:
\begin{align}
d X(t) & =  \Big( \mathbf{A} X(t) + \sum_{i=1}^N \widehat{\mathbf{B}}_i u_i(t) \Big) dt + \sum_{i=1}^N ( \mathbf{B}_i u_i(t) + \mathbf{D}_i ) d W_i\label{X}\\
&\quad + \Big( \mathbf{B}_0 \sum_{i=1}^N u_i(t)  + \mathbf{D}_0 \Big) d W_0 .\nonumber
\end{align}
Define matrices:
\begin{align}
& \mathbf{Q}_1 =  \diag\left[Q, \cdots, Q \right] \in \mathbb{R}^{Nn\times Nn}, \qquad
\mathbf{Q}_2 = \mathbf{1}_{N\times N} \otimes
(Q^\Gamma /{N} ) \in \mathbb{R}^{Nn\times Nn} ,
\notag \\
& \mathbf{Q}_{1f} =  \diag\left[Q_f, \cdots, Q_f \right] \in \mathbb{R}^{Nn\times Nn},
\hspace{0.2cm}
\mathbf{Q}_{ 2f} = \mathbf{1}_{N\times N} \otimes (Q^\Gamma_f/{N} ) \in
\mathbb{R}^{Nn\times Nn} , \nonumber \\
&  \mathbf{Q} = \mathbf{Q}_1 + \mathbf{Q}_2  , \quad
  \mathbf{Q}_f = \mathbf{Q}_{1f} + \mathbf{Q}_{2f} ,
\hspace{0.46cm}
\mathbf{R} = \diag[R, \cdots, R] \in \mathbb{R}^{N n_1 \times N n_1} ,
\notag
\end{align}
where
\begin{align}
Q^\Gamma ={\Gamma^T Q \Gamma -Q \Gamma - \Gamma^T Q}, \qquad Q_f^\Gamma
={\Gamma_f^T Q_f \Gamma_f - Q_f \Gamma_f - \Gamma_f^T Q_f}.
\label{QGamma}
\end{align}

The social cost \eqref{J soc} may be rewritten as
\begin{align}
J_{\rm soc}^{(N)}(U) = & \mathbb{E} \bigg[ \int_0^T ( [ X(t) ]_\mathbf{Q}^2
+ [ U(t) ]_\mathbf{R}^2  ) dt
 + [X(T)]_{\mathbf{Q}_f}^2   \bigg] .
\label{J soc 2}
\end{align}

\subsection{The formal derivation of the Riccati equation}

Denote the  value function by $V(t, \mathbf{x})$ corresponding to the initial condition
$X(t) = \mathbf{x} = (x_1^T, \cdots, x_N^T)^T$ at time $t$.
The Hamilton--Jacobi--Bellman (HJB) equation of $V(t, \mathbf{x})$ is
\begin{align}
&  - \frac{\partial V}{\partial t} =
\min_{U\in \mathbb{R}^{N n_1}} \Big[ U^T \Big( \mathbf{R} + \mathcal{M}_2 \Big( \frac{\partial^2 V}{\partial \mathbf{x}^2}\Big) \Big) U + \Big( \frac{\partial^T V}{\partial \mathbf{x}} \widehat{\mathbf{B}} + \mathcal{M}_1 \Big(\frac{\partial^2 V}{\partial \mathbf{x}^2} \Big) \Big) U \Big]\label{HJB V}  \\
&\qquad\qquad\qquad\qquad+ \frac{\partial^T V}{\partial \mathbf{x}} \mathbf{A} \mathbf{x} + \mathbf{x}^T \mathbf{Q} \mathbf{x} + \mathcal{M}_0 \Big(\frac{\partial^2 V}{\partial \mathbf{x}^2} \Big)  ,
 \nonumber \\
& V(T, \mathbf{x})  = \mathbf{x}^T \mathbf{Q}_f \mathbf{x} , \notag
\end{align}
where we define the mappings
\begin{align}
& \mathcal{M}_0(Z) =    \frac{1}{2} \sum_{i=1}^N \mathbf{D}_i^T Z \mathbf{D}_i   + \frac{1}{2} \mathbf{D}_0^T Z \mathbf{D}_0 , \quad
 \mathcal{M}_1(Z) =  \sum_{i=1}^N \mathbf{D}_i^T Z \mathbf{B}_i
\mathbf{e}_i
 +  \mathbf{D}_0^T Z \mathbf{B}_0 \widehat{\mathbf{I}} , \notag \\
& \mathcal{M}_2(Z) =   \frac{1}{2} \sum_{i=1}^N \mathbf{e}_i^T \mathbf{B}_i^T Z \mathbf{B}_i \mathbf{e}_i + \frac{1}{2} \widehat{\mathbf{I}}^T
\mathbf{B}_0^T Z \mathbf{B}_0 \widehat{\mathbf{I}} ,
\quad  Z \in \mathbb{R}^{Nn\times Nn}  , \notag
\end{align}
which are from ${\mathbb R}^{Nn\times Nn}$ to $\mathbb{R}$, $\mathbb{R}^{1\times N n_1}$, and $\mathbb{R}^{Nn_1\times Nn_1}$, respectively,
and
\begin{align}
& \widehat{\mathbf{I}} = \mathbf{1}_{1\times N} \otimes I_{n_1} = \begin{pmatrix} I_{n_1}, \cdots, I_{n_1}   \end{pmatrix} \in \mathbb{R}^{n_1
\times N n_1} ,
\quad
 \widehat{\mathbf{B}} = (\widehat{\mathbf{B}}_1, \cdots, \widehat{\mathbf{B}}_N)
\in \mathbb{R}^{ Nn \times N n_1 } ,  \notag \\
& \mathbf{e}_i = ( e^N_i \otimes I_{n_1} )^T
= \begin{pmatrix} 0 ,  \cdots,  I_{n_1},  \cdots ,  0 \end{pmatrix}
\in \mathbb{R}^{n_1 \times N n_1} ,\quad
1\leq i \leq N .        \notag
\end{align}
The minimizer in \eqref{HJB V} is
\begin{align}
U = - \frac{1}{2} \Big( \mathbf{R} + \mathcal{M}_2 \Big(\frac{\partial^2 V}{\partial \mathbf{x}^2}\Big) \Big)^{-1} \Big(\frac{\partial^T V}{\partial \mathbf{x}} \widehat{\mathbf{B}} + \mathcal{M}_1 \Big(\frac{\partial^2 V}{\partial \mathbf{x}^2} \Big) \Big)^T ,
\label{U 1}
\end{align}
provided that $\mathbf{R} + \mathcal{M}_2( \tfrac{\partial^2 V}{\partial \mathbf{x}^2})$ is positive-definite.

We substitute the minimizer \eqref{U 1} into \eqref{HJB V} to obtain
\begin{align}
 - \frac{\partial V}{\partial t} = &
-  \frac{1}{4} \Big(\frac{\partial^T V}{\partial \mathbf{x}}
 \widehat{\mathbf{B}}
 +  \mathcal{M}_1\Big(\frac{\partial^2 V}{\partial \mathbf{x}^2}\Big)\Big)
\Big(\mathbf{R} + \mathcal{M}_2\Big(\frac{\partial^2 V}{\partial \mathbf{x}^2}\Big) \Big)^{-1}   \Big(\frac{\partial^T V}{\partial \mathbf{x}} \widehat{\mathbf{B}} + \mathcal{M}_1\Big(\frac{\partial^2 V}{\partial \mathbf{x}^2}\Big)\Big)^T  \notag  \\
& +  \mathcal{M}_0\Big(\frac{\partial^2 V}{\partial \mathbf{x}^2}\Big)
 + \frac{\partial^T V}{\partial \mathbf{x}} \mathbf{A} \mathbf{x} + \mathbf{x}^T \mathbf{Q} \mathbf{x}.
\label{HJB equation under optimal control}
\end{align}
Suppose $V(t, \mathbf{x})$ takes the following form
\begin{align}
V(t, \mathbf{x}) = \mathbf{x}^T \mathbf{P}(t) \mathbf{x} + 2 \mathbf{x}^T \mathbf{S}(t)  + {\bf r}(t) ,
\label{V ansatz}
\end{align}
where $\mathbf{P}$ is symmetric.
We substitute \eqref{V ansatz} into \eqref{HJB equation under optimal control} to derive the ODE system of $\mathbf{P}(t)$, $\mathbf{S}(t)$, and $\mathbf{r}(t)$:
\begin{align}
& \begin{cases}
\dot{\mathbf{P}}(t) = \mathbf{P} \widehat{\mathbf{B}}
 \left( \mathbf{R} + 2 \mathcal{M}_2( \mathbf{P} )  \right)^{-1} \widehat{\mathbf{B}}^T \mathbf{P}   -  \mathbf{P} \mathbf{A}
- \mathbf{A}^T \mathbf{P} - \mathbf{Q} , \\
\mathbf{P}(T) =  \mathbf{Q}_f , \quad
\mathbf{R} + 2 \mathcal{M}_2 (\mathbf{P}(t))  > 0 , \quad \forall t \in [0, T],
\end{cases} \label{ODE P} \\
& \begin{cases}\dot{\mathbf{S}}(t) = \mathbf{P} \widehat{\mathbf{B}} ( \mathbf{R} + 2 \mathcal{M}_2(\mathbf{P}) )^{-1} (\widehat{\mathbf{B}}^T \mathbf{S} + \mathcal{M}_1^T(\mathbf{P})) - \mathbf{A}^T \mathbf{S},
\\\mathbf{S}(T) = 0, \end{cases}     \label{ODE S} \\
 & \begin{cases}\dot{\mathbf{r}} (t) = ( \mathbf{S}^T \widehat{\mathbf{B}} + \mathcal{M}_1(\mathbf{P})) ( \mathbf{R} +  2 \mathcal{M}_2(\mathbf{P}) )^{-1}( \widehat{\mathbf{B}}^T \mathbf{S} +  \mathcal{M}_1^T(\mathbf{P}) )\\ \qquad\quad - 2 \mathcal{M}_0(\mathbf{P}) ,   \\\mathbf{r}(T) =
0.
\end{cases} \label{ODE r}
\end{align}

\begin{remark}\label{remark:Punique}
If $\mathbf{P}$ is a solution of the Riccati ODE \eqref{ODE P} on $[0,T]$, it is the unique solution. This holds since  the vector field of the ODE has a local Lipschitz property along the solution trajectory satisfying
$\mathbf{R} + 2 \mathcal{M}_2 (\mathbf{P}(t))  > 0$.
\end{remark}
\begin{remark}
\label{rmk: sol P S r}
If \eqref{ODE P} has a (unique) solution on $[0, T]$, then after substituting
$\mathbf{P}$, \eqref{ODE S} 
becomes a linear ODE  of $\mathbf{S}$ and has a unique solution on $[0,T]$.  We further uniquely solve
 ${\bf r}$  on $[0, T]$.
\end{remark}
The inverse matrix $( \mathbf{R} + 2 \mathcal{M}_2(\mathbf{P}))^{-1}$ involving
$\mathbf{P}$ results in high nonlinearity of the Riccati ODE \eqref{ODE P}.
This is due to the control dependent noises in \eqref{X_i}.

In the above, the HJB equation is used to provide  a formal derivation of
the ODE system of $(\mathbf{P}, \mathbf{S}, \mathbf{r})$.
The following theorem gives the optimal feedback control law $U^o(t)$ using the ODEs \eqref{ODE P}--\eqref{ODE S}. We can show the optimality of $U^o(t)$ by applying a completion-of-squares technique to the cost.

\begin{theorem}
\label{thm: U}
Suppose  that \eqref{ODE P} has a solution $\mathbf{P}$ on $[0, T]$.
Then we may uniquely solve \eqref{ODE S} and \eqref{ODE r}, and the social optimal control under the cost \eqref{J soc 2} is
\begin{align}
U^o(t) = -  ( \mathbf{R} + 2 \mathcal{M}_2(\mathbf{P}(t)) )^{-1}
[ \widehat{\mathbf{B}}^T ( \mathbf{P}(t) X(t) + \mathbf{S}(t) ) + \mathcal{M}_1^T(\mathbf{P}(t)) ] .
\label{U 2}
\end{align}
The optimal cost with the initial condition $(t, \mathbf{x})$ is given by
\eqref{V ansatz}.
\end{theorem}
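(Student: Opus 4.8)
The plan is to prove optimality by a completion-of-squares argument built on the quadratic ansatz \eqref{V ansatz}. First, the existence and uniqueness of $\mathbf{S}$ and $\mathbf{r}$ are immediate from Remark \ref{rmk: sol P S r}: once $\mathbf{P}$ is fixed as the solution of \eqref{ODE P}, the matrix $\mathbf{R} + 2\mathcal{M}_2(\mathbf{P}(t))$ is positive definite, hence invertible with a continuous inverse on $[0,T]$, so \eqref{ODE S} is a linear ODE in $\mathbf{S}$ with continuous coefficients and admits a unique solution, after which $\mathbf{r}$ is recovered by directly integrating the continuous right-hand side of \eqref{ODE r}.

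For the optimality, I would fix an arbitrary admissible control $U$ (adapted and square-integrable, say), with associated state $X$ solving \eqref{X}, and apply It\^o's formula to $V(t, X(t))$ with $V$ given by \eqref{V ansatz}. Since $V$ is quadratic with $\tfrac{\partial V}{\partial \mathbf{x}} = 2\mathbf{P}X + 2\mathbf{S}$ and $\tfrac{\partial^2 V}{\partial \mathbf{x}^2} = 2\mathbf{P}$, the second-order It\^o term produced by the diffusion coefficients $\mathbf{B}_i u_i + \mathbf{D}_i$ and $\mathbf{B}_0 \sum_i u_i + \mathbf{D}_0$ equals, after using $\mathbf{e}_i U = u_i$ and $\widehat{\mathbf{I}} U = \sum_i u_i$, precisely $2U^T\mathcal{M}_2(\mathbf{P})U + 2\mathcal{M}_1(\mathbf{P})U + 2\mathcal{M}_0(\mathbf{P})$. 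Integrating over $[0,T]$ and taking expectations removes the martingale part, and invoking the terminal data $\mathbf{P}(T) = \mathbf{Q}_f$, $\mathbf{S}(T) = 0$, $\mathbf{r}(T) = 0$ yields $\mathbb{E}[X(T)^T\mathbf{Q}_f X(T)] = V(0, X(0)) + \mathbb{E}\int_0^T(\text{drift of }dV)\,dt$.

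The core step is then to add $\mathbb{E}\int_0^T([X]_{\mathbf{Q}}^2 + [U]_{\mathbf{R}}^2)\,dt$ to both sides and substitute the three ODEs \eqref{ODE P}--\eqref{ODE r} into the combined integrand. Each ODE is engineered so that its left-hand side cancels the bare quadratic, linear, and constant contributions: $\dot{\mathbf{P}} + \mathbf{P}\mathbf{A} + \mathbf{A}^T\mathbf{P} + \mathbf{Q}$ is replaced by $\mathbf{P}\widehat{\mathbf{B}}(\mathbf{R}+2\mathcal{M}_2(\mathbf{P}))^{-1}\widehat{\mathbf{B}}^T\mathbf{P}$, and analogously for \eqref{ODE S} and \eqref{ODE r}. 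Using also $U^T\mathbf{R}U + 2U^T\mathcal{M}_2(\mathbf{P})U = U^T(\mathbf{R}+2\mathcal{M}_2(\mathbf{P}))U$, the surviving terms assemble into the perfect square $(U - U^o)^T(\mathbf{R}+2\mathcal{M}_2(\mathbf{P}))(U - U^o)$ with $U^o$ the feedback \eqref{U 2}, giving
\begin{align}
J_{\rm soc}^{(N)}(U) = V(0, X(0)) + \mathbb{E}\int_0^T (U - U^o)^T\big(\mathbf{R} + 2\mathcal{M}_2(\mathbf{P})\big)(U - U^o)\,dt. \notag
\end{align}
Since $\mathbf{R} + 2\mathcal{M}_2(\mathbf{P}(t)) > 0$ on $[0,T]$ by the standing condition in \eqref{ODE P}, the integral is nonnegative and vanishes exactly at $U = U^o$, proving that $U^o$ is the social optimal control and that the optimal cost equals $V(0, X(0))$.

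I expect the main obstacle to be the bookkeeping needed to match the It\^o correction term with the operators $\mathcal{M}_0, \mathcal{M}_1, \mathcal{M}_2$ and to confirm that the three ODE substitutions leave exactly a perfect square with no residual cross terms; this is routine in principle but is where errors tend to arise because of the Kronecker-structured matrices $\widehat{\mathbf{B}}$, $\mathbf{e}_i$, and $\widehat{\mathbf{I}}$. A secondary technical point is justifying that the stochastic integral is a genuine martingale with zero expectation, which follows from uniform second-moment bounds on $X$ over $[0,T]$ under admissible $U$ via standard linear-SDE estimates. Finally, I would emphasize that the positive-definiteness condition carried along the solution of \eqref{ODE P} is not a formality but is exactly what turns the stationary point of the Hamiltonian into a true minimum in spite of the indefinite weights $\mathbf{Q}$, $\mathbf{R}$, and $\mathbf{Q}_f$.
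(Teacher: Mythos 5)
Your proposal is correct and follows the same route the paper takes: the paper's proof of Theorem \ref{thm: U} simply invokes \cite[Theorem 6.6.1]{YZ1999} and \cite[Corollary 3.2]{RMZ2001} together with Remark \ref{rmk: sol P S r}, and those cited results are themselves established by exactly the completion-of-squares argument you carry out (indeed the paper announces this technique in the sentence preceding the theorem). Your identification of the It\^o correction with $2U^T\mathcal{M}_2(\mathbf{P})U + 2\mathcal{M}_1(\mathbf{P})U + 2\mathcal{M}_0(\mathbf{P})$ and the role of the positive-definiteness condition $\mathbf{R}+2\mathcal{M}_2(\mathbf{P})>0$ under indefinite weights are both accurate; the only difference is that you make explicit what the paper delegates to the references.
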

\begin{proof}
The theorem follows from \cite[Theorem 6.6.1]{YZ1999}, \cite[Corollary 3.2]{RMZ2001} and Remark~\ref{rmk: sol P S r}.
\end{proof}

\section{Asymptotic solvability}
\label{sec: AS}

By Theorem~\ref{thm: U}, the Riccati ODE \eqref{ODE P} plays a central role in the study of the social optimization problem \eqref{X_i}--\eqref{J soc}. For this reason we start by analyzing
\eqref{ODE P}.

\begin{definition}
\label{def: AS}
The social optimization problem \eqref{X_i}--\eqref{J soc} has asymptotic
solvability (by feedback control) if there exists $N_0>0$ such that for all $N\geq N_0$, \eqref{ODE P} has a solution $\mathbf{P}$ on $[0, T]$ and
\begin{align}
& \sup_{N\geq N_0} \sup_{0\leq t \leq T}  \left\| \mathbf{P}(t) \right\|_{l_1}/N < \infty ,
\label{AS 2} \\
&\mathbf{R} + 2 \mathcal{M}_2 (\mathbf{P}(t))   \geq c_0 I, \quad \forall
N \geq N_0, \ \forall t \in [0, T] ,
\label{AS 3}
\end{align}
for some fixed constant $c_0 >0$.
\end{definition}

We give a heuristic argument for  making a correct guess of the factor $1/N$ required in \eqref{AS 2}. Consider the case $t=0$ with no noise. Let ${\mathcal A}_1$ be assigned the initial condition $X_1(0)=cx_0$, where $x_0\in \mathbb{R}^n$ is a unit vector and $c$ is a large constant. All other agents take zero initial states. By checking the $N$ individual costs, we have a rough upper bound  $O(c^2)$ for the optimal social cost, uniformly with respect to $N$. Recalling  \eqref{V ansatz},
for large $c$, the optimal cost is  $X^T(0){\mathbf P}(0)X(0)=c^2 x_0^T
P_{11}(0) x_0$, where the submatrix $P_{11}(0)$ is determined by the first $n$ rows and the first $n$ columns in $\mathbf{P}(0)$. Hence, we expect
to have
 $\|P_{11}(0)\|_{l_1}=O(1)$. The other $N-1$ diagonal submatrices in $\mathbf{P}(0)$  have the same bound by symmetry. The off-diagonal submatrices of dimension $n\times n$ are expected to have much smaller norm due to weak coupling. This suggests $\|{\mathbf P}(0)\|_{l_1}=O(N)$.

The test of asymptotic solvability by directly checking the sequence of $\mathbf{P}$ matrices is unfeasible due to the high nonlinearity and increasing dimensions of \eqref{ODE P}.
A central question is whether we can determine asymptotic solvability by some simple criterion.

\subsection{Main result}
\label{sec:sub:as}

For $\Lambda_1\in  {\mathcal S}^n$ and $\Lambda_2\in {\mathcal S}^n$,
define the mappings
\begin{align}
&{\mathcal R}_1(\Lambda_1)= R + B_1^T \Lambda_1 B_1, \\
& {\mathcal R}_2(\Lambda_1,\Lambda_2)=  R + B_1^T \Lambda_1 B_1 + B_0^T(\Lambda_1 + \Lambda_2 ) B_0.
\end{align}
Then ${\mathcal R}_1$ is from ${\mathcal S}^n $ to ${\mathcal S}^n$, and
 $ {\mathcal R}_2$ is from $ {\mathcal S}^n\times {\mathcal S}^n$   to
  ${\mathcal S}^n$.

Define the  ${\mathcal S}^n$-valued matrix functions
\begin{align}
 &\Psi_1(\Lambda_1) \coloneqq   \Lambda_1 B
( \mathcal{R}_1(\Lambda_1) )^{-1} B^T \Lambda_1 - \Lambda_1 A - A^T \Lambda_1 - Q ,
\label{psi_1}  \\
\label{psi_2}
& \Psi_2(\Lambda_1, \Lambda_2)
 \coloneqq  (\Lambda_1 + \Lambda_2 ) B
 ( \mathcal{R}_2(\Lambda_1, \Lambda_2)  )^{-1}
 B^T ( \Lambda_1 + \Lambda_2 )     \\
&  \qquad\qquad\qquad  - \Lambda_1 B ( \mathcal{R}_1(\Lambda_1) )^{-1} B^T \Lambda_1
 - \left[ \Lambda_1 G + \Lambda_2 (A + G) \right]
 \notag \\
&  \qquad\qquad\qquad  - \left[ G^T \Lambda_1 + (A^T + G^T) \Lambda_2 \right]
 - Q^\Gamma   ,  \notag
\end{align}
provided that each inverse matrix exists, where $\Lambda_1\in  {\mathcal S}^n$ and $\Lambda_2\in {\mathcal S}^n$.
The matrix $Q^\Gamma$ is specified in \eqref{QGamma}.

Denote the following ODE system
\begin{align}
& \begin{cases}
\dot \Lambda_1 (t) = \Psi_1(\Lambda_1(t)) , \\
\Lambda_1(T) = Q_f , \
 \mathcal{R}_1(\Lambda_1(t)) > 0 , \ \forall t \in [0, T] ,   \\
\end{cases}
\label{ODELam1} \\
& \begin{cases}
 \dot \Lambda_2(t)   =  \Psi_2(\Lambda_1(t), \Lambda_2(t) ) , \\
 \Lambda_2(T)   =Q^\Gamma_f   ,    \
\mathcal{R}_2(\Lambda_1(t) , \Lambda_2(t))  > 0, \ \forall t \in [0, T] .
  \\
\end{cases} \label{ODELam2}
\end{align}

If \eqref{ODELam1}--\eqref{ODELam2} has a solution on $[0,T]$, the solution is unique by similar reasoning as in Remark \ref{remark:Punique},
and both $\Lambda_1(t)$ and $\Lambda_2(t)$ are ${\mathcal S}^n$-valued.
The following theorem characterizes asymptotic solvability of the social optimization problem \eqref{X_i}--\eqref{J soc} in terms of the ODE system
\eqref{ODELam1}--\eqref{ODELam2}, which is a key result of this paper.
\begin{theorem}
\label{thm: NSAS}The social optimization problem \eqref{X_i}--\eqref{J soc} has asymptotic solvability
if and only if the ODE system \eqref{ODELam1}--\eqref{ODELam2} has a solution $(\Lambda_1, \Lambda_2)$ on $[0, T]$.
\end{theorem}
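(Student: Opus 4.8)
The plan is to exploit the exchangeability of the $N$ agents to collapse the high-dimensional Riccati ODE \eqref{ODE P} to a pair of $n\times n$ matrix ODEs, and to identify that pair with \eqref{ODELam1}--\eqref{ODELam2}. Since the data $\mathbf{A}$, $\mathbf{Q}$, $\mathbf{Q}_f$, $\mathbf{R}$ are all invariant under simultaneous permutation of the agent blocks, and since \eqref{ODE P} has at most one solution (Remark~\ref{remark:Punique}), any solution $\mathbf{P}$ must itself be permutation invariant. This forces the block-symmetric form
\begin{align}
\mathbf{P}(t) = I_N \otimes \Pi_1(t) + (\mathbf{1}_{N\times N} - I_N)\otimes \Pi_2(t), \qquad \Pi_1,\Pi_2 \in \mathcal{S}^n, \notag
\end{align}
with all diagonal blocks equal and all off-diagonal blocks equal. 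Motivated by the terminal condition $\mathbf{Q}_f = I_N\otimes Q_f + \tfrac1N \mathbf{1}_{N\times N}\otimes Q^\Gamma_f$ and by the heuristic of Section~\ref{sec: AS}, I would rescale the off-diagonal block as $\Pi_2 = \Pi_{2,*}/N$ so that $\Pi_1$ and $\Pi_{2,*}$ are the $O(1)$ quantities, and set $\Lambda_1 = \Pi_1$, $\Lambda_2 = \Pi_{2,*}$. Under this rescaling $\|\mathbf{P}\|_{l_1} = O(N)$ precisely when $\Pi_1,\Pi_{2,*}$ stay bounded, which is exactly what \eqref{AS 2} encodes.

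The decisive algebraic step is to evaluate the nonlinear term. Writing $J = \tfrac1N\mathbf{1}_{N\times N}$ for the orthogonal projection with $J^2 = J$, and using $\widehat{\mathbf{B}} = I_N\otimes B$, a direct computation gives
\begin{align}
\mathbf{R} + 2\mathcal{M}_2(\mathbf{P}) = I_N\otimes \mathcal{R}_1(\Pi_1) + J\otimes\big(\mathcal{R}_2(\Pi_1,\Pi_{2,*}) - \mathcal{R}_1(\Pi_1)\big) + O(1/N), \notag
\end{align}
so that, by the spectral calculus of $J$ and $I_N - J$,
\begin{align}
\big(\mathbf{R}+2\mathcal{M}_2(\mathbf{P})\big)^{-1} = I_N\otimes \mathcal{R}_1(\Pi_1)^{-1} + J\otimes\big(\mathcal{R}_2(\Pi_1,\Pi_{2,*})^{-1} - \mathcal{R}_1(\Pi_1)^{-1}\big) + O(1/N). \notag
\end{align}
In particular the positivity requirement in \eqref{ODE P} is equivalent, on the two orthogonal subspaces, to $\mathcal{R}_1(\Pi_1) > 0$ and $\mathcal{R}_2(\Pi_1,\Pi_{2,*}) > 0$, matching the constraints in \eqref{ODELam1}--\eqref{ODELam2}. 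Because every factor appearing in \eqref{ODE P} ($\mathbf{P}$, $\widehat{\mathbf{B}}$, $\mathbf{A}$, $\mathbf{Q}$, and the inverse above) shares the $I_N\otimes(\cdot) + \mathbf{1}_{N\times N}\otimes(\cdot)$ structure, which is closed under products thanks to $(\mathbf{1}_{N\times N})^2 = N\mathbf{1}_{N\times N}$, the substitution closes: \eqref{ODE P} is equivalent to a coupled system for $(\Pi_1,\Pi_{2,*})$ whose diagonal-block equation is $\dot\Pi_1 = \Psi_1(\Pi_1)$ and whose rescaled off-diagonal equation is $\dot\Pi_{2,*} = \Psi_2(\Pi_1,\Pi_{2,*})$, each perturbed by an explicitly bounded $O(1/N)$ term $F_N$. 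I expect carrying out this Kronecker-product bookkeeping, and controlling the $O(1/N)$ remainders uniformly on the region where the two weights stay positive, to be the most laborious part.

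With the reduction in hand the equivalence follows from a perturbation argument for the resulting backward ODEs $\dot{(\Pi_1,\Pi_{2,*})} = (\Psi_1,\Psi_2) + F_N$. For the ``if'' direction, assume $(\Lambda_1,\Lambda_2)$ solves \eqref{ODELam1}--\eqref{ODELam2} on $[0,T]$; then $\mathcal{R}_1,\mathcal{R}_2 \geq c_0 I$ for some $c_0>0$ on the compact trajectory, and the vector field $(\Psi_1,\Psi_2)$ is locally Lipschitz there. Continuous dependence on the parameter $1/N$ yields, for $N$ large, a solution $(\Pi_1^{(N)},\Pi_{2,*}^{(N)})$ of the perturbed system on all of $[0,T]$ lying in a fixed neighborhood of $(\Lambda_1,\Lambda_2)$ and keeping the weights positive. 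Reassembling these blocks produces a genuine solution $\mathbf{P}$ of \eqref{ODE P} by the structure-closedness above; it is the unique solution (Remark~\ref{remark:Punique}), it satisfies $\|\mathbf{P}\|_{l_1}/N = O(1)$ by boundedness of the blocks, and it satisfies \eqref{AS 3} by the translated positivity, so asymptotic solvability holds. For the ``only if'' direction, assume asymptotic solvability. By symmetry and uniqueness each $\mathbf{P}$, $N\geq N_0$, takes the block form, so $(\Pi_1^{(N)},\Pi_{2,*}^{(N)})$ solves the perturbed system on $[0,T]$; the bound \eqref{AS 2} makes these blocks uniformly bounded and \eqref{AS 3} translates into $\mathcal{R}_1,\mathcal{R}_2 \geq c_0 I$ uniformly in $N$ and $t$. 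On this bounded, uniformly-positive region the right-hand sides are uniformly bounded, so the family is equi-Lipschitz in $t$; Arzel\`a--Ascoli extracts a uniformly convergent subsequence whose limit $(\Lambda_1,\Lambda_2)$, by letting $N\to\infty$ in the integral form and using $F_N \to 0$, solves $(\Psi_1,\Psi_2)$ on $[0,T]$ with $\mathcal{R}_1,\mathcal{R}_2 \geq c_0 I > 0$, hence is the solution of \eqref{ODELam1}--\eqref{ODELam2}. The genuine difficulty here, absent in the additive-noise models of \cite{HZ2018a,HZ2020}, is that the matrix inverse makes the vector field only rationally (not polynomially) dependent on the unknowns, so I must keep the limiting trajectory away from the locus where $\mathcal{R}_1$ or $\mathcal{R}_2$ degenerates and rule out finite-time escape; the uniform lower bound $c_0 I$ from \eqref{AS 3} together with the uniform $l_1$ bound is exactly what guarantees both.
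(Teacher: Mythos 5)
Your proposal is correct and follows essentially the same route as the paper: permutation invariance plus uniqueness to force the two-block form of $\mathbf{P}$, the rescaling $\Lambda_2^N = N\Pi_2^N$, reduction to a low-dimensional system perturbed by explicit $O(1/N)$ terms, Arzel\`a--Ascoli for necessity, and a tube/continuous-dependence argument (keeping $\mathcal{R}_1,\mathcal{R}_2$ uniformly positive to avoid the singular locus of the rational vector field) for sufficiency. The only difference is cosmetic: you organize the block inverse via the projection $J=\tfrac1N\mathbf{1}_{N\times N}$ and its spectral calculus, whereas the paper computes the characteristic polynomial and solves for the submatrices $E^N,H^N$ directly — and note that the exact positivity condition inherited from \eqref{ODE P} is $\mathcal{R}_2 - \tfrac1N B_0^T\Lambda_2^N B_0>0$ rather than $\mathcal{R}_2>0$, a discrepancy your $O(1/N)$ bookkeeping and the uniform bounds absorb just as the paper's Lemma~\ref{lemma:AStrue} does.
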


The rest of this subsection is devoted to proving Theorem~\ref{thm: NSAS}.

\begin{lemma}
\label{lm: P submatrices}
Suppose that \eqref{ODE P} has a solution $\mathbf{P}$ on $[0, T]$. Then
$\mathbf{P}$ has the representation
\begin{align}
\mathbf{P} = \begin{bmatrix} \Pi_1^N & \Pi_2^N &  \cdots & \Pi_2^N \\
\Pi_2^N & \Pi_1^N &  \cdots & \Pi_2^N \\
\vdots & \vdots & \ddots & \vdots \\
\Pi_2^N & \Pi_2^N & \cdots & \Pi_1^N  \end{bmatrix} ,
\label{P submatrices}
\end{align}
where both $\Pi_1^N(t)$ and $\Pi_2^N(t)$ are $n\times n$ symmetric matrix
functions of $t \in [0,T]$.
\end{lemma}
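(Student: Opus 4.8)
The plan is to prove the claimed block structure by exploiting the exchangeability (permutation symmetry) of the $N$ agents together with the uniqueness of the Riccati solution recorded in Remark~\ref{remark:Punique}. Concretely, I would show that the coefficient data of \eqref{ODE P} are invariant under any relabeling of the agents, so that the unique solution $\mathbf{P}$ must inherit this invariance; permutation invariance of an $N\times N$ block matrix is exactly the statement that all diagonal blocks agree and all off-diagonal blocks agree.

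First I would introduce, for each permutation $\sigma$ of $\{1,\dots,N\}$, the $N\times N$ permutation matrix $\Pi_\sigma$ with $\Pi_\sigma e^N_j = e^N_{\sigma(j)}$, and the two orthogonal block-permutation matrices $T_\sigma = \Pi_\sigma \otimes I_n \in \mathbb{R}^{Nn\times Nn}$ and $S_\sigma = \Pi_\sigma \otimes I_{n_1}\in \mathbb{R}^{Nn_1\times Nn_1}$. Using the mixed-product rule for $\otimes$ and the identity $\Pi_\sigma \mathbf{1}_{N\times N}\Pi_\sigma^T = \mathbf{1}_{N\times N}$, I would check that the data are invariant: $T_\sigma \mathbf{A} T_\sigma^T = \mathbf{A}$, $T_\sigma \mathbf{Q} T_\sigma^T = \mathbf{Q}$, $T_\sigma \mathbf{Q}_f T_\sigma^T = \mathbf{Q}_f$, $S_\sigma \mathbf{R} S_\sigma^T = \mathbf{R}$, and $T_\sigma \widehat{\mathbf{B}} S_\sigma^T = \widehat{\mathbf{B}}$ (here the diagonal parts $I_N\otimes(\cdot)$ commute with the conjugation and the rank-one parts $\mathbf{1}_{N\times N}\otimes(\cdot)$ are fixed by it). The one genuinely new verification, and the step I expect to be the main obstacle, is the equivariance of the quadratic operator $\mathcal{M}_2$: writing $Z$ in $n\times n$ blocks $Z_{jk}$, a direct computation gives $\mathcal{M}_2(Z) = \tfrac12 \diag[B_1^T Z_{11}B_1,\dots,B_1^T Z_{NN}B_1] + \tfrac{1}{2N^2}\,\mathbf{1}_{N\times N}\otimes\big(B_0^T(\sum_{j,k}Z_{jk})B_0\big)$, from which $\mathcal{M}_2(T_\sigma Z T_\sigma^T) = S_\sigma \mathcal{M}_2(Z) S_\sigma^T$; the first (diagonal) term is merely relabeled by $\sigma$, while the second term is fixed because $\sum_{j,k}Z_{jk}$ is permutation invariant. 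Care is needed to track the two distinct block sizes $n$ and $n_1$, i.e.\ to pair $T_\sigma$ on the state side with $S_\sigma$ on the control side throughout.

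With these invariances in hand, I would set $\widetilde{\mathbf{P}}(t) = T_\sigma \mathbf{P}(t) T_\sigma^T$ and verify, by inserting $I = T_\sigma^T T_\sigma$ and $I = S_\sigma^T S_\sigma$ between factors, that $\widetilde{\mathbf{P}}$ satisfies \eqref{ODE P} with the same terminal value $\mathbf{Q}_f$ and the same definiteness constraint (conjugation by the orthogonal $S_\sigma$ preserves $\mathbf{R}+2\mathcal{M}_2(\cdot)>0$). In particular the quadratic term transforms as $T_\sigma\,\mathbf{P}\widehat{\mathbf{B}}(\mathbf{R}+2\mathcal{M}_2(\mathbf{P}))^{-1}\widehat{\mathbf{B}}^T\mathbf{P}\,T_\sigma^T = \widetilde{\mathbf{P}}\widehat{\mathbf{B}}(\mathbf{R}+2\mathcal{M}_2(\widetilde{\mathbf{P}}))^{-1}\widehat{\mathbf{B}}^T\widetilde{\mathbf{P}}$, using $S_\sigma(\mathbf{R}+2\mathcal{M}_2(\mathbf{P}))^{-1}S_\sigma^T = (\mathbf{R}+2\mathcal{M}_2(\widetilde{\mathbf{P}}))^{-1}$. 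By the uniqueness in Remark~\ref{remark:Punique}, $\widetilde{\mathbf{P}} = \mathbf{P}$, i.e.\ $T_\sigma \mathbf{P} T_\sigma^T = \mathbf{P}$ for every $\sigma$.

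Finally I would translate this invariance into the block form. The identity $T_\sigma \mathbf{P} T_\sigma^T = \mathbf{P}$ reads $P_{\sigma^{-1}(k)\sigma^{-1}(l)} = P_{kl}$ for all $k,l$, so the block $P_{kl}$ depends only on whether $k=l$ or $k\neq l$: choosing $\sigma$ to map any diagonal index to any other shows all diagonal blocks coincide, call it $\Pi_1^N$, and (for $N\ge 2$) choosing $\sigma$ to map any ordered off-diagonal pair to any other shows all off-diagonal blocks coincide, call it $\Pi_2^N$. The symmetry $\mathbf{P}=\mathbf{P}^T$ then gives $\Pi_1^N = (\Pi_1^N)^T$ directly, and $(\Pi_2^N)^T = (P_{kl})^T = P_{lk} = \Pi_2^N$ (the last equality since both off-diagonal blocks equal $\Pi_2^N$), so both $\Pi_1^N$ and $\Pi_2^N$ are $n\times n$ symmetric, which is \eqref{P submatrices}.
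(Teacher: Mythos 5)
Your proposal is correct and follows essentially the same route as the paper: the paper conjugates \eqref{ODE P} by the block-transposition matrices $J_{ij}$ (the special case of your $T_\sigma$ for $\sigma$ a transposition), verifies the coefficient data and the map $Z\mapsto \mathbf{B}_k\mathbf{e}_k$-structure are invariant, and invokes uniqueness (Remark~\ref{remark:Punique}) to conclude $J_{ij}^T\mathbf{P}J_{ij}=\mathbf{P}$, exactly your symmetry-plus-uniqueness argument. Your version is slightly more explicit about the $\mathcal{M}_2$-equivariance and about pairing the state-side conjugation $T_\sigma$ with the control-side $S_\sigma$, but the substance is the same.
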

\begin{proof}
See Appendix~\ref{appendix: pf P submatrices}.
\end{proof}

\begin{lemma}
\label{lm: S submatrices}
Suppose that \eqref{ODE P} has a  solution $\mathbf{P}(t)$ on $[0, T]$. Then
 \eqref{ODE S} has a unique solution $\mathbf{S}$ on $[0, T]$ with  the representation
\begin{align}
\mathbf{S}(t) = (S^{NT}(t), \cdots, S^{NT}(t))^T \in \mathbb{R}^{Nn \times
1}, \quad S^N(t) \in \mathbb{R}^{n \times 1} .
\label{S submatrices}
\end{align}
\end{lemma}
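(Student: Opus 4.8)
The plan is to combine the uniqueness already granted by Remark~\ref{rmk: sol P S r} with the permutation symmetry of the $N$-agent model. Since $\mathbf{P}$ is given on $[0,T]$ and satisfies $\mathbf{R}+2\mathcal{M}_2(\mathbf{P}(t))>0$, equation \eqref{ODE S} is a linear ODE in $\mathbf{S}$ with continuous coefficients, so it has a unique solution on $[0,T]$; this is exactly Remark~\ref{rmk: sol P S r}. It therefore remains to establish the block structure \eqref{S submatrices}, i.e.\ that all $N$ blocks of $\mathbf{S}(t)$ coincide. I would prove this by showing that $\mathbf{S}$ is invariant under arbitrary relabelings of the agents and then invoking uniqueness.

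For a permutation $\sigma$ of $\{1,\dots,N\}$ with permutation matrix $P_\sigma\in\mathbb{R}^{N\times N}$, set $\mathbf{T}_\sigma \coloneqq P_\sigma\otimes I_n$ and $\mathbf{T}^u_\sigma \coloneqq P_\sigma\otimes I_{n_1}$, both orthogonal. The first step is to record the equivariance of every coefficient appearing in \eqref{ODE S}. From $\mathbf{A}=I_N\otimes A+\mathbf{1}_{N\times N}\otimes(G/N)$ and $\widehat{\mathbf{B}}=I_N\otimes B$ one gets directly $\mathbf{T}_\sigma\mathbf{A}=\mathbf{A}\mathbf{T}_\sigma$ and $\mathbf{T}_\sigma\widehat{\mathbf{B}}=\widehat{\mathbf{B}}\mathbf{T}^u_\sigma$, using $P_\sigma\mathbf{1}_{N\times N}P_\sigma^T=\mathbf{1}_{N\times N}$; similarly $\mathbf{T}^u_\sigma\mathbf{R}=\mathbf{R}\mathbf{T}^u_\sigma$. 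By the exchangeable representation of Lemma~\ref{lm: P submatrices} we have $\mathbf{T}_\sigma\mathbf{P}\mathbf{T}_\sigma^T=\mathbf{P}$. The only terms requiring a short computation are $\mathcal{M}_1(\mathbf{P})$ and $\mathcal{M}_2(\mathbf{P})$: substituting the block form \eqref{P submatrices} into their definitions yields
\begin{align*}
\mathcal{M}_2(\mathbf{P}) &= \tfrac12\, I_N\otimes(B_1^T\Pi_1^N B_1) + \tfrac12\,\mathbf{1}_{N\times N}\otimes\big[\tfrac1N B_0^T(\Pi_1^N+(N-1)\Pi_2^N)B_0\big],\\
\mathcal{M}_1(\mathbf{P}) &= \mathbf{1}_{1\times N}\otimes\big[D^T\Pi_1^N B_1 + D_0^T(\Pi_1^N+(N-1)\Pi_2^N)B_0\big],
\end{align*}
which are manifestly invariant under relabeling, giving $\mathbf{T}^u_\sigma\mathcal{M}_2(\mathbf{P})(\mathbf{T}^u_\sigma)^T=\mathcal{M}_2(\mathbf{P})$ and $\mathbf{T}^u_\sigma\mathcal{M}_1^T(\mathbf{P})=\mathcal{M}_1^T(\mathbf{P})$, the latter because $P_\sigma\mathbf{1}_{N\times 1}=\mathbf{1}_{N\times 1}$.

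With these identities in hand, set $\widetilde{\mathbf{S}}(t)\coloneqq\mathbf{T}_\sigma\mathbf{S}(t)$. Pushing $\mathbf{T}_\sigma$ through the right-hand side of \eqref{ODE S}, the relations $\mathbf{T}_\sigma\mathbf{P}=\mathbf{P}\mathbf{T}_\sigma$, $\mathbf{T}_\sigma\widehat{\mathbf{B}}=\widehat{\mathbf{B}}\mathbf{T}^u_\sigma$, $\mathbf{T}^u_\sigma(\mathbf{R}+2\mathcal{M}_2(\mathbf{P}))^{-1}=(\mathbf{R}+2\mathcal{M}_2(\mathbf{P}))^{-1}\mathbf{T}^u_\sigma$, $\mathbf{T}^u_\sigma\widehat{\mathbf{B}}^T=\widehat{\mathbf{B}}^T\mathbf{T}_\sigma$, $\mathbf{T}^u_\sigma\mathcal{M}_1^T(\mathbf{P})=\mathcal{M}_1^T(\mathbf{P})$ and $\mathbf{T}_\sigma\mathbf{A}^T=\mathbf{A}^T\mathbf{T}_\sigma$ combine to show that $\widetilde{\mathbf{S}}$ satisfies the same ODE \eqref{ODE S}, with the same terminal value $\widetilde{\mathbf{S}}(T)=\mathbf{T}_\sigma\cdot 0=0$. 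By the uniqueness already noted, $\mathbf{T}_\sigma\mathbf{S}(t)=\mathbf{S}(t)$ for every $t\in[0,T]$ and every permutation $\sigma$. Writing $\mathbf{S}=(S_1^T,\dots,S_N^T)^T$ in $n$-blocks, invariance under all permutations of the blocks forces $S_1=\dots=S_N\eqqcolon S^N$, which is precisely \eqref{S submatrices}.

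The only genuine work is the verification of the two equivariance identities for $\mathcal{M}_1(\mathbf{P})$ and $\mathcal{M}_2(\mathbf{P})$, since these operators carry the common-noise coupling through $\mathbf{B}_0$, $\mathbf{D}_0$ and $\widehat{\mathbf{I}}$; once Lemma~\ref{lm: P submatrices} is invoked their block forms are symmetric under agent relabeling and the invariance is immediate. An alternative, equally short route avoids symmetry altogether: substitute the ansatz $\mathbf{S}(t)=\mathbf{1}_{N\times 1}\otimes S^N(t)$ directly into \eqref{ODE S}, use the block forms above together with $\mathbf{1}_{N\times N}\mathbf{1}_{N\times 1}=N\mathbf{1}_{N\times 1}$ to collapse every term into the form $\mathbf{1}_{N\times 1}\otimes(\,\cdot\,)$, obtain a self-consistent linear ODE for $S^N$ with terminal value $0$, and conclude again by the uniqueness in Remark~\ref{rmk: sol P S r}. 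I would present the symmetry argument as the main one, since it needs no explicit reduced ODE, while noting that the direct ansatz has the side benefit of producing the scalar-block equation for $S^N$ that is likely needed later.
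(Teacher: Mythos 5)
Your proof is correct and follows essentially the same route as the paper: uniqueness from linearity of \eqref{ODE S}, then permutation equivariance of the coefficients plus uniqueness forces $\mathbf{T}_\sigma\mathbf{S}=\mathbf{S}$, hence equal blocks. The paper uses only the transpositions $J_{ij}=\mathbf{T}_{(ij)}$ (which generate all permutations, so the conclusions coincide); your explicit block formulas for $\mathcal{M}_1(\mathbf{P})$ and $\mathcal{M}_2(\mathbf{P})$ are a harmless addition that the paper leaves implicit.
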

\begin{proof}
See Appendix~\ref{appendix: pf S submatrices}.
\end{proof}

Intuitively, if we fix $x_i=x$ for all $i$, the value function $V(t, \mathbf{x})$ is expected to be of magnitude $O(N)$. On the other hand, \eqref{V ansatz} and \eqref{P submatrices} together give
\begin{align}
V(t, \mathbf{x}) = N x^T \Pi_1^N(t) x + (N^2-N) x^T \Pi_2^N(t) x  + 2N x^T S^N(t)
 + {\bf r}(t) = O(N) . \notag
\end{align}
This suggests we should have  $|\Pi_1^N(t)| = O(1)$, $|\Pi_2^N(t)| = O(1/N)$, and $|S^N(t)|=O(1)$ for any given $ t \in [0, T]$.
Based on the above heuristic reasoning on the  magnitude of
$|\Pi_1^N|$ and $|\Pi_2^N|$, we follow the rescaling method in
\cite{HZ2018a,HZ2020,MH2020} to define
\begin{align}
\Lambda_1^N(t) \coloneqq \Pi_1^N(t),
\quad \Lambda_2^N(t) \coloneqq N \Pi_2^N(t). \label{Lam12}
\end{align}
Then in view of Lemma \ref{lm: P submatrices}, $\mathbf{R} + 2 \mathcal{M}_2(\mathbf{P})$ may be denoted in  the form
\begin{align}
\mathbf{R} + 2 \mathcal{M}_2(\mathbf{P})
 =  \begin{bmatrix} F^N & K^N  & \cdots &  K^N  \\
 K^N  & F^N & \cdots &  K^N \\
\vdots & \vdots & \ddots & \vdots \\
 K^N & K^N & \cdots & F^N
  \end{bmatrix} ,
\label{R+2M_2(P)}
 \end{align}
where
\begin{align}
& K^N(t) = (1/N) {B_0^T}[  \Lambda_1^N (t)+ (1-1/N) \Lambda_2^N(t) ]{B_0} , \nonumber\\
&  F^N(t) =  K^N(t)+  \mathcal{R}_1(\Lambda_1^N(t))  . \notag
\end{align}

\begin{lemma}\label{lemma:chpoly}
Suppose $\mathbf{P}$ has a solution on $[0,T]$. Then given $t\in [0,T]$,
$\lambda$ is an eigenvalue of $ \mathbf{R} + 2 \mathcal{M}_2(\mathbf{P})$
if and only if either $\det[\lambda I - F^N - (N-1)K^N ] =0$ or
$\det[\lambda I - F^N +K^N ]=0$; moreover,
 for each $t$,
\begin{align}
 & \mathcal{R}_1(\Lambda_1^N(t))>0, \label{RL1ge0} \\
 &  \mathcal{R}_2(\Lambda_1^N(t), \Lambda_2^N(t)) - (1/N)B_0^T \Lambda_2^N(t) B_0 > 0. \label{RL12ge0}
\end{align}
\end{lemma}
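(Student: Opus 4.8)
The plan is to exploit the highly symmetric block structure of $\mathbf{R}+2\mathcal{M}_2(\mathbf{P})$ recorded in \eqref{R+2M_2(P)} and reduce the eigenvalue problem to two small $n_1\times n_1$ blocks by a Kronecker-product diagonalization. First I would rewrite the matrix in \eqref{R+2M_2(P)} as
\begin{align}
\mathbf{R}+2\mathcal{M}_2(\mathbf{P}) = I_N\otimes (F^N-K^N) + \mathbf{1}_{N\times N}\otimes K^N, \notag
\end{align}
which is immediate since every diagonal block equals $(F^N-K^N)+K^N=F^N$ and every off-diagonal block equals $K^N$. I would also note that $F^N$ and $K^N$ are symmetric (because $\Lambda_1^N,\Lambda_2^N\in\mathcal{S}^n$ and $R$ is symmetric), so that positive-definiteness of the blocks below is meaningful.

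The key step is a simultaneous diagonalization. Since $\mathbf{1}_{N\times N}$ and $I_N$ commute, they are simultaneously diagonalized by an orthogonal matrix $\Phi$ whose first column is $\mathbf{1}_{N\times 1}/\sqrt{N}$; then $\Phi^T\mathbf{1}_{N\times N}\Phi=\diag[N,0,\ldots,0]$ and $\Phi^TI_N\Phi=I_N$. Conjugating by $\Phi\otimes I_{n_1}$ gives the block-diagonal form
\begin{align}
(\Phi\otimes I_{n_1})^T[\mathbf{R}+2\mathcal{M}_2(\mathbf{P})](\Phi\otimes I_{n_1}) = \diag\big[\, F^N+(N-1)K^N,\ \underbrace{F^N-K^N,\ \ldots,\ F^N-K^N}_{N-1}\,\big]. \notag
\end{align}
Because orthogonal conjugation preserves the spectrum, $\lambda$ is an eigenvalue of $\mathbf{R}+2\mathcal{M}_2(\mathbf{P})$ if and only if it is an eigenvalue of $F^N+(N-1)K^N$ or of $F^N-K^N$, which is exactly the claimed factorization $\det[\lambda I-F^N-(N-1)K^N]=0$ or $\det[\lambda I-F^N+K^N]=0$.

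For the positive-definiteness assertions I would invoke the constraint $\mathbf{R}+2\mathcal{M}_2(\mathbf{P}(t))>0$ built into the Riccati ODE \eqref{ODE P}. Since orthogonal conjugation preserves positive-definiteness, both diagonal blocks $F^N-K^N$ and $F^N+(N-1)K^N$ are positive-definite for each $t$. It then remains only to identify these blocks with the quantities in \eqref{RL1ge0}--\eqref{RL12ge0}. From $F^N=K^N+\mathcal{R}_1(\Lambda_1^N)$ one has $F^N-K^N=\mathcal{R}_1(\Lambda_1^N)$, giving \eqref{RL1ge0}. Using $NK^N=B_0^T[\Lambda_1^N+(1-1/N)\Lambda_2^N]B_0$ and expanding,
\begin{align}
F^N+(N-1)K^N &= \mathcal{R}_1(\Lambda_1^N)+NK^N \notag\\
&= R+B_1^T\Lambda_1^N B_1+B_0^T(\Lambda_1^N+\Lambda_2^N)B_0 - \tfrac{1}{N}B_0^T\Lambda_2^N B_0 \notag\\
&= \mathcal{R}_2(\Lambda_1^N,\Lambda_2^N)-\tfrac{1}{N}B_0^T\Lambda_2^N B_0, \notag
\end{align}
which yields \eqref{RL12ge0}.

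There is no serious analytic obstacle here: the argument is purely linear-algebraic and rests on the structure established in Lemma~\ref{lm: P submatrices}. The only points requiring genuine care are the bookkeeping of eigenvalue multiplicities in the Kronecker diagonalization (one copy of $F^N+(N-1)K^N$ against $N-1$ copies of $F^N-K^N$) and the verification of the algebraic identity rewriting $F^N+(N-1)K^N$ through $\mathcal{R}_2$.
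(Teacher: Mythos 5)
Your proof is correct and follows essentially the same route as the paper: both reduce the spectrum of $\mathbf{R}+2\mathcal{M}_2(\mathbf{P})$ to the two blocks $F^N+(N-1)K^N$ and $F^N-K^N$ (the paper by directly computing the characteristic polynomial, you by the equivalent orthogonal block-diagonalization of $I_N\otimes(F^N-K^N)+\mathbf{1}_{N\times N}\otimes K^N$), and then identify these blocks with $\mathcal{R}_2(\Lambda_1^N,\Lambda_2^N)-(1/N)B_0^T\Lambda_2^NB_0$ and $\mathcal{R}_1(\Lambda_1^N)$ exactly as in \eqref{enform}--\eqref{hnform}. Your algebraic verification of $F^N+(N-1)K^N=\mathcal{R}_2(\Lambda_1^N,\Lambda_2^N)-\tfrac{1}{N}B_0^T\Lambda_2^NB_0$ and the multiplicity bookkeeping are both accurate.
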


\begin{proof}
The lemma follows from direct calculation of the characteristic polynomial
\begin{align}
&\det[ \lambda I - (\mathbf{R} + 2 \mathcal{M}_2 (\mathbf{P}) ) ] \nonumber \\
 =&  \det
 \begin{bmatrix} \lambda I - F^N & - K^N  &   \cdots & - K^N  \\
 - K^N  & \lambda I - F^N &  \cdots & - K^N \\
\vdots & \vdots & \ddots & \vdots  \\
 - K^N & - K^N   & \cdots & \lambda I - F^N
  \end{bmatrix} \nonumber  \\
 =& \det[\lambda I - F^N - (N-1)K^N ] \cdot(\det[\lambda I - F^N +K^N ])^{N-1}.  \notag
\end{align}
Note that both $F^N$ and $K^N$ are ${\mathcal S}^n$-valued.
The positive definiteness property in \eqref{RL1ge0}--\eqref{RL12ge0} follows from $\mathbf{R} + 2 \mathcal{M}_2(\mathbf{P})>0$  and $F^N-K^N> 0$,
 $F^N+(N-1) K^N > 0$.
\end{proof}

Since $\mathbf{R} + 2 \mathcal{M}_2(\mathbf{P})$ is symmetric, the inverse matrix
$(\mathbf{R} + 2 \mathcal{M}_2(\mathbf{P}))^{-1}$ also takes the following symmetric form
\begin{align}
(\mathbf{R} + 2 \mathcal{M}_2(\mathbf{P}) )^{-1}
=  \begin{bmatrix} H^N & E^N  & \cdots &  E^N  \\
 E^{N T}  & H^N &  \cdots &  E^N \\
\vdots & \vdots &  \ddots & \vdots \\
   E^{N T} &  E^{N T} &  \cdots & H^N
  \end{bmatrix} ,
\label{inv R+2M_2(P)}
\end{align}
where $E^N(t)$ and $H^N(t)$ are $n_1 \times n_1$ submatrices.
\begin{lemma}\label{lemma:EN}
The submatrix $E^N$ in \eqref{inv R+2M_2(P)} satisfies $E^N (t)= E^{N T}(t)$.
\end{lemma}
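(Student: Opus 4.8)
The plan is to exploit the permutation structure of $M:=\mathbf{R}+2\mathcal{M}_2(\mathbf{P})$. By \eqref{R+2M_2(P)}, $M$ is a symmetric block matrix whose $N$ diagonal blocks all equal $F^N$ and whose off-diagonal blocks all equal $K^N$, with $F^N,K^N$ both symmetric. Writing this as a Kronecker sum, $M=I_N\otimes(F^N-K^N)+\mathbf{1}_{N\times N}\otimes K^N$. There are two natural routes to the conclusion $E^N=E^{NT}$: a purely abstract permutation-symmetry argument, and an explicit diagonalization. I would carry out the explicit diagonalization, since it produces a closed-form expression for $E^N$ that displays the symmetry at a glance and may be reused in later computations of the decentralized control.

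The key steps are as follows. First I would introduce the orthogonal projections $P_1=\tfrac1N\mathbf{1}_{N\times N}$ onto $\mathrm{span}(\mathbf{1}_{N\times 1})$ and $P_2=I_N-P_1$ onto its complement, and use $\mathbf{1}_{N\times N}=NP_1$, $I_N=P_1+P_2$ to rewrite $M=P_1\otimes[F^N+(N-1)K^N]+P_2\otimes[F^N-K^N]$. Next, invoking the positive-definiteness already furnished by Lemma~\ref{lemma:chpoly} (which forces both $F^N+(N-1)K^N$ and $F^N-K^N=\mathcal{R}_1(\Lambda_1^N)$ to be invertible), and using $P_1^2=P_1$, $P_2^2=P_2$, $P_1P_2=0$, the inverse splits block-diagonally as $M^{-1}=P_1\otimes[F^N+(N-1)K^N]^{-1}+P_2\otimes[F^N-K^N]^{-1}$. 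Reading off the $(1,2)$ block (where $P_1$ contributes $1/N$ and $P_2$ contributes $-1/N$) then gives $E^N=\tfrac1N\big([F^N+(N-1)K^N]^{-1}-[F^N-K^N]^{-1}\big)$. Since $F^N$ and $K^N$ are symmetric, both inverses on the right are symmetric, so $E^N=E^{NT}$, and as a byproduct this confirms that $M^{-1}$ has exactly the block form assumed in \eqref{inv R+2M_2(P)}.

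I do not expect a genuine obstacle here; the argument is essentially bookkeeping with Kronecker products, and the only nontrivial input, invertibility of the two $n_1\times n_1$ reduced blocks, is already supplied by Lemma~\ref{lemma:chpoly}. The one point requiring a line of care is verifying that the inverse really inherits the constant-diagonal/constant-off-diagonal structure of \eqref{inv R+2M_2(P)} rather than merely being symmetric; the projection identities above settle this cleanly. As a cross-check I would also note the abstract route: $M$ commutes with every $\Pi_\sigma\otimes I_{n_1}$ for $\sigma\in S_N$ (since it is built from $I_N$ and $\mathbf{1}_{N\times N}$, both permutation-invariant), hence so does $M^{-1}$; taking $\sigma$ to be the transposition $(1\,2)$ yields $(M^{-1})_{12}=(M^{-1})_{21}$, and symmetry of $M^{-1}$ gives $(M^{-1})_{21}=(M^{-1})_{12}^{T}$, so that $E^N=(M^{-1})_{12}=E^{NT}$, in agreement with the explicit formula.
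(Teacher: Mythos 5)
Your proposal is correct, and your primary route differs from the paper's. The paper proves the lemma by conjugating the identity $I=(\mathbf{R}+2\mathcal{M}_2(\mathbf{P}))(\mathbf{R}+2\mathcal{M}_2(\mathbf{P}))^{-1}$ with the block-transposition matrices $J_{ij}$, using $J_{ij}^T(\mathbf{R}+2\mathcal{M}_2(\mathbf{P}))J_{ij}=\mathbf{R}+2\mathcal{M}_2(\mathbf{P})$ to conclude $J_{ij}^T(\mathbf{R}+2\mathcal{M}_2(\mathbf{P}))^{-1}J_{ij}=(\mathbf{R}+2\mathcal{M}_2(\mathbf{P}))^{-1}$ and hence $E^{NT}=E^N$ --- this is exactly the ``abstract route'' you relegate to a cross-check. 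Your main argument instead diagonalizes $M$ over the orthogonal projections $P_1=\tfrac1N\mathbf{1}_{N\times N}$ and $P_2=I_N-P_1$ and inverts block-by-block; the resulting closed form $E^N=\tfrac1N\bigl([F^N+(N-1)K^N]^{-1}-[F^N-K^N]^{-1}\bigr)$ is precisely the paper's \eqref{enform} (note $F^N-K^N=\mathcal{R}_1(\Lambda_1^N)$ and $F^N+(N-1)K^N=\mathcal{R}_2(\Lambda_1^N,\Lambda_2^N)-\tfrac1N B_0^T\Lambda_2^N B_0$), which the paper obtains separately afterwards by solving the two linear block equations relating $H^N$, $E^N$, $F^N$, $K^N$. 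So your approach buys two things at once: it proves the symmetry of $E^N$ and derives \eqref{enform}--\eqref{hnform} in a single computation, and it rigorously justifies the constant-block structure asserted in \eqref{inv R+2M_2(P)}, which the paper attributes to symmetry alone even though symmetry by itself does not force all off-diagonal blocks to coincide (that requires the permutation invariance established inside the paper's proof). The only inputs you use --- invertibility of the two reduced $n_1\times n_1$ blocks and symmetry of $F^N,K^N$ --- are indeed supplied by Lemma~\ref{lemma:chpoly}, so there is no gap.
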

\begin{proof}
See Appendix~\ref{appendix: pf S submatrices}.
\end{proof}

By \eqref{R+2M_2(P)} and \eqref{inv R+2M_2(P)}, we get the relation
\begin{align}
& F^N H^N + (N-1) K^N E^N = I , \nonumber\\
& F^N E^N + K^N H^N + (N-2)K^N E^N  = 0 , \notag
\end{align}
which gives $ H^N  = E^N +  ( \mathcal{R}_1(\Lambda_1^N) )^{-1}$.
We obtain
\begin{align}
E^N= \ &(1/N)\{ [ \mathcal{R}_2(\Lambda_1^N, \Lambda_2^N)  - (1/N) B_0^T \Lambda_2^N  B_0]^{-1} -  ( \mathcal{R}_1(\Lambda_1^N) )^{-1}\},
\label{enform}\\
 H^N=\ & E^N+ ( \mathcal{R}_1(\Lambda_1^N) )^{-1},\label{hnform}
\end{align}
where each matrix inverse can be shown to  exist by Lemma \ref{lemma:chpoly}.

Our method below is to reduce the ODE of $\mathbf{P}(t)$ to some
lower-order ODE system.  We introduce the following system:
\begin{align}
& \begin{cases}
\dot \Lambda_1^N(t)  = \Psi_1(\Lambda_1^N )  + g_1(N, \Lambda_1^N, \Lambda_2^N) , \\
\dot \Lambda_2^N(t)  = \Psi_2(\Lambda_1^N , \Lambda_2^N) + g_2(N, \Lambda_1^N, \Lambda_2^N) ,  \\
\Lambda_1^N(T) = Q_f  + (1/N)Q^\Gamma_{f} , \quad
\Lambda_2^N(T)  = Q^\Gamma_f     ,  \\
 \mathcal{R}_1(\Lambda_1(t)) > 0 ,\quad
\  \mathcal{R}_2(\Lambda_1(t), \Lambda_2(t))    > 0 ,  \\
\mathcal{R}_2(\Lambda_1^N(t), \Lambda_2^N(t))   - ({1}/{N}) B_0^T \Lambda_2^N(t) B_0 > 0 ,
\quad \forall t \in [0, T] ,
\end{cases}
\label{ODELam12N}
\end{align}
where $g_1$ and $g_2$ are defined as
 \begin{align}
& g_1(N, \Lambda_1^N, \Lambda_2^N) \nonumber \\
&\coloneqq   \Lambda_1^N B E^N B^T \Lambda_1^N + (1-{1}/{N})( {\Lambda_2^N} B E^N  B^T \Lambda_1^N
  +  \Lambda_1^N B E^N B^T {\Lambda_2^N})
\notag \\
 &\quad  + (1/N-1/N^2){\Lambda_2^N} B [H^N + (N-2)E^N] B^T
 {\Lambda_2^N} \nonumber \\
& \quad-(1/N) [ (\Lambda_1^N  {G}+ {G^T}  \Lambda_1^N) + (1-1/N) ({\Lambda_2^N} {G}+ {G^T} {\Lambda_2^N})] - Q^\Gamma/N ,
\notag \\
& g_2(N, \Lambda_1^N, \Lambda_2^N) \nonumber  \\
&\coloneqq   (\Lambda_1^N + \Lambda_2^N) B \{ N E^N + (\mathcal{R}_1(\Lambda_1^N))^{-1}
 - (\mathcal{R}_2(\Lambda_1^N , \Lambda_2^N))^{-1} \} B^T (\Lambda_1^N + \Lambda_2^N ) \notag \\
& \quad - ({2}/{N})\Lambda_2^N B( \mathcal{R}_1(\Lambda_1^N))^{-1} B^T \Lambda_2^N  + ({1}/{N}-2) \Lambda_2^N B E^N B^T
 \Lambda_2 ^N \notag \\
& \quad - \Lambda_1^N B E^N B^T \Lambda_2^N - \Lambda_2^N B E^N B^T \Lambda_1^N
+ (\Lambda_2^N {G} + {G^T} \Lambda_2^N)/N , \notag
\end{align}
where  $E^N$ and $H^N$ are expressed as two functions of $(\Lambda_1^N, \Lambda_2^N)\in {\mathcal S}^n \times {\mathcal S}^n$ according to  \eqref{enform}--\eqref{hnform}.
How this system arises will be clear from our subsequent analysis. It is essentially derived from \eqref{ODE P} (which implies \eqref{RL1ge0}--\eqref{RL12ge0} ) after imposing the additional condition $\mathcal{R}_2(\Lambda_1^N, \Lambda_2^N)>0$ as required by $\Psi_2$ and $g_2$.

For the first term in the expression of $g_2$, we check
\begin{align}
\xi_N(\Lambda_1^N,\Lambda_2^N)\coloneqq &N E^N + (\mathcal{R}_1(\Lambda_1^N))^{-1}
  - ( \mathcal{R}_2(\Lambda_1^N, \Lambda_2^N) )^{-1}  \nonumber  \\
=& (1/N) (\mathcal{R}_2(\Lambda_1^N, (1-1/N)\Lambda_2^N))^{-1} B_0^T \Lambda_2^N  B_0 (\mathcal{R}_2(\Lambda_1^N, \Lambda_2^N))^{-1} . \label{g2mid}
\end{align}
And further  recalling the factor $1/N$ in the expression of $E^N$ in \eqref{enform}, we may view $g_1$ and $g_2$ as two small perturbation terms in the system \eqref{ODELam12N}.

\begin{remark}\label{remark:Lamode}
We have $\Psi_1: {\mathcal S}^n \to {\mathcal S}^n$, and
$\Psi_2,\ g_1(N, \cdot, \cdot),\ g_2(N, \cdot, \cdot): {\mathcal S}^n \times {\mathcal S}^n \to {\mathcal S}^n$.
The system \eqref{ODELam12N} may  stand alone without being immediately related to \eqref{P submatrices}.
\end{remark}

\begin{remark}
The third  positive-definiteness condition
in \eqref{ODELam12N} is needed due to the corresponding matrix inverse appearing in $E^N$, $g_1$ and $g_2$.
\end{remark}

The inverse matrix $(\mathbf{R} + 2 \mathcal{M}_2(\mathbf{P}))^{-1}$ in the Riccati ODE
\eqref{ODE P} contains submatrices $E^N$ and $H^N$, which are highly nonlinear in
$(\Lambda_1^N, \Lambda_2^N)$ according to \eqref{enform}--\eqref{hnform}.
Accordingly, \eqref{ODELam12N} is highly nonlinear.
This feature distinguishes our model from \cite{HZ2018a,HZ2020,MH2020}.

\begin{lemma} \label{lemma:P2Lam}
 \emph{(i)}
Suppose \eqref{ODE P} has a solution $\mathbf{P}$ on $[0,T]$, and let $(\Lambda_1^N, \Lambda_2^N)$ be defined by \eqref{P submatrices} and \eqref{Lam12}. Further assume $\mathcal{R}_2(\Lambda_1^N, \Lambda_2^N)>0$ for all $t\in [0,T]$. Then $(\Lambda_1^N, \Lambda_2^N)$ satisfies \eqref{ODELam12N} on $[0,T]$.

\emph{(ii)} Conversely, if $(\Lambda_1^N, \Lambda_2^N)$ is a solution of
\eqref{ODELam12N} on $[0,T]$, then \eqref{ODE P} has a (necessarily unique) solution $\mathbf{P}$ on $[0,T]$, which is related to $(\Lambda_1^N, \Lambda_2^N)$ by  \eqref{P submatrices} and \eqref{Lam12}.
 \end{lemma}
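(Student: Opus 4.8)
The plan is to exploit the fact that the block structure \eqref{P submatrices} is invariant under the right-hand side of the Riccati ODE \eqref{ODE P}, thereby collapsing the $Nn\times Nn$ equation into a coupled pair of $n\times n$ equations for the diagonal block $\Pi_1^N$ and a generic off-diagonal block $\Pi_2^N$. Writing $\widehat{\mathbf{B}}=I_N\otimes B$, $\mathbf{A}=I_N\otimes A+\mathbf{1}_{N\times N}\otimes(G/N)$ and $\mathbf{Q}=I_N\otimes Q+\mathbf{1}_{N\times N}\otimes(Q^\Gamma/N)$, and using \eqref{inv R+2M_2(P)} together with Lemma \ref{lemma:EN} (so that the inverse has diagonal blocks $H^N$ and symmetric off-diagonal blocks $E^N$ given by \eqref{enform}--\eqref{hnform}), each of the three terms on the right of \eqref{ODE P} has constant diagonal blocks and constant off-diagonal blocks. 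Hence $\dot{\mathbf P}$ inherits the same form and the matrix ODE is equivalent to its $(1,1)$-block and $(1,2)$-block equations.

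For part (i), I would first invoke Lemma \ref{lm: P submatrices} to put $\mathbf{P}$ in the form \eqref{P submatrices}, and set $\Lambda_1^N,\Lambda_2^N$ by \eqref{Lam12}. Denoting the $(i,j)$ block of $\mathbf{P}$ by $\mathbf{P}_{ij}$ and that of the inverse by $W_{ij}$, the quadratic term $\mathbf P\widehat{\mathbf B}(\mathbf R+2\mathcal M_2(\mathbf P))^{-1}\widehat{\mathbf B}^T\mathbf P$ has $(1,1)$-block $\sum_{l,k}\mathbf P_{1l}BW_{lk}B^T\mathbf P_{k1}$. Splitting the sum according to whether $l$ and $k$ equal $1$ (there being $N-1$ off-diagonal choices for each, and $W_{lk}=H^N$ precisely when $l=k$), then substituting $\Pi_1^N=\Lambda_1^N$, $\Pi_2^N=\Lambda_2^N/N$ and the identity $H^N=E^N+(\mathcal R_1(\Lambda_1^N))^{-1}$, the $(\mathcal R_1)^{-1}$-piece reproduces the first term of $\Psi_1$ in \eqref{psi_1} while the remaining $E^N$- and $H^N$-pieces match, coefficient by coefficient (the factors $(N-1)/N=1-1/N$ and $(N-1)/N^2=1/N-1/N^2$ appearing exactly), the quadratic part of $g_1$. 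The linear and constant blocks are immediate: $(\mathbf{P}\mathbf{A})_{11}=\Lambda_1^N A+\tfrac1N[\Lambda_1^N+(1-\tfrac1N)\Lambda_2^N]G$ and $\mathbf{Q}_{11}=Q+Q^\Gamma/N$, supplying the $-\Lambda_1^N A-A^T\Lambda_1^N-Q$ of $\Psi_1$ and the remaining $G$- and $Q^\Gamma$-terms of $g_1$. This yields $\dot\Lambda_1^N=\Psi_1(\Lambda_1^N)+g_1$; the $(1,2)$-block computed in the same way, then multiplied by $N$, yields $\dot\Lambda_2^N=\Psi_2(\Lambda_1^N,\Lambda_2^N)+g_2$, the combination $NE^N+(\mathcal R_1)^{-1}-(\mathcal R_2)^{-1}$ in $g_2$ being exactly the quantity $\xi_N$ repackaged by \eqref{g2mid}. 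The terminal condition $\mathbf P(T)=\mathbf Q_f$ gives $\Lambda_1^N(T)=Q_f+Q_f^\Gamma/N$ and $\Lambda_2^N(T)=Q_f^\Gamma$, and the three positive-definiteness constraints in \eqref{ODELam12N} follow from \eqref{RL1ge0}--\eqref{RL12ge0} of Lemma \ref{lemma:chpoly} together with the standing hypothesis $\mathcal R_2(\Lambda_1^N,\Lambda_2^N)>0$.

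For part (ii), I would run this correspondence backwards: given a solution $(\Lambda_1^N,\Lambda_2^N)$ of \eqref{ODELam12N}, define $\mathbf P$ by \eqref{P submatrices} and \eqref{Lam12}. The key preliminary is $\mathbf R+2\mathcal M_2(\mathbf P)>0$, which I would establish via the characteristic-polynomial factorization in Lemma \ref{lemma:chpoly}: since $F^N-K^N=\mathcal R_1(\Lambda_1^N)$ and $F^N+(N-1)K^N=\mathcal R_2(\Lambda_1^N,\Lambda_2^N)-(1/N)B_0^T\Lambda_2^N B_0$, the two positivity conditions in \eqref{ODELam12N} force all eigenvalues positive, so the inverse exists with the block form \eqref{inv R+2M_2(P)} and the same $E^N,H^N$. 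Reassembling the two scalar identities of part (i) into block form then reproduces verbatim the right-hand side of \eqref{ODE P} and the terminal value $\mathbf Q_f$, so $\mathbf P$ solves \eqref{ODE P}; uniqueness is Remark \ref{remark:Punique}. The main obstacle is the bookkeeping in the $(1,2)$-block of the quadratic term, where one must correctly enumerate the index pairs $(l,k)$ and then repackage several inverse-matrix expressions using \eqref{enform}--\eqref{hnform} and \eqref{g2mid}; this is precisely where the nonstandard groupings in $g_2$ (such as $NE^N+(\mathcal R_1(\Lambda_1^N))^{-1}-(\mathcal R_2(\Lambda_1^N,\Lambda_2^N))^{-1}$) arise and must be matched term by term against $\Psi_2$ in \eqref{psi_2}.
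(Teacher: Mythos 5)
Your proposal is correct and follows essentially the same route as the paper's proof: block decomposition of \eqref{ODE P} via Lemma~\ref{lm: P submatrices} and the inverse form \eqref{inv R+2M_2(P)}, the inequality conditions from Lemma~\ref{lemma:chpoly} (plus the standing hypothesis on $\mathcal{R}_2$) for part (i), and the characteristic-polynomial factorization to get $\mathbf{R}+2\mathcal{M}_2(\mathbf{P})>0$ before verifying \eqref{ODE P} blockwise for part (ii). The paper states these steps tersely; your version simply carries out the block bookkeeping (which checks out) explicitly.
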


\begin{proof}
(i) After determining $(\Lambda_1^N, \Lambda_2^N)$  from $\mathbf{P}$ and
\eqref{Lam12}, it follows from  the last part of Lemma \ref{lemma:chpoly}
that the first and third inequality conditions in \eqref{ODELam12N} are satisfied.
By using \eqref{ODE P}, we  further derive the two ODEs in
\eqref{ODELam12N}.

(ii) Let $\mathbf{P}$ be defined by \eqref{P submatrices} and \eqref{Lam12} using $(\Lambda_1^N, \Lambda_2^N)$ solved from \eqref{ODELam12N}. By the characteristic polynomial in the proof of Lemma \ref{lemma:chpoly}, $\mathbf{R} + 2 \mathcal{M}_2(\mathbf{P})>0$ for all $t\in [0,T]$. Using the expression of  $(\mathbf{R} + 2 \mathcal{M}_2(\mathbf{P}))^{-1}$, we may directly verify the ODE in \eqref{ODE P}.
\end{proof}

\begin{lemma}\label{lemma:AStrue}
Suppose the social optimization problem \eqref{X_i}--\eqref{J soc}  has asymptotic solvability with $N\ge N_0$ in \eqref{AS 2}, and let $(\Lambda_1^N(t), \Lambda_2^N(t))$ be defined using $\mathbf{P}$  satisfying \eqref{ODE P}, \eqref{P submatrices}
and \eqref{Lam12}.  Then
there exists $N_1>N_{0}$ such that
$(\Lambda_1^N, \Lambda_2^N)$ satisfies \eqref{ODELam12N}  for all $N\ge N_1$ and  we further have
\begin{align}
& \sup_{N\geq N_1, 0\leq t \leq T}  (|\Lambda_1^N(t)| +  |\Lambda_2^N(t)|) < \infty ,  \label{lam12bnd}\\
& \mathcal{R}_1(\Lambda_1^N(t)) \ge c_1 I ,\quad \forall N\ge N_1,\label{Rc1I}\\
&  \mathcal{R}_2(\Lambda_1^N(t), \Lambda_2^N(t))    \ge c_1 I,\quad \forall N\ge N_1,\label{RB1c1}
\end{align}
for all $t\in [0,T]$, where $c_1>0$ is a fixed constant.
\end{lemma}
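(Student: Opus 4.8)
The plan is to read everything off the block structure of $\mathbf{P}$ together with the two quantitative conditions \eqref{AS 2} and \eqref{AS 3} that asymptotic solvability provides, and then to invoke Lemma~\ref{lemma:P2Lam}(i) once the required positive-definiteness is in hand. First I would establish the uniform boundedness \eqref{lam12bnd}. By Lemma~\ref{lm: P submatrices}, $\mathbf{P}$ has $N$ diagonal blocks equal to $\Pi_1^N$ and $N(N-1)$ off-diagonal blocks equal to $\Pi_2^N$, so that $\left\| \mathbf{P}\right\|_{l_1}/N = \left\| \Pi_1^N\right\|_{l_1} + (N-1)\left\| \Pi_2^N\right\|_{l_1}$. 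The bound \eqref{AS 2} forces each of the two nonnegative summands to be $O(1)$, whence $\left\| \Pi_1^N\right\|_{l_1} = O(1)$ and $\left\| \Pi_2^N\right\|_{l_1} = O(1/N)$. Recalling the rescaling $\Lambda_1^N = \Pi_1^N$, $\Lambda_2^N = N\Pi_2^N$ from \eqref{Lam12} and using equivalence of norms on the fixed-size $n\times n$ blocks (with constants independent of $N$), this yields $\sup_{N\ge N_0, 0\le t\le T}(|\Lambda_1^N| + |\Lambda_2^N|) < \infty$, which is \eqref{lam12bnd}.

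Next I would convert the single positivity \eqref{AS 3} into the two matrix inequalities \eqref{Rc1I} and \eqref{RB1c1}. The key tool is the characteristic-polynomial factorization established in the proof of Lemma~\ref{lemma:chpoly}: the spectrum of $\mathbf{R} + 2\mathcal{M}_2(\mathbf{P})$ is the union of the spectra of the symmetric matrices $F^N - K^N$ and $F^N + (N-1)K^N$. Hence \eqref{AS 3} gives simultaneously $F^N - K^N \ge c_0 I$ and $F^N + (N-1)K^N \ge c_0 I$. Since $F^N - K^N = \mathcal{R}_1(\Lambda_1^N)$ exactly, the first inequality is precisely \eqref{Rc1I} (with constant $c_0$), while a short computation identifies $F^N + (N-1)K^N = \mathcal{R}_2(\Lambda_1^N, \Lambda_2^N) - (1/N)B_0^T\Lambda_2^N B_0$, so the second inequality reproduces the third positivity condition appearing in \eqref{ODELam12N}.

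It then remains to pass from $\mathcal{R}_2(\Lambda_1^N, \Lambda_2^N) - (1/N)B_0^T\Lambda_2^N B_0 \ge c_0 I$ to the clean bound \eqref{RB1c1} on $\mathcal{R}_2$ itself, and this is exactly where the boundedness of the previous step is used: the discarded term obeys $|(1/N)B_0^T\Lambda_2^N B_0| \le (1/N)|B_0|^2 \sup_t |\Lambda_2^N| \to 0$, so I would choose $N_1 > N_0$ large enough that this term is dominated by $(c_0/2)I$, giving $\mathcal{R}_2(\Lambda_1^N, \Lambda_2^N) \ge (c_0/2)I$ for all $N \ge N_1$; setting $c_1 = c_0/2$ then secures both \eqref{Rc1I} and \eqref{RB1c1}. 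With $\mathcal{R}_2(\Lambda_1^N, \Lambda_2^N) > 0$ verified for $N \ge N_1$, all the hypotheses of Lemma~\ref{lemma:P2Lam}(i) hold, so $(\Lambda_1^N, \Lambda_2^N)$ solves \eqref{ODELam12N}, completing the argument.

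The only genuinely delicate point is the logical ordering rather than any hard estimate: the $1/N$ perturbation term can be absorbed only after the uniform boundedness of $\Lambda_2^N$ has been established, so \eqref{lam12bnd} must be proved before refining the positive-definiteness, and $N_1$ must be enlarged beyond $N_0$ precisely to control that term. Everything else is a direct transcription of the spectral factorization of Lemma~\ref{lemma:chpoly} and the block algebra relating $F^N, K^N$ to $\mathcal{R}_1, \mathcal{R}_2$.
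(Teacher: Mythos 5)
Your proposal is correct and follows essentially the same route as the paper's proof: extract \eqref{Rc1I} and the inequality $\mathcal{R}_2(\Lambda_1^N,\Lambda_2^N)-(1/N)B_0^T\Lambda_2^N B_0\ge c_0 I$ from the spectral factorization of Lemma~\ref{lemma:chpoly}, obtain the uniform bound \eqref{lam12bnd} from \eqref{AS 2} via the block structure \eqref{P submatrices}, absorb the $O(1/N)$ term to get \eqref{RB1c1} with $c_1=c_0/2$ for $N\ge N_1$, and conclude via Lemma~\ref{lemma:P2Lam}(i). Your write-up simply spells out the block-norm computation and the ordering of the steps in more detail than the paper does.
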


\begin{proof}
Suppose \eqref{AS 3} holds with the parameter $c_0$.
By the characteristic polynomial in the proof of Lemma \ref{lemma:chpoly},  we have
\begin{align}
 \mathcal{R}_1(\Lambda_1^N(t)) \ge c_0 I, \quad
 \mathcal{R}_2(\Lambda_1^N(t), \Lambda_2^N(t))  -
 (1/N) B_0^T \Lambda_2^N
B_0 \ge c_0 I \label{R1R2Nc0}
\end{align}
 for all $N\ge N_0$.
By \eqref{AS 2}  and the relation \eqref{P submatrices}, we have
$$
 \sup_{N\geq N_0, 0\leq t \leq T}  (|\Lambda_1^N(t)| +  |\Lambda_2^N(t)|)
< \infty.
$$
Hence there exists $N_1\ge N_0$ such that for all $N\ge N_1$, \eqref{Rc1I} and  \eqref{RB1c1} hold with $c_1=c_0/2$ by \eqref{R1R2Nc0}. Obviously \eqref{lam12bnd} holds. So for all $N\ge N_1$, \eqref{ODELam12N}  holds
by Lemma \ref{lemma:P2Lam} (i).
\end{proof}

\begin{lemma}
\label{lm: AS equiv}
Suppose there exists
$N_1>0$ such that
\eqref{ODELam12N} has a solution
$(\Lambda_1^N, \Lambda_2^N)$ on $[0, T]$  for all $N\geq N_1$, which further satisfies
\eqref{lam12bnd}--\eqref{RB1c1}
for some constant $c_1 >0$.
Then the social optimization problem \eqref{X_i}--\eqref{J soc} has asymptotic solvability.
\end{lemma}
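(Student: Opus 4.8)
The plan is to run the reduction of Lemma~\ref{lemma:AStrue} in reverse: starting from a uniformly bounded family of solutions of the rescaled low-dimensional system \eqref{ODELam12N}, I would reconstruct the high-dimensional Riccati solution $\mathbf{P}$ and verify, one by one, the three requirements in Definition~\ref{def: AS}. The existence requirement is immediate from Lemma~\ref{lemma:P2Lam}(ii): for each fixed $N\ge N_1$ the pair $(\Lambda_1^N,\Lambda_2^N)$ solves \eqref{ODELam12N} on $[0,T]$, hence \eqref{ODE P} has a (unique) solution $\mathbf{P}$ on $[0,T]$ of the block form \eqref{P submatrices} with $\Pi_1^N=\Lambda_1^N$ and $\Pi_2^N=\Lambda_2^N/N$ according to \eqref{Lam12}.

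Next I would verify the growth bound \eqref{AS 2} by a direct norm computation on the block structure \eqref{P submatrices}. Since $\mathbf{P}$ has $N$ diagonal blocks equal to $\Lambda_1^N$ and $N(N-1)$ off-diagonal blocks equal to $\Lambda_2^N/N$, one has
\[
\|\mathbf{P}\|_{l_1}=N\|\Lambda_1^N\|_{l_1}+(N-1)\|\Lambda_2^N\|_{l_1},
\]
so that $\|\mathbf{P}\|_{l_1}/N\le \|\Lambda_1^N\|_{l_1}+\|\Lambda_2^N\|_{l_1}$. The uniform bound \eqref{lam12bnd}, combined with the equivalence of $\|\cdot\|_{l_1}$ and the Euclidean norm on the fixed space $\mathcal{S}^n$, then yields a uniform bound on $\|\mathbf{P}\|_{l_1}/N$, establishing \eqref{AS 2}.

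The remaining step, \eqref{AS 3}, is the heart of the argument. Here I would invoke Lemma~\ref{lemma:chpoly}, by which the eigenvalues of $\mathbf{R}+2\mathcal{M}_2(\mathbf{P})$ are exactly those of $F^N-K^N$ (with multiplicity $N-1$) and of $F^N+(N-1)K^N$ (with multiplicity $1$), both symmetric. The first mode is clean, since $F^N-K^N=\mathcal{R}_1(\Lambda_1^N)\ge c_1 I$ by \eqref{Rc1I}. For the symmetric mode I would use the algebraic identity
\[
F^N+(N-1)K^N=\mathcal{R}_2(\Lambda_1^N,\Lambda_2^N)-(1/N)B_0^T\Lambda_2^N B_0,
\]
obtained by collecting $NK^N+\mathcal{R}_1(\Lambda_1^N)$. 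Then \eqref{RB1c1} gives $\mathcal{R}_2(\Lambda_1^N,\Lambda_2^N)\ge c_1 I$, while the correction $(1/N)B_0^T\Lambda_2^N B_0$ has norm $O(1/N)$ by \eqref{lam12bnd}. Choosing $N_0\ge N_1$ large enough that this correction has norm below $c_1/2$, both modes are $\ge (c_1/2)I$; since $\mathbf{R}+2\mathcal{M}_2(\mathbf{P})$ is symmetric with all eigenvalues $\ge c_1/2$, this gives \eqref{AS 3} with $c_0=c_1/2$. The three verifications together yield asymptotic solvability.

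I expect the main obstacle to be precisely the symmetric mode $F^N+(N-1)K^N$: because the coupling contribution $NK^N$ persists in the $N\to\infty$ limit, it cannot be dismissed as a small perturbation, so the uniform lower bound cannot be read off from $\mathcal{R}_1$ alone. The device that makes it work is the identity reducing $F^N+(N-1)K^N$ to $\mathcal{R}_2(\Lambda_1^N,\Lambda_2^N)$ plus a genuinely $O(1/N)$ remainder, which is controlled uniformly in $N$ by \eqref{lam12bnd} and bounded below by \eqref{RB1c1}; everything else is bookkeeping on the block structure.
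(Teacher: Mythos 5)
Your proposal is correct and follows essentially the same route as the paper: reconstruct $\mathbf{P}$ via Lemma~\ref{lemma:P2Lam}(ii), read off \eqref{AS 2} from the block structure and \eqref{lam12bnd}, and obtain \eqref{AS 3} from the eigenvalue factorization of Lemma~\ref{lemma:chpoly}, using $F^N-K^N=\mathcal{R}_1(\Lambda_1^N)$ and $F^N+(N-1)K^N=\mathcal{R}_2(\Lambda_1^N,\Lambda_2^N)-(1/N)B_0^T\Lambda_2^NB_0\ge (c_1/2)I$ for $N$ large. Your identity for the symmetric mode is exactly the quantity $\zeta$ the paper introduces, so the only difference is that you spell out the $l_1$-norm bookkeeping that the paper leaves implicit.
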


\begin{proof}
First, after solving \eqref{ODELam12N} to obtain
$(\Lambda_1^N, \Lambda_2^N)$ for $N\ge N_1 $, let $\mathbf{P}$ be defined
by     \eqref{P submatrices} and \eqref{Lam12}. Then \eqref{ODE P} holds
by Lemma \ref{lemma:P2Lam} (ii).

By \eqref{lam12bnd} and \eqref{RB1c1},
there exists $N_2> N_1$ such that we have
\begin{align}
\zeta\coloneqq
\mathcal{R}_2(\Lambda_1^N, \Lambda_2^N) - ({1}/{N}) B_0^T \Lambda_2^N B_0\ge (c_1/2) I \notag
\end{align}
 for all $N\ge N_2$, $t\in [0,T]$.
Now for $N\ge N_2$,  by the proof of Lemma \ref{lemma:chpoly} all eigenvalues of
$\mathbf{R}+2\mathcal M_2(\mathbf{P})$ are exactly the solutions of the two equations
$$\det(\lambda I - \zeta)=0, \qquad
[\det(\lambda I - \mathcal{R}_1(\Lambda_1^N(t)))]^{N-1}=0.
$$
Hence $\mathbf{R}+2\mathcal M_2(\mathbf{P})\ge (c_1/2) I$. By \eqref{lam12bnd}, $\mathbf{P}$ satisfies \eqref{AS 2} by taking $N_0= N_2$. Therefore, asymptotic solvability holds.
\end{proof}

When there exists $N_1>0$ such that for each $N\geq N_1$,
\eqref{ODELam12N} has a solution $(\Lambda_1^N, \Lambda_2^N)$ on $[0, T]$
that satisfies \eqref{lam12bnd}--\eqref{RB1c1}, then by \eqref{enform} and \eqref{g2mid}
we obtain
\begin{align}
\sup_{0\le t\le T}| g_1(N, \Lambda_1^N, \Lambda_2^N)| = O(1/N) , \quad
  \sup_{0\le t\le T}|g_2(N, \Lambda_1^N, \Lambda_2^N)| =  O(1/N). \notag
\end{align}
The system \eqref{ODELam1}--\eqref{ODELam2} may be regarded as the limit of \eqref{ODELam12N}.
Lemmas~\ref{lemma:P2Lam} and \ref{lemma:AStrue} relate  asymptotic solvability of
the social optimization problem to the low-dimensional system
\eqref{ODELam12N}.

\begin{proof}[Proof of Theorem~\ref{thm: NSAS}]
(i)--Necessity.
If the social optimization problem \eqref{X_i}--\eqref{J soc} has asymptotic solvability,
by Lemma~\ref{lemma:AStrue}, there exists $N_1>0$ such that for each $N\geq N_1$, \eqref{ODELam12N} has a solution $(\Lambda_1^N, \Lambda_2^N)$
on $[0, T]$ that satisfies \eqref{lam12bnd}--\eqref{RB1c1} for some constant   $c_1 >0$.
From the integral form
\begin{align}
& \Lambda_1^N(t) = \Lambda_1^N(T) - \int_t^T [\Psi_1(\Lambda_1^N) + g_1(N, \Lambda_1^N, \Lambda_2^N)] d \tau ,  \label{int Lambda^N}  \\
& \Lambda_2^N(t) = \Lambda_2^N(T) - \int_t^T [\Psi_2(\Lambda_1^N, \Lambda_2^N) + g_2(N, \Lambda_1^N, \Lambda_2^N)] d\tau ,
\label{int Lambda_1^N}
\end{align}
we have that $\{ ( \Lambda_1^N(\cdot) , \Lambda_2^N(\cdot) ) \}_{N\geq N_1}$ are bounded and equicontinuous on $[0, T]$.
By Arzel\`{a}-Ascoli theorem~\cite{Y1980}, there exists a subsequence
$\{ ( \Lambda_1^{N_j}(\cdot) , \Lambda_2^{N_j}(\cdot) ) \}_{j\geq 1}$  that  converges to
$(\Lambda_1^\ast, \Lambda_2^\ast)$ uniformly on $[0, T]$ as $j\to\infty$.
Then it follows from \eqref{int Lambda^N}--\eqref{int Lambda_1^N} and \eqref{Rc1I}--\eqref{RB1c1} that
\begin{align}
& \Lambda_1^\ast(t) = \Lambda_1^\ast(T) - \int_t^T \Psi_1(\Lambda_1^\ast)  d\tau , \qquad \Lambda_2^\ast(t) = \Lambda_2^\ast(T) - \int_t^T \Psi_2(\Lambda_1^\ast, \Lambda_2^\ast )  d \tau ,  \quad
 \notag\\
& \mathcal{R}_1 ( \Lambda_1^\ast(t) ) \geq c_1 I , \quad
 \mathcal{R}_2 (\Lambda_1^\ast(t) , \Lambda_2^\ast(t) )
 \geq c_1 I , \quad \forall t \in [0, T] , \notag
\end{align}
where $\Lambda_1^\ast(T) = Q_f$ and $\Lambda_2^\ast(T)
= Q^\Gamma_f
$.
Thus $(\Lambda_1^\ast, \Lambda_2^\ast)$ solves the system \eqref{ODELam1}--\eqref{ODELam2}.

(ii)--Sufficiency.
Step 1.
 Suppose \eqref{ODELam1}--\eqref{ODELam2} has a  solution $(\Lambda_1, \Lambda_2)$ on $[0, T]$.
Then there exists $h_0 >0$ such that for all $ t\in [0,T]$, we have
\begin{align}
  \mathcal{R}_1(\Lambda_1(t)) \ge h_0 I, \quad
  \mathcal{R}_2(\Lambda_1(t), \Lambda_2(t)) \ge h_0 I.   \notag
\end{align}
We will check a neighborhood of the solution trajectory $(\Lambda_1, \Lambda_2)$ on $[0,T]$.
Since $(\Lambda_1, \Lambda_2)$ is continuous on $[0,T]$, there exists $\delta_0 >0$ such that
for all $(t, Z_1 , Z_2 )\in [0,T]\times {\mathcal  S}^n \times {\mathcal S}^n$ satisfying
$|Z_1 - \Lambda_1(t)| + | Z_2 - \Lambda_2(t) | < \delta_0$, we have
\begin{align}
\mathcal{R}_1(Z_1) \ge (h_0/2) I ,  \quad
\mathcal{R}_2(Z_1 , Z_2) \ge (h_0/2) I  .  \label{eigenh0}
\end{align}
Define
\begin{align}
\mathcal{C} & \coloneqq \{ (t, Z_1 , Z_2 ) \in  [0,T]\times {\mathcal S}^n \times {\mathcal S}^n : \
  | Z_1 - \Lambda_1(t) | + | Z_2 - \Lambda_2(t) | < \delta_0 \} . \notag
\end{align}
For the given $\delta_0$, there exists a sufficiently large $N_{\delta_0}$ such that  $N\ge N_{\delta_0}$ implies
\begin{align}
 \mathcal{R}_2(Z_1 , Z_2)-(1/N) B_0^TZ_2B_0 \ge (h_0/4)I \label{R212h0}
 \end{align}
 for all $(t, Z_1, Z_2)\in {\mathcal C}$.
By \eqref{eigenh0} and boundedness of $\mathcal{C}$, there exist  constants $L_\Psi$ and $C_g$ depending on ${\mathcal C}$ but not on $N$ such that   for all $(t, Z_1, Z_2)\in {\mathcal C}$
and  all $(t, Z_1', Z_2')\in {\mathcal C}$,  we have
\begin{align*}
&|\Psi_1(Z_1)-\Psi_1(Z_1')|
+ |\Psi_2(Z_1,Z_2)-\Psi_2(Z_1',Z_2')|
\le\  L_\Psi(|Z_1-Z_1'|+|Z_2-Z_2'| ),
\end{align*}
and moreover,
$| g_1(N,Z_{1}, Z_2)|+ |g_2(N,Z_1, Z_2)|\le C_g/N$ holds for all $ N\ge N_{\delta_0}$
in view of \eqref{enform}, \eqref{g2mid}, \eqref{eigenh0} and \eqref{R212h0}.

Step 2.
Consider
\eqref{ODELam12N} (see Remark \ref{remark:Lamode}).
Since
\begin{align}
\lim_{N\to\infty} ( |\Lambda_1^N(T) - \Lambda_1(T)| + |\Lambda_2^N(T) - \Lambda_2(T)| ) = 0,\label{LaLaT0}
\end{align}
there exists $N_1\ge N_{\delta_0}$ such that for all $N\ge N_1$, we have
\begin{align}
& |\Lambda_1^N(T) - \Lambda_1(T) | + |\Lambda_2^N(T) - \Lambda_2(T) | < \delta_0/2 ,  \notag \\
&   \mathcal{R}_1(\Lambda_1^N(T))    \geq c I , \quad   \mathcal{R}_2(\Lambda_1^N(T), \Lambda_2^N(T))   \geq c I  ,
\notag\\
& \mathcal{R}_2(\Lambda_1^N(T), \Lambda_2^N(T)) -(1/N) B_0^T \Lambda_2^N(T) B_0  \geq c I ,
\end{align}
where $c>0$ is a constant.
Then for each $N\ge N_1$, the solution $(\Lambda_1^N, \Lambda_2^N)$ in \eqref{ODELam12N}   exists on some interval $[t_N , T]$, with $0 \leq t_N < T$.

Step 3.
Our plan  is to show that there exists a sufficiently large $N_2>N_1$  chosen in Step 2 such that for all $N\ge N_2$,  \eqref{ODELam12N} has a solution on $[0, T]$.

 By \eqref{LaLaT0}, we may fix a sufficiently large $\hat N\ge N_1$ such that  $N\ge \hat N$ implies
\begin{align}
 \left( | \Lambda_1^{N}(T) - \Lambda_1(T) | + |\Lambda_2^N(T) - \Lambda_2(T) | + {C_g T}/{N} \right) \exp(L_\Psi T)\le \delta_0/2.\label{LLdel0}
\end{align}
Now it suffices to show that there exists a sufficiently large $N_2 \ge N_1$ such that for all $N\ge N_2$, we have
\begin{align}
| \Lambda_1^N(t) - \Lambda_1(t)| + | \Lambda_2^N(t) - \Lambda_2(t)| < \delta_0 , \quad \forall t \in [0, T]  ,   \label{LLdel0T}
\end{align}
which then implies that $(\Lambda_1^N, \Lambda_2^N)$ exists on $[0, T]$ by \eqref{eigenh0} and \eqref{R212h0}.
Assume by contradiction that given any $l >\hat N$ there always exists some $ N^* \geq l$  such that $(t, \Lambda_1^{N^*}(t), \Lambda_2^{N^*}(t) )$
starting backward from the terminal time $T$ exits ${\mathcal C}$ for the
first time at some $t^{N^*}_0 \in [0, T)$, i.e.,
\begin{align}
[0,T) \ni t^{N^*}_0 = \sup \{ t\in [0, T]:
(t, \Lambda_1^{N^*}(t), \Lambda_2^{N^*}(t) ) \notin \mathcal{C} \},\label{tonT}
\end{align}
where $t^{N^*}_0$ may depend on $N^*$.
Since
\begin{align}
 | \Lambda_1^{N^*}(t) - \Lambda_1(t) | + | \Lambda_2^{N^*}(t) - \Lambda_2(t) | \leq \delta_0 , \quad \forall t \in [t^{N^*}_0 , T] , \notag
\end{align}
it follows that $|\Lambda_1^{N^*}|$ and $|\Lambda_2^{N^*}|$ are bounded on
$[t^{N^*}_0 , T]$. On $[t^{N^*}_0 , T]$, by Step 1  we have
\begin{align*}
&|\Psi_1(\Lambda_1^{N^*}(t))-\Psi_1(\Lambda_1(t))|
+ |\Psi_2(\Lambda_1^{N^*}(t) , \Lambda_2^{N^*}(t))-\Psi_2(\Lambda_1(t)  ,\Lambda_2(t) )|\\
& \qquad \le  L_\Psi(|\Lambda_1^{N^*}(t)-\Lambda_1(t)  |+|\Lambda_2^{N^*}(t)-\Lambda_2(t)| ),\\
&| g_1(N^*, \Lambda_1^{N^*} (t), \Lambda_2^{N^*}(t))|+ |g_2(N^*, \Lambda_1^{N^*}(t) , \Lambda_2^{N^*}(t)|\le C_g/N^*
\end{align*}
since $N^*\ge N_{\delta_0}$.

It then follows that for any $t\in [t^{N^*}_0, T]$,
\begin{align}
 & |\Lambda_1^{N^*}(t) - \Lambda_1(t) | + | \Lambda_2^{N^*}(t)  -  \Lambda_2(t) |
\notag \\
 & \leq  | \Lambda_1^{N^*}(T) - \Lambda_1(T)  | + |\Lambda_2^{N^*}(T) -
 \Lambda_2(T)|\nonumber \\
& + \int_{t}^T \Big[| \Psi_1(\Lambda_1^{N^*}) - \Psi_1(\Lambda_1 ) |
+ | \Psi_2( \Lambda_1^{N^*}, \Lambda_2^{N^*}) - \Psi_2(\Lambda_1 , \Lambda_2) |\Big] d \tau  \notag \\
& \qquad  + \int_{t}^T \Big[| g_1 (N^*, \Lambda_1^{N^*}, \Lambda_2^{N^*} ) |
 + |g_2 (N^* , \Lambda_1^{N^*}, \Lambda_2^{N^*})|\Big] d \tau
  \notag \\
 & \leq  |\Lambda_1^{N^*}(T) - \Lambda_1(T) | + |\Lambda_2^{N^*}(T) -
  \Lambda_2(T)| \nonumber  \\
& + \int_{t}^T L_\Psi (  |\Lambda_1^{N^*}  - \Lambda_1 | + |\Lambda_2^{N^*}  - \Lambda_2 | )  d\tau  + \int_{0}^T \frac{C_g}{N^*} d \tau  .
\notag
\end{align}
By Gr\"{o}nwall's lemma, we have that for all $t \in [t^{N^*}_0, T]$,
\begin{align}
 & | \Lambda_1^{N^*}(t) - \Lambda_1(t) | + | \Lambda_2^{N^*}(t) - \Lambda_2(t)|
 \notag \\
& \leq  \left( | \Lambda_1^{N^*}(T) - \Lambda_1(T) | + |\Lambda_2^{N^*}(T) - \Lambda_2(T) | + {C_g T}/{N^*} \right) \exp(L_\Psi T) ,  \notag
\end{align}
which combined with \eqref{LLdel0} implies that
$$
\sup_{t\in [t_0^{N^*}, T]}( | \Lambda_1^{N^*}(t) - \Lambda_1(t) | + | \Lambda_2^{N^*}(t) - \Lambda_2(t)|)\le \delta_0/2.
$$
This contradicts  the hypothesis in \eqref{tonT} that $(t, \Lambda_1^{N^*}(t), \Lambda_2^{N^*}(t) )$ exits $\mathcal{C}$  at $t^{N^*}_0$. Hence, there exists $N_2>N_1$ such that for all $N\ge N_2$,  \eqref{LLdel0T} holds so that $(\Lambda_1^N, \Lambda_2^N)$  exists on $[0,T]$.
In view of  \eqref{eigenh0}, we further obtain
\begin{align*}
  \mathcal{R}_1(\Lambda_1^N(t)) \ge (h_0/2) I, \quad
  \mathcal{R}_2(\Lambda_1^N(t), \Lambda_2^N(t)) \ge (h_0/2) I
\end{align*}
 for all $N\ge N_2$
and  all $t\in[0, T]$.
Then by Lemma \ref{lm: AS equiv}, the social optimization problem has asymptotic solvability.
\end{proof}

\begin{corollary}
\label{cor: sol Lambda_1^N Lambda_2^N}
If \eqref{ODELam1}--\eqref{ODELam2} has a  solution $(\Lambda_1, \Lambda_2)$ on $[0, T]$, then there exists $N_1 >0$ such that for each $N\geq N_1$, \eqref{ODELam12N} has a solution $(\Lambda_1^N, \Lambda_2^N)$ on $[0, T]$ and moreover
$\sup_{t\in [0, T]} ( |\Lambda_1^N(t) - \Lambda_1(t)| + |\Lambda_2^N(t) -
\Lambda_2(t)| ) = O(1/N)$.
\end{corollary}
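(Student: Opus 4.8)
The plan is to read off the quantitative rate directly from the sufficiency argument already carried out in the proof of Theorem~\ref{thm: NSAS}, rather than to run a new construction. That argument produces a threshold $N_2$ such that for every $N\ge N_2$ the solution $(\Lambda_1^N,\Lambda_2^N)$ of \eqref{ODELam12N} exists on all of $[0,T]$ and its graph $(t,\Lambda_1^N(t),\Lambda_2^N(t))$ remains inside the tube $\mathcal{C}$ around the limit trajectory $(\Lambda_1,\Lambda_2)$. Hence the existence assertion of the corollary requires no additional work, and I would simply take $N_1=N_2$. What remains is to sharpen the distance estimate so as to exhibit the $O(1/N)$ decay.

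First I would invoke the two uniform bounds established in Step~1 of that proof, valid throughout $\mathcal{C}$ with constants $L_\Psi,C_g$ independent of $N$: the local Lipschitz estimate
\[
|\Psi_1(Z_1)-\Psi_1(Z_1')|+|\Psi_2(Z_1,Z_2)-\Psi_2(Z_1',Z_2')|\le L_\Psi\big(|Z_1-Z_1'|+|Z_2-Z_2'|\big),
\]
together with the perturbation bound $|g_1(N,Z_1,Z_2)|+|g_2(N,Z_1,Z_2)|\le C_g/N$. Subtracting the integral form \eqref{ODELam1}--\eqref{ODELam2} from \eqref{int Lambda^N}--\eqref{int Lambda_1^N} and applying Gr\"onwall's lemma exactly as in Step~3, but now on the full interval $[0,T]$ since the trajectory is confined to $\mathcal{C}$ there, I obtain
\[
\sup_{t\in[0,T]}\big(|\Lambda_1^N(t)-\Lambda_1(t)|+|\Lambda_2^N(t)-\Lambda_2(t)|\big)\le\big(|\Lambda_1^N(T)-\Lambda_1(T)|+|\Lambda_2^N(T)-\Lambda_2(T)|+C_gT/N\big)\exp(L_\Psi T).
\]

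The final step is to compare terminal data. From the terminal conditions in \eqref{ODELam12N} versus those in \eqref{ODELam1}--\eqref{ODELam2}, one has $\Lambda_1^N(T)-\Lambda_1(T)=(1/N)Q^\Gamma_f$ and $\Lambda_2^N(T)-\Lambda_2(T)=Q^\Gamma_f-Q^\Gamma_f=0$, so the terminal discrepancy is $(1/N)|Q^\Gamma_f|=O(1/N)$. Substituting this into the displayed bound, the entire right-hand side is of order $1/N$ because $L_\Psi$, $C_g$ and $T$ are all independent of $N$, which is precisely the claimed estimate.

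Because all the substantive analysis—constructing the tube $\mathcal{C}$, verifying the $N$-uniform Lipschitz constant and the $O(1/N)$ perturbation bound, and the backward non-exit/continuation argument guaranteeing global existence on $[0,T]$—was completed within Theorem~\ref{thm: NSAS}, I do not anticipate a genuine obstacle. The only mild point of care is that in Step~3 the Gr\"onwall estimate was deployed on a subinterval $[t_0^{N^*},T]$ to derive a contradiction, whereas here it must be asserted on the whole of $[0,T]$; this transfer is immediate once global existence and confinement to $\mathcal{C}$ are established, since the Lipschitz and perturbation bounds then hold along the entire trajectory.
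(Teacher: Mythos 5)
Your proposal is correct and follows essentially the same route as the paper's proof: existence on $[0,T]$ for all large $N$ is taken from the sufficiency part of Theorem~\ref{thm: NSAS} (together with Lemma~\ref{lemma:AStrue}), and the rate is then obtained by subtracting the integral forms, using an $N$-uniform Lipschitz bound on $\Psi_1,\Psi_2$, the $O(1/N)$ bounds on $g_1,g_2$, the $O(1/N)$ terminal discrepancy $\Lambda_1^N(T)-\Lambda_1(T)=Q_f^\Gamma/N$, and Gr\"onwall's lemma. The only cosmetic difference is that you reuse the constants $L_\Psi$, $C_g$ from the tube $\mathcal{C}$ constructed in that proof, whereas the paper introduces a fresh constant $L_1$ from the bounds \eqref{lam12bnd}--\eqref{RB1c1}; the substance is identical.
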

\begin{proof}
By Theorem~\ref{thm: NSAS} and Lemma \ref{lemma:AStrue},
if \eqref{ODELam1}--\eqref{ODELam2} has a solution $(\Lambda_1, \Lambda_2)$ on $[0, T]$, then there exists
$N_1 >0$ such that for each $N\geq N_1$, \eqref{ODELam12N} has a solution
$(\Lambda_1^N, \Lambda_2^N)$ on $[0, T]$ that satisfies
\eqref{lam12bnd}--\eqref{RB1c1}. 
 Then there exists a constant $L_1>0$ such that for all $N\ge N_1$ and for all $t\in [0,T]$, we have
\begin{align}
 & |\Psi_1(\Lambda_1^N) - \Psi_1(\Lambda_1)| \leq L_1|\Lambda_1^N - \Lambda_1 | ,
 \nonumber  \\
& |\Psi_2(\Lambda_1^N, \Lambda_2^N) - \Psi_2 (\Lambda_1, \Lambda_2)|
\leq L_1|\Lambda_1^N - \Lambda_1 | , \notag \\
& |g_1(N, \Lambda_1^N, \Lambda_2^N)| \leq  {L_1}/{N}, \quad
|g_2(N, \Lambda_1^N, \Lambda_2^N ) | \leq {L_1}/{N} . \notag
\end{align}

So combining \eqref{ODELam12N} and \eqref{ODELam1}--\eqref{ODELam2}, we obtain
\begin{align}
| \Lambda_1^N(t) - \Lambda_1(t) |
\leq\ & |Q^\Gamma_f    |/N
 +  \int_t^T L_1( | \Lambda_1^N(s) - \Lambda_1(s) |  + {1}/{N} ) ds ,
 \notag \\
 | \Lambda_2^N(t) - \Lambda_2(t) | \leq & \int_t^T
L_1 ( |\Lambda_1^N(s) - \Lambda_1(s) | + {1}/{N}  ) d s
\notag
\end{align}
for all $N\geq N_1$,  all $t\in [0, T]$.
By Gr\"{o}nwall's lemma,
 the desired result  follows.
\end{proof}

Let $(\Lambda_1^N, \Lambda_2^N)$ be given by  \eqref{ODELam12N}.
 We  further introduce the following ODE system
\begin{align}
& \begin{cases}
\dot{S}^N(t) =  \varphi_1 (\Lambda_1^N, \Lambda_2^N, S^N)
+ g_{01}(N, \Lambda_1^N, \Lambda_2^N, S^N) ,  \\
S^N(T) = 0 ,
\end{cases} \label{ODESN} \\
& \begin{cases}
\dot{r}^N(t)=  \varphi_2 (\Lambda_1^N, \Lambda_2^N, S^N)
+  g_{02} (N, \Lambda_1^N, \Lambda_2^N, S^N) ,  \\
 r^N(T) = 0 ,
\end{cases} \label{ODErN}
\end{align}
where
\begin{align}
\varphi_1(\Lambda_1^N, \Lambda_2^N, S^N )
\coloneqq &
( \Lambda_1^N +  \Lambda_2^N )  B (  \mathcal{R}_2(\Lambda_1^N, \Lambda_2^N) )^{-1}  \cdot   \notag \\
 & [ B^T S^N + B_1^T \Lambda_1^N D + B_0^T  ( \Lambda_1^N +  \Lambda_2^N
) D_0  ]
  - (A+G)^T S^N ,    \notag\\
 \varphi_2(\Lambda_1^N, \Lambda_2^N, S^N)
 \coloneqq &
 [S^{N T} B + D^T \Lambda_1^N B_1 + D_0^T ( \Lambda_1^N +  \Lambda_2^N ) B_0 ]
  (  \mathcal{R}_2(\Lambda_1^N, \Lambda_2^N) )^{-1}   \cdot \notag \\
 &   [ B^T S^N + B_1^T \Lambda_1^N D + B_0^T ( \Lambda_1^N
 +  \Lambda_2^N  ) D_0 ] \notag \\
& -  D^T \Lambda_1^N D -   D_0^T ( \Lambda_1^N + \Lambda_2^N ) D_0 ,
\notag
\end{align}
\begin{align}
 g_{01} (N, \Lambda_1^N, \Lambda_2^N, S^N) := &
 [ \Lambda_1^N + (1-1/N) \Lambda_2^N ]  B
   [ \mathcal{R}_2 (\Lambda_1^N, \Lambda_2^N) - (1/N) B_0^T \Lambda_2^N B_0 ]^{-1}  \cdot \notag \\
&  \Big\{ B^T S + B_1^T \Lambda_1^N D + B_0^T  [ \Lambda_1^N + (1-1/N) \Lambda_2^N  ] D_0  \Big\} \notag \\
& -   ( \Lambda_1^N +  \Lambda_2^N )  B (  \mathcal{R}_2(\Lambda_1^N, \Lambda_2^N) )^{-1}  \cdot \notag \\
 &  [ B^T S + B_1^T \Lambda_1^N D + B_0^T  ( \Lambda_1^N +  \Lambda_2^N  ) D_0  ] , \notag
\end{align}
\begin{align}
g_{02}(N, \Lambda_1^N, \Lambda_2^N , S^N) := &
  [S^{N T} B + D^T \Lambda_1^N B_1 + D_0^T ( \Lambda_1^N + (1-1/N) \Lambda_2^N ) B_0 ] \cdot  \notag \\
 &   [ \mathcal{R}_2 (\Lambda_1^N, \Lambda_2^N) - (1/N) B_0^T \Lambda_2^N
B_0 ]^{-1} \cdot \notag \\
 &   [ B^T S^N + B_1^T \Lambda_1^N D + B_0^T
 (\Lambda_1^N +  (1-1/N) \Lambda_2^N  ) D_0 ]  \notag \\
&  -  [S^{N T} B + D^T \Lambda_1^N B_1 + D_0^T ( \Lambda_1^N +  \Lambda_2^N ) B_0 ]
   ( \mathcal{R}_2(\Lambda_1^N, \Lambda_2^N))^{-1}   \cdot \notag \\
 &   [ B^T S^N + B_1^T \Lambda_1^N D + B_0^T (\Lambda_1^N
 +  \Lambda_2^N  ) D_0 ]
 + (1/N) D_0^T \Lambda_2^N D_0 .
\notag
\end{align}
The above ODEs are constructed by substituting \eqref{S submatrices} into
\eqref{ODE S} and writing  $ \mathbf{r}\coloneqq N  r^N $ in \eqref{ODE r}.

\begin{remark}
\label{rmk: sol S r}
If \eqref{ODELam1}--\eqref{ODELam2} has a solution $(\Lambda_1, \Lambda_2)$ on $[0,T]$, then the following system
\begin{align}
\label{ODESlim}
 \dot{S} = & \varphi_1( \Lambda_1, \Lambda_2, S) , \quad S(T) = 0 ,  \\
 \dot{r} = & \varphi_2 (\Lambda_1, \Lambda_2, S ) , \quad r(T) = 0 ,
\label{ODErlim}
\end{align}
 admits a unique solution $(S, r)$ on $[0, T]$.
\end{remark}

\begin{corollary}
\label{cor: sol S^N r^N}
If \eqref{ODELam1}--\eqref{ODELam2} has a solution $(\Lambda_1, \Lambda_2)$ on $[0, T]$, then there exists $N_1 >0$ such that  for all $N\geq N_1$,
\emph{(i)} the system \eqref{ODESN}--\eqref{ODErN} admits a unique solution $(S^N, r^N)$; \emph{(ii)} with $(S^N, r^N)$ determined in \emph{(i)},
$\mathbf{S}$ defined by \eqref{S submatrices}
and $\mathbf{r}\coloneqq Nr^N$ give a solution to \eqref{ODE S}--\eqref{ODE r}; and \emph{(iii)}
  $\sup_{t\in [0, T]} ( |S^N - S| + |r^N - r| ) = O(1/N)$, where $(S, r)$ is the  solution of \eqref{ODESlim}--\eqref{ODErlim}.
\end{corollary}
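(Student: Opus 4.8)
The plan is to prove the three assertions in order, leaning throughout on Corollary~\ref{cor: sol Lambda_1^N Lambda_2^N}: for $N\ge N_1$ the system \eqref{ODELam12N} has a solution $(\Lambda_1^N,\Lambda_2^N)$ obeying \eqref{lam12bnd}--\eqref{RB1c1}, with $\sup_{[0,T]}(|\Lambda_1^N-\Lambda_1|+|\Lambda_2^N-\Lambda_2|)=O(1/N)$. These furnish, uniformly in $N\ge N_1$ and $t\in[0,T]$: boundedness of $\Lambda_1^N,\Lambda_2^N$; the lower bounds $\mathcal R_1(\Lambda_1^N),\ \mathcal R_2(\Lambda_1^N,\Lambda_2^N)\ge c_1 I$, hence existence and uniform boundedness of $(\mathcal R_1)^{-1}$, $(\mathcal R_2)^{-1}$ and (for $N$ large) $[\mathcal R_2(\Lambda_1^N,\Lambda_2^N)-(1/N)B_0^T\Lambda_2^NB_0]^{-1}$. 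I also use the limit solution $(S,r)$ of \eqref{ODESlim}--\eqref{ODErlim} from Remark~\ref{rmk: sol S r}, which is continuous, hence bounded, on $[0,T]$.

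For \emph{(i)}, observe that with $(\Lambda_1^N,\Lambda_2^N)$ fixed, the right-hand side $\varphi_1+g_{01}$ of \eqref{ODESN} is affine in $S^N$ with continuous, uniformly bounded coefficients on $[0,T]$ (using the bounded inverses above). Thus \eqref{ODESN} is a linear ODE and has a unique solution $S^N$ on all of $[0,T]$. Substituting this $S^N$, the right-hand side $\varphi_2+g_{02}$ of \eqref{ODErN} becomes a known continuous function of $t$ alone, so $r^N$ is obtained by direct integration and is unique. For \emph{(ii)}, Lemma~\ref{lemma:P2Lam}(ii) gives a solution $\mathbf P$ of \eqref{ODE P} with the block form \eqref{P submatrices}, so $(\mathbf R+2\mathcal M_2(\mathbf P))^{-1}$ has the symmetric block form \eqref{inv R+2M_2(P)} with blocks \eqref{enform}--\eqref{hnform}. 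Inserting the block vector $\mathbf S$ of \eqref{S submatrices}, built from the $S^N$ of (i), into the linear ODE \eqref{ODE S}, and noting that $\mathbf A$, $\mathbf P$, $\widehat{\mathbf B}$, $\mathcal M_1(\mathbf P)$ and the inverse all respect this block structure, the right-hand side is again block-structured and each block reproduces exactly $\varphi_1+g_{01}$ of \eqref{ODESN}; this is precisely the identity by which \eqref{ODESN} was derived, read in reverse, so $\mathbf S$ solves \eqref{ODE S}. The same block computation applied to \eqref{ODE r} with $\mathbf r:=Nr^N$ shows $\mathbf r$ solves \eqref{ODE r}.

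For \emph{(iii)}, subtract \eqref{ODESlim} from \eqref{ODESN} and split
\[
\varphi_1(\Lambda_1^N,\Lambda_2^N,S^N)-\varphi_1(\Lambda_1,\Lambda_2,S)
=[\varphi_1(\Lambda_1^N,\Lambda_2^N,S^N)-\varphi_1(\Lambda_1^N,\Lambda_2^N,S)]
+[\varphi_1(\Lambda_1^N,\Lambda_2^N,S)-\varphi_1(\Lambda_1,\Lambda_2,S)].
\]
The first bracket is bounded by $C|S^N-S|$, since $\varphi_1$ is affine in its last argument with uniformly bounded coefficients; the second is $O(1/N)$ by local Lipschitz continuity of $\varphi_1$ in $(\Lambda_1,\Lambda_2)$ along the bounded trajectory (using $\mathcal R_2\ge c_1 I$) together with the $O(1/N)$ convergence of $(\Lambda_1^N,\Lambda_2^N)$. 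The perturbation satisfies $|g_{01}|\le (C/N)(1+|S^N|)$, because the coefficient difference $\Lambda_1^N+(1-1/N)\Lambda_2^N-(\Lambda_1^N+\Lambda_2^N)=-(1/N)\Lambda_2^N$ and the resolvent difference $[\mathcal R_2(\Lambda_1^N,\Lambda_2^N)-(1/N)B_0^T\Lambda_2^NB_0]^{-1}-(\mathcal R_2(\Lambda_1^N,\Lambda_2^N))^{-1}$ are both $O(1/N)$ times uniformly bounded matrices. Writing $e(t):=|S^N(t)-S(t)|$ and using $|S^N|\le e+\|S\|_\infty$, the backward integral inequality reads $e(t)\le\int_t^T[(C+C/N)e(\tau)+C''/N]\,d\tau$, and Gr\"onwall's lemma gives $\sup_{[0,T]}e=O(1/N)$, which also shows $S^N$ is uniformly bounded. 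Finally, since the right-hand sides of \eqref{ODErlim} and \eqref{ODErN} do not involve $r$, direct integration gives $|r^N(t)-r(t)|\le\int_t^T(|\varphi_2(\Lambda_1^N,\Lambda_2^N,S^N)-\varphi_2(\Lambda_1,\Lambda_2,S)|+|g_{02}|)\,d\tau$; local Lipschitz continuity of $\varphi_2$ (quadratic in $S$, but now evaluated on bounded trajectories) makes the first term $O(1/N)$, and $|g_{02}|=O(1/N)$ by the same coefficient- and resolvent-difference bookkeeping plus its explicit $(1/N)D_0^T\Lambda_2^ND_0$ term, whence $\sup_{[0,T]}|r^N-r|=O(1/N)$.

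The main obstacle is the estimate $|g_{01}|,|g_{02}|=O(1/N)$ uniformly in $N$: one must control the resolvent differences and the affine/quadratic dependence on $S^N$ so that the Gr\"onwall constant remains $N$-independent, with the boundedness of $S^N$ emerging from the estimate rather than being assumed beforehand. Parts (i) and (ii) are comparatively routine, resting on linear-ODE theory and the reversibility of the block reduction.
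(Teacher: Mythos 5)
Your proposal is correct and follows essentially the same route as the paper: part (i) by linear ODE theory, part (ii) by direct block-structured substitution into \eqref{ODE S}--\eqref{ODE r}, and part (iii) by the Lipschitz-plus-Gr\"onwall argument that the paper defers to "similar steps as in the proof of Corollary~\ref{cor: sol Lambda_1^N Lambda_2^N}." Your explicit bookkeeping of the $O(1/N)$ resolvent and coefficient differences in $g_{01}$, $g_{02}$, and the observation that the uniform boundedness of $S^N$ emerges from the Gr\"onwall estimate itself, supply exactly the details the paper omits.
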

\begin{proof}
(i)  If \eqref{ODELam1}--\eqref{ODELam2} has a solution, it  follows from
Corollary~\ref{cor: sol Lambda_1^N Lambda_2^N} that there exists $ N_1>0$
such that for all $N\geq N_1$,
\eqref{ODELam12N} has a unique solution $(\Lambda_1^N, \Lambda_2^N)$. Substituting  $(\Lambda_1^N, \Lambda_2^N)$ into
\eqref{ODESN}--\eqref{ODErN} gives a first order linear ODE system of $(S^N, r^N)$ that admits a unique solution.

(ii) By substituting  $\mathbf{S}$ and $\mathbf{r}$ defined as above into
\eqref{ODE S}--\eqref{ODE r}, we may directly verify the ODE system \eqref{ODE S}--\eqref{ODE r}.

 (iii) The proof follows  similar steps as in the proof of Corollary~\ref{cor: sol Lambda_1^N Lambda_2^N}, and we omit the details.
\end{proof}

\subsection{Solvability of the limiting ODE system}
\label{sec:sub:solLam}

Determining the solvability of the ODE system
\eqref{ODELam1}--\eqref{ODELam2} on $[0,T]$ is  an interesting problem.
For this subsection, the analysis is restricted to the case $Q\ge 0, Q_f\ge 0$ while
the matrix $R$ may be indefinite.

Define $\Lambda_3  \coloneqq \Lambda_1 + \Lambda_2$.  Adding up both sides of
\eqref{ODELam1} and \eqref{ODELam2}, we   obtain the  Riccati ODE:
\begin{align}
& \begin{cases}
\dot {\Lambda}_3(t)  = \Psi_3(\Lambda_1, \Lambda_3)  , \
 R + B_1^T \Lambda_1(t) B_1 + B_0^T \Lambda_3(t) B_0  > 0 , \
\forall t \in [0, T], \\
 \Lambda_3(T)  = Q_{3f} , \\
\end{cases}  \label{ODELam3}
\end{align}
where $\Psi_3$ and $Q_{3f}$ are defined as
\begin{align}
& \Psi_3(\Lambda_1, \Lambda_3) =  \Lambda_3 B ( R + B_1^T \Lambda_1 B_1
+ B_0^T \Lambda_3 B_0  )^{-1} B^T \Lambda_3 \notag\\
&\qquad\qquad\qquad - \Lambda_3 (A + G) - (A + G )^T \Lambda_3  - Q_3 , \notag    \\
& Q_3
= (I-\Gamma)^T Q (I-\Gamma) , \quad
  Q_{3f} =(I-\Gamma_f)^T Q_f (I-\Gamma_f)  . \notag
\end{align}
Since the transformation $(\Lambda_1, \Lambda_2) \to (\Lambda_1, \Lambda_3)$ is one-to-one,
\eqref{ODELam1}--\eqref{ODELam2} has a unique solution on $[0,T]$ if and only if  the system consisting of \eqref{ODELam1} and \eqref{ODELam3} has a unique solution on $[0,T]$.

We consider existence and uniqueness of the solution of \eqref{ODELam1} and \eqref{ODELam3}. Denote ${\mathcal S}_{1}^{n_1}=\{Z: Z\in {\mathcal S}^{n_1}, Z>0\}$.
According to \cite[Theorem 4.6]{CLZ1998}, under the condition $Q\geq 0$ and $Q_f \geq 0$, the Riccati equation \eqref{ODELam1} admits a solution on $[0,T]$ if and only if there exists a function $K\in C([0,T], {\mathcal S}^{n_1}_1)$ such that
$R + B_1^T \widetilde\Lambda_1 B_1 \geq K$,
where $\widetilde\Lambda_1$ is the unique solution of the  standard Riccati ODE with $K$ taken as a parameter:
\begin{align}
\begin{cases}
\dot{\widetilde\Lambda}_1 (t) = \widetilde\Lambda_1 B K^{-1} B^T
\widetilde{\Lambda}_1 - \widetilde{\Lambda}_1 A - A^T \widetilde{\Lambda}_1 - Q, \\
\widetilde{\Lambda}_1(T) = Q_f .
\end{cases}
\label{ODEtildeLam1}
\end{align}
According to \cite{W1968},
\eqref{ODEtildeLam1} with positive definite $K$ has a unique solution on $[0, T]$.

Once $\Lambda_1$ is solved from \eqref{ODELam1}, we continue to solve   \eqref{ODELam3} as a Riccati ODE with the time-varying coefficient $R+B_1^T\Lambda_1 B_1$ to determine
$\Lambda_3$.
By \cite[Theorem 4.6]{CLZ1998}, for $Q\geq 0$ and $Q_{f}\geq 0$, \eqref{ODELam3} admits a solution if and only if there exists $K\in C([0,T], {\mathcal S}^{n_1}_1)$ such that
\begin{align}
R + B_1^T \Lambda_1 B_1 + B_0^T \widetilde\Lambda_3 B_0 \geq K , \notag
\end{align}
where $\widetilde{\Lambda}_3$ is the unique solution of the following standard Riccati ODE:
\begin{align}
\begin{cases}
 \dot {\widetilde \Lambda}_3(t)
 = \widetilde \Lambda_3B K^{-1} B^T\widetilde \Lambda_3 - \widetilde \Lambda_3 (A + G) - (A^T + G^T) \widetilde \Lambda_3
  - Q_3 ,    \\
 \widetilde{\Lambda}_3(T) =  Q_{3f} .
\end{cases}  \label{ODE tilde Lambda_3}
\end{align}

\subsection{Interpretation of the limiting Riccati ODEs}

We relate the  Riccati equations
\eqref{ODELam1}--\eqref{ODELam2} to  optimal control problems in a
low-dimensional space.
Consider a single-agent optimal control problem with state $X_1$ that satisfies
\begin{align}
d X_1 = (A X_1 + B u_1) dt + B_1 u_1  d W_1 ,  \label{inverse X_1}
\end{align}
where $X_1(0)$ is given.
The agent chooses the control $u_1$ to minimize the cost
\begin{align}
J_1(u_1) = \mathbb{E} \bigg[ \int_0^T ([X_1]_Q^2 + [u_1]_R^2 )dt + [X_1(T) ]_{Q_f}^2 \bigg] . \notag
\end{align}
If \eqref{ODELam1} admits a solution $\Lambda_1$, then the optimal control is
\begin{align}
u_1(t) = ( R + B_1^T \Lambda_1(t) B_1  )^{-1} B^T \Lambda_1(t) X_1(t)
. \notag
\end{align}

With $\Lambda_1$ obtained by solving \eqref{ODELam1}, we consider another
single-agent optimal control problem with state dynamics
\begin{align}
d X_2 = [ (A + G) X_2 + B u_2  ] dt + B_0 u_2  d W_0 ,
\label{inverse X_2}
\end{align}
and the agent chooses  $u_2$ to minimize the cost
\begin{align}
J_2(u_2) = \mathbb{E} \bigg[ \int_0^T( [X_2]_{Q_3}^2
+ [u_2]_{R + B_1^T \Lambda_1 B_1}^2) dt
+ [X_2(T)]_{Q_{3f}}^2 \bigg] . \notag
\end{align}
If \eqref{ODELam3} admits a solution $\Lambda_3$, then the optimal control $u_2$ is given by
\begin{align}
u_2(t) = ( R + B_1^T \Lambda_1(t) B_1 + B_0^T \Lambda_3(t) B_0 )^{-1}
B^T \Lambda_3(t) X_2(t)   . \notag
\end{align}

\section{Closed-loop dynamics and mean field limit}
\label{sec: cl dynamics}

We introduce the following assumptions:
\begin{assumption}
\label{assm: sol Lambda Lambda_1}
The ODE system \eqref{ODELam1}--\eqref{ODELam2} has a solution $(\Lambda_1 , \Lambda_2)$ on $[0, T]$.
\end{assumption}

Let $\{X_i(0), 1\le i\le N\}$  be the initial states  of the $N$ agents.
Denote the covariance matrix $\Sigma^{i}_0:=\mathrm{Cov}(X_i(0), X_i(0))$, $1\leq i \leq N$.

\begin{assumption} \label{assumption:mean} The initial states $\{X_i(0), i\ge 0\}$ are independent.
There exist a mean $\mu_0\in \mathbb{R}^n$ and a constant $C_\Sigma$, both independent of $N$, such that
$\mathbb{E}X_i(0) = \mu_0$ and $| \Sigma_0^{i} | \leq C_\Sigma$
for all $i$.
\end{assumption}

If \eqref{ODELam12N} has a solution $(\Lambda_1^N, \Lambda_2^N)$ for a finite $N$, by Lemma \ref{lemma:P2Lam} (ii), we determine $\mathbf{P}$   in
\eqref{U 2} by \eqref{P submatrices} with $  \Pi_1^N=  \Lambda_1^N $ and $ \Pi_2^N =  \Lambda_2^N/N $. Then we obtain the optimal  control
$U^o=(u_1^T, \cdots, u_N^T)^T$, where
\begin{align}
u_i = - \Theta^N X_i - \Theta_1^N X^{(N)} -  \Theta_2^N , \quad
 1\leq i \leq N ,
\label{centralized u_i}
\end{align}
and
\begin{align}
& \Theta^N =  ( H^N - E^N) B^T ( \Lambda_1^N - {\Lambda_2^N}/{N} ) , \nonumber\\
& \Theta_1^N =  N E^N B^T \Lambda_1^N + (H^N + (N-2) E^N) B^T \Lambda_2^N   , \notag \\
 & \Theta_2^N =
( H^N + (N-1) E^N )
  [ B^T S^N +   B_1^T \Lambda_1^N D  +  B_0^T(\Lambda_1^N + (1-1/N) \Lambda_2^N) D_0 ] . \notag
\end{align}
The control $U^o=(u_1^T, \cdots, u_N^T)^T$ given by \eqref{centralized u_i} is called \emph{centralized}, as each agent $\mathcal{A}_i$ needs the state information of other agents and also the population size $N$.

Denote the closed-loop dynamics of $X_i$ and $X^{(N)}$ by
\begin{align}
\label{CL X_i}
d X_i = & [ ( A - B \Theta^N )X_i + (G-B \Theta_1^N) X^{(N)} - B \Theta_2^N ] dt
 \\
 &  + [ D - B_1 ( \Theta^N X_i + \Theta_1^N X^{(N)} + \Theta_2^N ) ] d W_i   \notag \\
&  + [ D_0 - B_0 ((\Theta^N + \Theta_1^N) X^{(N)} + \Theta_2^N) ] d W_0 ,
\quad 1\leq i \leq N ,  \notag
\end{align}
\begin{align}
\label{CL X^N}
d X^{(N)} = & [  ( A + G - B ( \Theta^N + \Theta_1^N ) ) X^{(N)} - B \Theta_2^N ] dt  \\
 &  + (1/N) \sum_{i=1}^N [ D - B_1 ( \Theta^N X_i + \Theta_1^N X^{(N)} + \Theta_2^N ) ] d W_i   \notag \\
 &  + [  D_0 - B_0 ( \Theta^N + \Theta_1^N) X^{(N)}  - B_0 \Theta_2^N    ] d W_0 .  \notag
\end{align}

Let $(\Lambda_1(t), \Lambda_2(t))$ be given by Assumption \ref{assm: sol Lambda Lambda_1} and denote matrix-valued functions on $[0,T]$:
\begin{align}
& H   = (  \mathcal{R}_1(\Lambda_1) )^{-1},
\quad
 H_1 = (  \mathcal{R}_2(\Lambda_1,  \Lambda_2) )^{-1}, \quad
\widehat{E} = H_1 - H, \notag \\
&  \Theta = H B^T \Lambda_1 , \quad
\Theta_1  = \widehat{E}B^T( \Lambda_1 + \Lambda_2 ) + H B^T \Lambda_2 ,
 \nonumber \\
& \Theta_2  = H_1 \left[ B^T S +  B_1^T \Lambda_1 D  +  B_0^T(\Lambda_1
+ \Lambda_2 ) D_0 \right] .
   \notag
\end{align}
Denote the mean field limit of \eqref{CL X^N}:
\begin{align}
d \overline X  =\ & [ (A+G - B(\Theta + \Theta_1)) \overline{X} - B \Theta_2 ] dt  \label{CL bar X} \\
& + [ D_0 - B_0 \Theta_2 - B_0(\Theta + \Theta_1 ) \overline{X}   ] d W_0
,
\notag
\end{align}
with the initial condition $\overline{X}(0)=\mu_0\in \mathbb{R}^n$.
We proceed to check the approximation error between $X^{(N)}$ and $\overline X$.

\begin{lemma}
\label{lm: Theta^N-Theta}
Under Assumption~\ref{assm: sol Lambda Lambda_1}, we have
$\sup_{t\in[0, T]} ( |\Theta^N - \Theta| + |\Theta_1^N - \Theta_1|
+ |\Theta_2^N - \Theta_2| ) = O(1/N)$.
\end{lemma}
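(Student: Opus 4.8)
The plan is to reduce everything to the already-established $O(1/N)$ convergence of $(\Lambda_1^N,\Lambda_2^N)$ from Corollary~\ref{cor: sol Lambda_1^N Lambda_2^N} and of $S^N$ from Corollary~\ref{cor: sol S^N r^N}, together with the uniform nonsingularity guaranteed by \eqref{Rc1I}--\eqref{RB1c1}. The first step is to record closed forms for the coefficient blocks that carry explicit factors of $N$. Using \eqref{enform}--\eqref{hnform}, I set $G^N := N E^N = [\mathcal{R}_2(\Lambda_1^N,\Lambda_2^N)-(1/N)B_0^T\Lambda_2^N B_0]^{-1}-(\mathcal{R}_1(\Lambda_1^N))^{-1}$ and note the identities $H^N-E^N=(\mathcal{R}_1(\Lambda_1^N))^{-1}$, $H^N+(N-2)E^N=(\mathcal{R}_1(\Lambda_1^N))^{-1}+(1-1/N)G^N$, and $H^N+(N-1)E^N=(\mathcal{R}_1(\Lambda_1^N))^{-1}+G^N$. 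Thus each of the three gains $\Theta^N,\Theta_1^N,\Theta_2^N$ is a finite sum of products built from $(\mathcal{R}_1(\Lambda_1^N))^{-1}$, $G^N$, $E^N=G^N/N$, and the uniformly bounded data $\Lambda_1^N,\Lambda_2^N,S^N$ and fixed matrices $B,B_0,B_1,D,D_0$.

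The second step establishes $O(1/N)$ convergence of these inverse blocks. Since $\mathcal{R}_1,\mathcal{R}_2$ are affine in their arguments and $(\Lambda_1^N,\Lambda_2^N)\to(\Lambda_1,\Lambda_2)$ at rate $O(1/N)$, the matrices $\mathcal{R}_1(\Lambda_1^N)$ and $\mathcal{R}_2(\Lambda_1^N,\Lambda_2^N)-(1/N)B_0^T\Lambda_2^N B_0$ converge to $\mathcal{R}_1(\Lambda_1)$ and $\mathcal{R}_2(\Lambda_1,\Lambda_2)$ at rate $O(1/N)$; by \eqref{Rc1I}--\eqref{RB1c1} all of them are bounded below by a fixed positive multiple of $I$ for $N$ large, so matrix inversion is Lipschitz on the relevant compact set, giving $(\mathcal{R}_1(\Lambda_1^N))^{-1}\to H$ and $G^N\to H_1-H=\widehat{E}$ at rate $O(1/N)$, while $E^N=G^N/N=O(1/N)$. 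Consequently $H^N=(\mathcal{R}_1(\Lambda_1^N))^{-1}+E^N\to H$, and the three combined blocks above converge to $H$, $\widehat{E}+H=H_1$, and $\widehat{E}+H=H_1$, respectively, each at rate $O(1/N)$.

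The third step substitutes these limits into the definitions of $\Theta^N,\Theta_1^N,\Theta_2^N$ and uses the elementary fact that if $M^N\to M$ and $L^N\to L$ at rate $O(1/N)$ with uniform bounds, then $M^N L^N\to ML$ at rate $O(1/N)$ via $M^N L^N-ML=M^N(L^N-L)+(M^N-M)L$. For $\Theta^N=(H^N-E^N)B^T(\Lambda_1^N-\Lambda_2^N/N)$ the factors converge to $H$ and $B^T\Lambda_1$, giving $\Theta$. For $\Theta_1^N=G^N B^T\Lambda_1^N+(H^N+(N-2)E^N)B^T\Lambda_2^N$ the limit is $\widehat{E}B^T\Lambda_1+H_1 B^T\Lambda_2$, which equals $\widehat{E}B^T(\Lambda_1+\Lambda_2)+H B^T\Lambda_2=\Theta_1$ because $H_1=\widehat{E}+H$. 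For $\Theta_2^N$ the leading block converges to $H_1$ and the bracket to $B^T S+B_1^T\Lambda_1 D+B_0^T(\Lambda_1+\Lambda_2)D_0$, so $\Theta_2^N\to\Theta_2$. All estimates hold uniformly on $[0,T]$.

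The main obstacle is the bookkeeping of the explicit $N$-factors: $E^N$ is itself $O(1/N)$ and would naively drop out, yet it appears multiplied by $N$ (or $N-1$, $N-2$) in $\Theta_1^N$ and $\Theta_2^N$, where it contributes a nonvanishing limit. The decisive move is to treat $N E^N$ as a single object $G^N$ and read off its limit directly from the closed form \eqref{enform}; once this regrouping is in place, every remaining estimate is a routine application of the Lipschitz property of inversion on a uniformly nonsingular set and the product rule for $O(1/N)$-convergent bounded sequences.
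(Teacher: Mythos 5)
Your argument is correct and is essentially the fully worked-out version of the paper's one-line proof, which simply cites Corollary~\ref{cor: sol Lambda_1^N Lambda_2^N}: the key step in both is to reduce the gain matrices to the $O(1/N)$-convergent rescaled quantities, and your regrouping of $NE^N$ via \eqref{enform}, the Lipschitz inversion on the uniformly positive-definite set from \eqref{Rc1I}--\eqref{RB1c1}, and the check that the limits recombine into $\Theta$, $\Theta_1$, $\Theta_2$ are exactly the omitted details. You also correctly note that Corollary~\ref{cor: sol S^N r^N} is needed for the $S^N$ term in $\Theta_2^N$, which the paper's citation leaves implicit.
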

\begin{proof}
It follows  from Corollary~\ref{cor: sol Lambda_1^N Lambda_2^N}.
\end{proof}

\begin{lemma}
\label{lm: ub X_i}
Under Assumptions~\ref{assm: sol Lambda Lambda_1} and \ref{assumption:mean},   there exist $C>0$ and $N_0 >0$ such that
$\sup_{i \geq N_0 , 0 \leq t\leq T} \mathbb{E} |X_i(t)|^2 \leq C$, where $X_i(t)$ is given by \eqref{CL X_i}.
\end{lemma}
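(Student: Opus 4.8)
The plan is to obtain the bound through second-moment estimates for the closed-loop system \eqref{CL X_i}--\eqref{CL X^N}, combined with Gr\"onwall's inequality; the only nonroutine feature is the mean field coupling through $X^{(N)}$. First I would record two preliminary facts. By Lemma~\ref{lm: Theta^N-Theta} the feedback gains $\Theta^N,\Theta_1^N,\Theta_2^N$ converge uniformly on $[0,T]$ to the continuous limits $\Theta,\Theta_1,\Theta_2$, so there exist $N_1>0$ and a constant independent of $N$ bounding $|\Theta^N|+|\Theta_1^N|+|\Theta_2^N|$ for all $N\ge N_1$ and $t\in[0,T]$. Moreover, Assumption~\ref{assumption:mean} gives $\mathbb{E}|X_i(0)|^2=|\mu_0|^2+\Tr\Sigma_0^{i}$, which is bounded uniformly in $i$ and $N$.

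Next I apply It\^o's formula to $|X_i(t)|^2$ and to $|X^{(N)}(t)|^2$ and take expectations. Write $m_i(t)=\mathbb{E}|X_i(t)|^2$, $\bar m(t)=(1/N)\sum_{i=1}^N m_i(t)$, and $m(t)=\mathbb{E}|X^{(N)}(t)|^2$. In \eqref{CL X_i} the drift, the $W_i$-diffusion, and the $W_0$-diffusion all depend only on $X_i$ and $X^{(N)}$; using the uniform gain bound and $2a^{T}b\le|a|^2+|b|^2$, each resulting integrand is dominated by $C(1+m_i+m)$, so that $m_i(t)\le m_i(0)+C\int_0^t(1+m_i(s)+m(s))\,ds$. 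The crucial point is the individual-noise term in \eqref{CL X^N}, namely $(1/N)\sum_i[D-B_1(\Theta^N X_i+\Theta_1^N X^{(N)}+\Theta_2^N)]\,dW_i$: by independence of $\{W_i\}$ its contribution to $\tfrac{d}{dt}\mathbb{E}|X^{(N)}|^2$ is $(1/N^2)\sum_i\mathbb{E}|\cdot|^2\le (C/N)(1+\bar m+m)$, which is $O(1/N)$; hence for $N\ge N_1$ one obtains $m(t)\le m(0)+C\int_0^t(1+m(s)+\bar m(s))\,ds$.

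To close the coupling I would use the convexity inequality $m(t)=\mathbb{E}|X^{(N)}(t)|^2\le(1/N)\sum_i\mathbb{E}|X_i(t)|^2=\bar m(t)$. Averaging the individual estimate over $i$ gives
\[
\bar m(t)\le \bar m(0)+C\int_0^t\bigl(1+\bar m(s)+m(s)\bigr)\,ds\le \bar m(0)+C\int_0^t\bigl(1+2\bar m(s)\bigr)\,ds,
\]
and Gr\"onwall's lemma yields a uniform bound $\bar m(t)\le M$ on $[0,T]$ with $M$ independent of $N$, since $\bar m(0)$ is uniformly bounded. Consequently $m(t)\le M$ as well. Substituting this into the individual inequality gives $m_i(t)\le m_i(0)+C\int_0^t(1+M+m_i(s))\,ds$, and a final application of Gr\"onwall produces $m_i(t)\le\bigl(m_i(0)+C(1+M)T\bigr)e^{CT}=:C$, uniform in $i$, in $t\in[0,T]$, and in $N\ge N_0:=N_1$, which is the assertion.

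I expect the main obstacle to be the apparent circularity: the estimate for an individual $\mathbb{E}|X_i|^2$ involves $\mathbb{E}|X^{(N)}|^2$, whose own estimate a priori involves the individual second moments. This is resolved by two facts working together — the independent individual noises enter the evolution of $X^{(N)}$ only at order $1/N$, and the convexity bound $\mathbb{E}|X^{(N)}|^2\le(1/N)\sum_i\mathbb{E}|X_i|^2$ allows one to close a self-contained Gr\"onwall inequality for the averaged moment $\bar m$ before returning to a single agent. The remaining computations (the It\^o expansions and the elementary quadratic bounds on the integrands) are routine given the uniform boundedness of the gains.
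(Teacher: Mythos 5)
Your argument is correct and follows essentially the same route as the paper's proof: It\^o's formula on $|X_i|^2$, uniform boundedness of the gains from Lemma~\ref{lm: Theta^N-Theta}, the convexity bound $\mathbb{E}|X^{(N)}|^2\le (1/N)\sum_i\mathbb{E}|X_i|^2$ to close the coupling, and Gr\"onwall applied to an aggregate quantity (you use the average $\bar m$, the paper uses the maximum $\max_k\mathbb{E}|X_k(t)|^2$, which is an immaterial difference). Your separate It\^o expansion of $|X^{(N)}|^2$ is superfluous, since the convexity inequality alone suffices to control $\mathbb{E}|X^{(N)}|^2$, but it does no harm.
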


\begin{proof}
By Assumption \ref{assumption:mean},
we have  $\sup_{i \geq 1 , 0 \leq t\leq T} \mathbb{E} |X_i(0)|^2 \leq C_0$ for some fixed constant $C_0$.
Applying It\^{o}'s formula to $|X_i|^2$ gives
\begin{align}
 \mathbb{E}|X_i (t)|^2 = &\ \mathbb{E} |X_i(0)|^2  + \mathbb{E} \int_0^t
2
\langle X_i,  ( A - B \Theta^N )X_i + (G-B \Theta_1^N) X^{(N)} - B \Theta_2^N  \rangle d s    \notag \\
 & +  \mathbb{E} \int_0^t  | D - B_1 ( \Theta^N X_i + \Theta_1^N X^{(N)} + \Theta_2^N ) |^2  ds  \notag \\
 & + \mathbb{E} \int_0^t  | D_0 - B_0 ((\Theta^N + \Theta_1^N) X^{(N)} + \Theta_2^N) |^2  ds  . \notag
\end{align}
By Lemma~\ref{lm: Theta^N-Theta}, $(\Theta^N(t), \Theta_1^N(t), \Theta_2^N(t))$ is uniformly bounded on $[0,T]$ for all large $N$. So
there exist $N_0>0$ and constants $C_1$, $C_2$ and $C_3$ such that $N\geq N_0$ implies
 \begin{align}
\mathbb{E} |X_i(t)|^2 \leq \mathbb{E}|X_i(0)|^2 + C_1 + C_2 \int_0^t \mathbb{E}|X_i(s)|^2 ds
+ C_3 \int_0^t \mathbb{E}|X^{(N)}(s)|^2 ds  \notag
\end{align}
for all $t\in [0,T]$ and all $1\le i\le N$.
Denote $\alpha^N_t = \max_{1\leq k \leq N} \mathbb{E} |X_k(t)|^2$. Note
that
$\mathbb{E} |X^{(N)}(t)|^2
 \le  (1/N) \sum_{i=1}^N \mathbb{E} |X_i(t)|^2 $.
It then follows that for any $1\le i\le N$,
\begin{align}
\mathbb{E}|X_i(t)|^2 \leq \max_{1\leq k \leq N} \mathbb{E}|X_k(0)|^2 + C_1 + C_2 \int_0^t \alpha^N_s ds + C_3 \int_0^t \alpha^N_s ds , \notag
\end{align}
and  therefore
\begin{align}
\alpha^N_t \leq  C_0 + C_1 + (C_2 + C_3)
\int_0^t \alpha^N_s ds . \notag
\end{align}
Gr\"{o}nwall's lemma implies that
$\alpha^N_t \leq C$ for all $t\in [0, T]$ and all $N\geq N_0$, where
the constant $C$ depends only on $C_0$, $C_1$, $C_2$, $C_3$ and $T$.
\end{proof}

\begin{proposition}
\label{prop: X^N-bar X}

Under Assumptions~\ref{assm: sol Lambda Lambda_1} and  \ref{assumption:mean},
for \eqref{CL X^N}--\eqref{CL bar X}
it holds  that
\begin{align}
\sup_{t\in [0, T]} \mathbb{E} | X^{(N)}(t) - \overline{X}(t) |^2 =
 O(  1/N ) .
\label{X^N-bar X}
\end{align}
\end{proposition}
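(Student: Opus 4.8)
The plan is to set $Y^{(N)} \coloneqq X^{(N)} - \overline{X}$ and derive an integral inequality for $\mathbb{E}|Y^{(N)}(t)|^2$ that can be closed by Gr\"onwall's lemma. Subtracting \eqref{CL bar X} from \eqref{CL X^N}, the drift of $Y^{(N)}$ splits into a \emph{matched} part $(A+G-B(\Theta^N+\Theta_1^N))Y^{(N)}$ proportional to $Y^{(N)}$ and a \emph{mismatch} part collecting the coefficient differences $\Theta^N-\Theta$, $\Theta_1^N-\Theta_1$, $\Theta_2^N-\Theta_2$ multiplied by $\overline{X}$ or by constants; by Lemma~\ref{lm: Theta^N-Theta} each such difference is $O(1/N)$. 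The common-noise ($W_0$) diffusion of $Y^{(N)}$ admits the same matched/mismatch decomposition. The idiosyncratic-noise part of $dX^{(N)}$, namely $(1/N)\sum_{i=1}^N[D-B_1(\Theta^N X_i+\Theta_1^N X^{(N)}+\Theta_2^N)]dW_i$, has no counterpart in $d\overline{X}$ and is treated separately. For the initial condition, Assumption~\ref{assumption:mean} and the independence of $\{X_i(0)\}$ give $\mathbb{E}|Y^{(N)}(0)|^2 = \mathbb{E}|(1/N)\sum_i(X_i(0)-\mu_0)|^2 = (1/N^2)\sum_i\mathbb{E}|X_i(0)-\mu_0|^2 = O(1/N)$, since the cross terms vanish.

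Next I would apply It\^o's formula to $|Y^{(N)}|^2$ and take expectations; the stochastic integrals have zero mean because the integrands are square-integrable, which follows from the uniform bound $\sup_{1\le i\le N,\,t}\mathbb{E}|X_i(t)|^2\le C$ of Lemma~\ref{lm: ub X_i}, the resulting bound $\mathbb{E}|X^{(N)}(t)|^2\le(1/N)\sum_i\mathbb{E}|X_i(t)|^2\le C$ (by convexity of $|\cdot|^2$), and the standard $L^2$ bound $\sup_t\mathbb{E}|\overline{X}(t)|^2<\infty$ for the linear SDE \eqref{CL bar X}. The inner product of $Y^{(N)}$ with the drift difference is handled by Young's inequality: the matched part yields $\le C|Y^{(N)}|^2$, while each mismatch term, being a product of an $O(1/N)$ factor with $\overline{X}$ or a constant, contributes at most $|Y^{(N)}|^2 + O(1/N^2)(1+|\overline{X}|^2)$. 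The squared $W_0$-diffusion term is estimated the same way via $(a+b+c)^2\le 3(a^2+b^2+c^2)$, giving $\le C|Y^{(N)}|^2 + O(1/N^2)(1+|\overline{X}|^2)$. After taking expectations and using $\sup_t\mathbb{E}|\overline{X}(t)|^2<\infty$, all of these produce $C\int_0^t\mathbb{E}|Y^{(N)}(s)|^2\,ds + O(1/N^2)$.

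The decisive term is the quadratic variation of the idiosyncratic noise. Since $\{W_i\}$ are independent, it contributes $(1/N^2)\sum_{i=1}^N\mathbb{E}|D-B_1(\Theta^N X_i+\Theta_1^N X^{(N)}+\Theta_2^N)|^2\,dt$ after taking expectation. Bounding each summand by $C(1+\mathbb{E}|X_i|^2+\mathbb{E}|X^{(N)}|^2)$ and invoking Lemma~\ref{lm: ub X_i} gives $(1/N^2)\cdot N\cdot C = O(1/N)$. This averaging of $N$ independent noises, each scaled by $1/N$, is exactly where the $O(1/N)$ rate originates, and it is the one step that genuinely uses the mean-field structure rather than routine Lipschitz estimates; it is the main point requiring care, since one must retain the factor $1/N^2$ in front of the sum and use the \emph{uniform-in-$i$} second-moment bound.

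Collecting the estimates yields
\[
\mathbb{E}|Y^{(N)}(t)|^2 \le \frac{C_1}{N} + C_2\int_0^t \mathbb{E}|Y^{(N)}(s)|^2\,ds, \qquad t\in[0,T],
\]
for constants $C_1,C_2$ independent of $N$ and $t$. Gr\"onwall's lemma then gives $\sup_{t\in[0,T]}\mathbb{E}|Y^{(N)}(t)|^2 \le (C_1/N)\exp(C_2 T) = O(1/N)$, which is \eqref{X^N-bar X}.
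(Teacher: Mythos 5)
Your proof is correct and follows essentially the same route as the paper's: subtract the two SDEs, apply It\^o's formula to $|X^{(N)}-\overline{X}|^2$, control the coefficient mismatches via Lemma~\ref{lm: Theta^N-Theta}, extract the $O(1/N)$ rate from the $(1/N^2)\cdot N$ quadratic variation of the idiosyncratic noises using the uniform bound of Lemma~\ref{lm: ub X_i}, and close with Gr\"onwall. Your explicit computation of $\mathbb{E}|X^{(N)}(0)-\overline{X}(0)|^2=O(1/N)$ from independence of the initial states is a detail the paper leaves implicit, but the argument is the same.
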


\begin{proof}
Taking the difference between \eqref{CL X^N} and \eqref{CL bar X} gives
\begin{align}
d ( X^{(N)} - \overline{X} ) = & [ (A+G - B(\Theta + \Theta_1)) (X^{(N)} - \overline{X} )\\
&-B(\Theta^N + \Theta_1^N - \Theta - \Theta_1 ) X^{(N)}
  - B(\Theta_2^N - \Theta_2) ] dt \notag \\
& - [  B_0(\Theta + \Theta_1 ) (X^{(N)} - \overline{X} ) +B_0(\Theta^N + \Theta_1^N - \Theta - \Theta_1 )X^{(N)} \nonumber \\
& + B_0(\Theta_2^N - \Theta_2)] d W_0 \notag \\
&  + \frac{1}{N} \sum_{i=1}^N [ D - B_1 ( \Theta^N X_i  + \Theta_1^N \overline{X} + \Theta_2^N )  ]  d W_i . \notag
\end{align}
We apply It\^{o}'s formula to $|X^{(N)} - \overline{X}|^2$ to get
\begin{align}
 &\mathbb{E} |X^{(N)}(t) - \overline{X}(t) |^2 \nonumber\\
 = \ & \mathbb{E} |X^{(N)}(0) - \overline{X}(0)|^2 +  \frac{1}{N^2} \sum_{i=1}^N \int_0^t
 \mathbb{E} | D - B_1 ( \Theta^N X_i  + \Theta_1^N \overline{X} + \Theta_2^N )  |^2  d s  \notag \\
& + 2 \int_0^t  \mathbb{E} \langle  X^{(N)} - \overline{X} ,
(A + G - B (\Theta + \Theta_1 )  )  ( X^{(N)} - \overline{X} )  \rangle  ds  \notag \\
& + 2 \int_0^t \mathbb{E} \langle X^{(N)} - \overline{X} , - B (\Theta^N + \Theta_1^N - \Theta - \Theta_1 ) X^{(N)} - B (\Theta_2^N - \Theta_2) \rangle ds \notag \\
& + \int_0^t \mathbb{E} |  B_0(\Theta + \Theta_1)(X^{(N)}-\overline{X})
 + B_0( \Theta^N + \Theta_1^N - \Theta - \Theta_1 ) X^{(N)}\nonumber\\
 &\qquad\quad + B_0 (\Theta_2^N - \Theta_2 ) |^2 ds    . \notag
\end{align}
By Cauchy-Schwarz inequality and Lemmas~\ref{lm: Theta^N-Theta}
and \ref{lm: ub X_i}, it holds that for all sufficiently large $N$,
\begin{align}
 \mathbb{E}|X^{(N)}(t) - \overline{X}(t) |^2
\leq & \frac{C}{N} + \frac{1}{N^2} \sum_{i=1}^N \int_0^t C ds  \notag \\
 &\hskip -2cm + C\int_0^t  \mathbb{E}|X^{(N)}(s) - \overline{X}(s) |^2
+ |\Theta^N(s) - \Theta(s)|^2 + \sum_{k=1}^2 |\Theta_k^N(s) - \Theta_k(s)|^2 ds
\notag \\
\leq &  \frac{C_1}{N}
 + C\int_0^t \mathbb{E}|X^{(N)}(s) - \overline{X}(s)|^2 ds . \notag
\end{align}
The estimate \eqref{X^N-bar X}  follows from  Gr\"{o}nwall's lemma.
\end{proof}

Below we give a closed-form expression of the individual cost under $U^o$
by assuming that $\{X_i(0): 1\leq i \leq N\}$ are independent random variables with equal mean and covariance
\begin{align}
\mathbb{E}X_i(0) = \mu_0 , \quad \mathrm{Cov}(X_i(0), X_i(0)) = \Sigma_0 , \quad \forall  i \geq 1 .
\label{meancovX0}
\end{align}
Then the individual cost of a single agent $\mathcal{A}_i$ is
\begin{align}
   J_i(U^o)  = &(1/N ) \mathbb{E} {V}(0, X(0))
   \label{jiuoN} \\
= & ({1}/{N}) \mathbb{E} [ X^T(0)   \mathbf{P}(0)  X(0)
  +  \mathbf{S}^T(0) X(0) + {\bf r}(0) ]   \notag \\
 = & \mathbb{E} [ X_1^T(0) \Lambda_1^N(0) X_1(0) + (1-1/N) X_1^T(0) \Lambda_2^N(0) X_2(0) \notag \\
& + 2 S^{NT}(0) X_1(0) +  r^N(0) ] \notag \\
=& \mathrm{Tr} [ \Lambda_1^N(0) \Sigma_0 ]
+\mu_0^T( \Lambda_1^N(0) + (1-1/N) \Lambda_2^N(0) ) \mu_0
 + 2 S^{NT}(0) \mu_0 + r^N(0)  . \notag
\end{align}

\section{Decentralized  control}
\label{sec: performance analysis}

It is desirable  to find a decentralized control such that each agent only needs to know its own state and some low-dimensional auxiliary state.
Based on  the mean field limit dynamics \eqref{CL bar X},
we consider a decentralized  control law ${U}^d = (\check{u}_1^T, ..., \check{u}_N^T)^T$, where the individual control law
is \begin{align}
& \check u_i    =  - \Theta X_i   - \Theta_1 \overline X - \Theta_2 ,
\quad 1\leq i \leq N .
\label{check u_i}
\end{align}
We may view $\check u_i$ as the mean field limit of \eqref{centralized u_i}.
Note that without the common noise, $\overline X(t)$ becomes a deterministic function and may be computed off-line.

For  the decentralized control
 applied to the $N$-agent model, we have the  main result.
\begin{theorem}
\label{thm: performance analysis}
Under Assumptions~\ref{assm: sol Lambda Lambda_1} and \ref{assumption:mean},
let $U^o=(u_1^T, \cdots, u_N^T)^T$ be the centralized optimal control given by \eqref{centralized u_i},
and ${U}^d=(\check{u}_1^T, \cdots, \check{u}_N^T)^T$  the decentralized
control given by \eqref{check u_i}.
Then $0\le  J_{\rm soc}^{(N)}({U}^d) - J_{\rm soc}^{(N)}(U^o)  = O(1)$.
\end{theorem}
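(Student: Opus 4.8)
The plan is to represent the optimality gap exactly through a completion-of-squares (verification) identity, and then to show that the resulting quadratic discrepancy, although it is a sum over $N$ agents, remains $O(1)$ because each per-agent term is $O(1/N)$. The lower bound $J_{\rm soc}^{(N)}(U^d)-J_{\rm soc}^{(N)}(U^o)\ge 0$ is a free by-product, since $U^o$ is the social optimum.

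First I would establish the identity. For any admissible control $U$ with state trajectory $X^U$, let
\[
\widehat U(t)=-\big(\mathbf{R}+2\mathcal{M}_2(\mathbf{P}(t))\big)^{-1}\big[\widehat{\mathbf{B}}^T(\mathbf{P}(t)X^U(t)+\mathbf{S}(t))+\mathcal{M}_1^T(\mathbf{P}(t))\big]
\]
be the optimal feedback \eqref{U 2} evaluated along $X^U$. Applying It\^o's formula to $V(t,X^U(t))=[X^U]_{\mathbf{P}}^2+2(X^U)^T\mathbf{S}+\mathbf{r}$ and using the HJB equation \eqref{HJB V} together with the pointwise minimization property, the drift of $V$ collapses to the running cost plus the nonnegative quadratic $[U-\widehat U]_{\mathbf{R}+2\mathcal{M}_2(\mathbf{P})}^2$, so that
\[
J_{\rm soc}^{(N)}(U)=\mathbb{E}\,V(0,X(0))+\mathbb{E}\int_0^T[U(t)-\widehat U(t)]_{\mathbf{R}+2\mathcal{M}_2(\mathbf{P}(t))}^2\,dt.
\]
Because $J_{\rm soc}^{(N)}(U^o)=\mathbb{E}\,V(0,X(0))$ and $\mathbf{R}+2\mathcal{M}_2(\mathbf{P})>0$ under asymptotic solvability, the nonnegativity of the gap is immediate, and it remains to apply the identity with $U=U^d$ and to bound the integral.

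Next I would estimate the discrepancy. Writing $\check X_i$ and $\check X^{(N)}=(1/N)\sum_i\check X_i$ for the closed-loop states under $U^d$, the block structure of $\mathbf{P}$ in \eqref{P submatrices} and of $(\mathbf{R}+2\mathcal{M}_2(\mathbf{P}))^{-1}$ in \eqref{inv R+2M_2(P)} shows that the $i$-th block of $\widehat U$ is $\widehat u_i=-\Theta^N\check X_i-\Theta_1^N\check X^{(N)}-\Theta_2^N$, while $\check u_i=-\Theta\check X_i-\Theta_1\overline X-\Theta_2$. Hence
\[
\check u_i-\widehat u_i=(\Theta^N-\Theta)\check X_i+\Theta_1^N(\check X^{(N)}-\overline X)+(\Theta_1^N-\Theta_1)\overline X+(\Theta_2^N-\Theta_2).
\]
By Lemma~\ref{lm: Theta^N-Theta} the three coefficient-mismatch terms are $O(1/N)$ in norm and, with the second-moment bounds, contribute $O(1/N^2)$ to $\mathbb{E}|\check u_i-\widehat u_i|^2$; the dominant term $\Theta_1^N(\check X^{(N)}-\overline X)$ is $O(1/N)$ in mean square, giving $\sup_i\mathbb{E}\int_0^T|\check u_i-\widehat u_i|^2\,dt=O(1/N)$. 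Finally, by Lemma~\ref{lemma:chpoly} all eigenvalues of $\mathbf{R}+2\mathcal{M}_2(\mathbf{P})$ coincide with those of $\mathcal{R}_1(\Lambda_1^N)$ and of $\mathcal{R}_1(\Lambda_1^N)+NK^N$, which are $O(1)$ by the boundedness in Corollary~\ref{cor: sol Lambda_1^N Lambda_2^N}; consequently $[\Delta]_{\mathbf{R}+2\mathcal{M}_2(\mathbf{P})}^2\le C\sum_i|v_i|^2$ for any $\Delta=(v_1^T,\dots,v_N^T)^T$, and summing over the $N$ agents (with $v_i=\check u_i-\widehat u_i$) yields the gap $\le C\sum_{i=1}^N\mathbb{E}\int_0^T|\check u_i-\widehat u_i|^2\,dt=C\,N\cdot O(1/N)=O(1)$.

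The main obstacle is that Proposition~\ref{prop: X^N-bar X} and Lemma~\ref{lm: ub X_i} are stated for the centralized closed loop under $U^o$, whereas the identity forces me to work along the decentralized trajectory $\check X_i$ driven by $U^d$. I would therefore re-establish their analogues for \eqref{check u_i}: a uniform bound $\sup_{i,t}\mathbb{E}|\check X_i(t)|^2<\infty$ by the same $\alpha_t^N=\max_k\mathbb{E}|\check X_k(t)|^2$ Gr\"onwall argument as in Lemma~\ref{lm: ub X_i}, now with the autonomous bounded-moment input $\overline X$; and $\sup_t\mathbb{E}|\check X^{(N)}(t)-\overline X(t)|^2=O(1/N)$, obtained exactly as in Proposition~\ref{prop: X^N-bar X} by noting that $\check X^{(N)}-\overline X$ solves a linear SDE whose drift and $W_0$-diffusion are proportional to $\check X^{(N)}-\overline X$, whose aggregated individual-noise term $(1/N)\sum_i(B_1\check u_i+D)\,dW_i$ has quadratic variation $O(1/N)$, and whose initial error satisfies $\mathbb{E}|\check X^{(N)}(0)-\mu_0|^2=O(1/N)$ by Assumption~\ref{assumption:mean}; a final Gr\"onwall step closes the estimate. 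With these decentralized estimates in hand, the per-agent bound $\mathbb{E}|\check u_i-\widehat u_i|^2=O(1/N)$, and hence the $O(1)$ optimality loss, follow as above.
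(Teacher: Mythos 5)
Your proof is correct, but it follows a genuinely different route from the paper's. The paper computes the decentralized cost $J_{\rm soc}^{(N)}(U^d)$ in closed form: it represents $\check V(t,\mathbf{x},\bar x)$ via the Feynman--Kac formula as a quadratic form \eqref{check V ansatz}, derives the seven linear ODEs \eqref{ODEcheckLam1N}--\eqref{ODEcheckrN} by rescaling, and then proves the coefficient-difference estimates (Lemmas~\ref{lm: diff Lambda_1}--\ref{lm: diff r}) so that the gap $\mathbb{E}[\check V(0,X(0),\overline X(0))-V(0,X(0))]$ can be read off term by term; the $O(N)$ factors multiplying $O(1/N)$ differences give $O(1)$. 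You instead invoke the completion-of-squares identity $J_{\rm soc}^{(N)}(U)=\mathbb{E}V(0,X(0))+\mathbb{E}\int_0^T[U-\widehat U]^2_{\mathbf{R}+2\mathcal{M}_2(\mathbf{P})}\,dt$ (which is exactly the verification argument behind Theorem~\ref{thm: U}, so it is available under the standing hypotheses) and reduce the problem to the per-agent control deviation $\mathbb{E}|\check u_i-\widehat u_i|^2=O(1/N)$. Your decomposition of $\check u_i-\widehat u_i$ is right, the uniform $O(1)$ bound on $\lambda_{\max}(\mathbf{R}+2\mathcal{M}_2(\mathbf{P}))$ via Lemma~\ref{lemma:chpoly} and the boundedness of $(\Lambda_1^N,\Lambda_2^N)$ is right, and you correctly flag the one real gap to be filled — that Lemma~\ref{lm: ub X_i} and Proposition~\ref{prop: X^N-bar X} are stated for the centralized loop and must be redone for \eqref{xwithud}; your sketch of those analogues (Gr\"onwall on $\max_k\mathbb{E}|\check X_k|^2$, the $O(1/N)$ initial error from independence under Assumption~\ref{assumption:mean}, and the $O(1/N)$ quadratic variation of the aggregated idiosyncratic noise) is sound, and in fact the error SDE is cleaner than in Proposition~\ref{prop: X^N-bar X} because the limiting gains $\Theta,\Theta_1$ appear exactly. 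What your route buys is economy: it bypasses the entire $\check\Lambda$-ODE machinery and Lemmas~\ref{lm: diff Lambda_1}--\ref{lm: diff r}, and it exhibits the gap as a nonnegative integral, making $J_{\rm soc}^{(N)}(U^d)\ge J_{\rm soc}^{(N)}(U^o)$ transparent. What the paper's route buys is the explicit formula for $J_{\rm soc}^{(N)}(U^d)$ itself, which is reused in the comparison with the mean field game (Section~\ref{sec:sub:compSocMFG}) and in the numerics of Section~\ref{sec: num}, so the extra structure is not incidental to the theorem alone.
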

Theorem~\ref{thm: performance analysis} shows that if the decentralized  control \eqref{check u_i} is applied,
 the  optimality gap $J_{\rm soc}^{(N)}({U}^d) - J_{\rm soc}^{(N)}(U^o)$ is bounded, independent of the population size $N$. It is easy to show $J_{\rm soc}^{(N)}(U^o)=O(N)$. This  bounded optimality gap means
an  optimality loss of $O(1/N)$ per agent.
To prove the theorem, we will
find $J_{\rm soc}^{(N)}({U}^d)$ in an explicit form.

\subsection{Social cost under decentralized control}

Let $(\Lambda_1, \Lambda_2)$ be a solution of
\eqref{ODELam1}--\eqref{ODELam2} under Assumption~\ref{assm: sol Lambda Lambda_1}.

The $N$-agent system \eqref{X_i} under the set of decentralized individual control laws~\eqref{check u_i} has the following closed-loop dynamics
\begin{align}
 d \check{X}_i = & [ ( A - B\Theta) \check{X}_i + G \check{X}^{(N)} - B\Theta_1 \overline{X} - B\Theta_2 ] dt \notag \\
& + [D - B_1(\Theta \check{X}_i + \Theta_1 \overline{X} + \Theta_2 )] d W_i \notag \\
& + [D_0 - B_0(\Theta \check{X}^{(N)} + \Theta_1 \overline{X} + \Theta_2 )] d W_0 , \quad 1\leq i \leq N , \label{xwithud}
\end{align}
where $\check X_i(0)=X_i(0)$ and $\overline{X}$ satisfies \eqref{CL bar
X}.

Let $X(0) = (X_1^T(0), \cdots, X_N^T(0))^T$.
Note that given $(X(0), \overline X(0))$, the processes $X_1, \cdots, X_N$, and $\overline X$ have been determined on $[0,T]$. In order to evaluate $J^{(N)}_{\rm soc}( U^d)$, we consider a family of SDEs \eqref{CL bar X} and \eqref{xwithud} by assigning different initial conditions. We take the initial time $t$ and assign the initial condition $X(t)=\mathbf{x}$, $\overline X(t)=\bar x$.

By extending \eqref{J soc 2} to different initial conditions,
we define the social cost
\begin{align}
\check{V}(t, \mathbf{x}, \bar{x})=
&\sum_{i=1}^N \bigg\{ \mathbb{E}  \int_t^T \Big[ [\check X_i(s) - \Gamma \check X^{(N)}(s)]_Q^2 + [\check u_i(s)]_{R}^2  \Big]   dt \label{VEtx}  \\
& \quad  +\mathbb{E}  [\check X_i(T) - \Gamma_f \check X^{(N)}(T)]_{Q_f}^2
\bigg\} \nonumber
\end{align}
 with initial condition $(X(t), \overline{X}(t)) = (\mathbf{x}, \bar{x})$ under the decentralized control
${U}^d(s) = (\check{u}_1^T(s), \cdots, \check{u}_N^T(s))^T$,
$s\in [t, T]$.
Recalling \eqref{check u_i}, below we write the state feedback control ${U}^d$ as
\begin{align}
& {U}^d(t,\mathbf{x}, \bar x) = (\check{u}_1^T, \cdots, \check{u}_N^T)^T
=  - \widehat{\Theta} \mathbf{x} - {\widehat\Theta}_1 \bar x - {\widehat\Theta}_2 , \label{check U} \\
 & \widehat{\Theta}  = I_N \otimes  \Theta , \quad \widehat{\Theta}_1 =
\mathbf{1}_{N\times 1} \otimes \Theta_1, \quad \widehat{\Theta}_2
 = \mathbf{1}_{N\times 1} \otimes \Theta_2 .   \notag
\end{align}
By the Feynman--Kac formula \cite[Sec. 1.3, 3.5]{P2009},   the equation  for $\check{V}(t, \mathbf{x}, \bar{x})$ is given as
\begin{align}
\begin{cases}
-\frac{\partial \check{V}}{\partial t}
= {U}^{dT} (\mathbf{R} + \mathcal{M}_2(\tfrac{\partial^2 \check{V}}{\partial \mathbf{x}^2})) {U}^d + ( \tfrac{\partial^T \check{V} }{\partial \mathbf{x}} \widehat{\mathbf{B}} + \mathcal{M}_1(\tfrac{\partial^2 \check{V} }{\partial \mathbf{x}^2}) ) {U}^d \\
\hspace{1.1cm}
+ \tfrac{\partial^T \check{V} }{\partial \mathbf{x}}
 \mathbf{A} \mathbf{x} + \tfrac{\partial^T \check{V}}{\partial \bar{x}} (
Z_1 \bar{x} - B \Theta_2 )
   + \mathbf{x}^T \mathbf{Q} \mathbf{x} + \mathcal{M}_0(\tfrac{\partial^2
\check{V}}{\partial \mathbf{x}^2})   \\
\hspace{1.1cm}
+ (1/2) ( Z_0 \bar{x} - B_0 \Theta_2 + D_0 )^T \tfrac{\partial^2 \check{V}}{\partial \bar{x}^2 } ( Z_0 \bar{x} - B_0 \Theta_2 + D_0 )
 \\
\hspace{1.1cm}
+  (\mathbf{B}_0 \widehat{\mathbf{I}} U^d + \mathbf{D}_0)^T \tfrac{\partial^2 \check{V}}{\partial \mathbf{x} \partial \bar{x}} (Z_0 \bar{x} - B_0 \Theta_2 + D_0 ) , \\
 \check{V}(T, \mathbf{x} , \bar{x})  =  \mathbf{x}^T \mathbf{Q}_f \mathbf{x}  ,
\end{cases}
\label{HJB check V}
\end{align}
where we denote
\begin{align}
 & Z_0
= - B_0 (\Theta + \Theta_1 ) ,   \quad
 Z_1
= A + G - B (\Theta + \Theta_1).
\notag
\end{align}
Thus the right hand side of \eqref{VEtx} is just a probabilistic representation of the solution of \eqref{HJB check V} which will be determined below.

Suppose $\check{V}$ takes the following form
\begin{align}
\check{V}(t, \mathbf{x}, \bar{x}) =& \mathbf{x}^T \check{\mathbf{P}}_1(t) \mathbf{x} + \bar{x}^T
 \check{\mathbf{P}}_2(t) \bar{x}
 + \mathbf{x}^T \check{\mathbf{P}}_{12}(t) \bar{x} + \bar{x}^T \check{\mathbf{P}}_{12}^T(t) \mathbf{x}
\label{check V ansatz} \\
& + 2 \mathbf{x}^T \check{\mathbf{S}}_1(t) + 2 \bar{x}^T \check{\mathbf{S}}_2(t) + \check{\mathbf{r}}(t) .  \notag
\end{align}
By substituting \eqref{check V ansatz} into \eqref{HJB check V} and combining like terms, we obtain for $\check{\mathbf{P}}_1$,
$\check{\mathbf{P}}_{12}$, $\check{\mathbf{P}}_2$, $\check{\mathbf{S}}_1$, $\check{\mathbf{S}}_2$, and $\check{\mathbf{r}}$ the ODEs:
\begin{align}
&\begin{cases}
  - \tfrac{d}{dt} \check{\mathbf{P}}_1  =  \widehat{\Theta}^T
(\mathbf{R} + \mathcal{M}_2(2\check{\mathbf{P}}_1 ) ) \widehat\Theta  +  \check{\mathbf{P}}_1 (\mathbf{A} - \widehat{\mathbf{B}} \widehat\Theta )
        \\
 \qquad \qquad     +   (\mathbf{A} - \widehat{\mathbf{B}} \widehat\Theta )^T \check{\mathbf{P}}_1  + \mathbf{Q} , \\
  \check{\mathbf{P}}_1(T)  =  \mathbf{Q}_f ,
\end{cases}
\label{ODE check P_1} \\
&\begin{cases}
-  \tfrac{d}{dt} \check{\mathbf{P}}_{12} =   \widehat\Theta^T (\mathbf{R}  + \mathcal{M}_2(2\check{\mathbf{P}}_1) ) \widehat\Theta_1 -  \check{\mathbf{P}}_1 \widehat{\mathbf{B}} \widehat\Theta_1
 + ( \mathbf{A}^T -  \widehat\Theta^T \widehat{\mathbf{B}}^T ) \check{\mathbf{P}}_{12}  \\
 \hspace{1.6cm} +  \check{\mathbf{P}}_{12} Z_1
 - \widehat\Theta^T \widehat{\mathbf{I}}^T \mathbf{B}_0^T \check{\mathbf{P}}_{12} Z_0 ,  \\
\check{\mathbf{P}}_{12}(T) = 0  ,
\end{cases} \label{ODE check P_{12}}
\end{align}
\begin{align}
& \begin{cases}
-\tfrac{d}{dt} \check{\mathbf{P}}_2 =  \widehat\Theta_1^T (\mathbf{R} +
\mathcal{M}_2(2\check{\mathbf{P}}_1) ) \widehat\Theta_1
-  \check{\mathbf{P}}_{12}^T \widehat{\mathbf{B}} \widehat\Theta_1
- \widehat\Theta_1^T \widehat{\mathbf{B}}^T \check{\mathbf{P}}_{12}
  \\
\qquad\qquad
- Z_0^T \check{\mathbf{P}}_{12}^T \mathbf{B}_0 \widehat{\mathbf{I}} \widehat\Theta_1 - \widehat\Theta_1^T \widehat{\mathbf{I}}^T \mathbf{B}_0^T \check{\mathbf{P}}_{12} Z_0
+  \check{\mathbf{P}}_2 Z_1 + Z_1^T \check{\mathbf{P}}_2     \\
\qquad\qquad
+ Z_0^T \check{\mathbf{P}}_2 Z_0 , \\
\check{\mathbf{P}}_2(T) = 0  ,
\end{cases}  \label{ODE check P_2} \\
& \begin{cases}
- \frac{d}{dt} \check{\mathbf{S}}_1 = \widehat\Theta^T (\mathbf{R} + \mathcal{M}_2(2\check{\mathbf{P}}_1) ) \widehat{\Theta}_2
-  \widehat\Theta^T ( \widehat{\mathbf{B}}^T \check{\mathbf{S}}_1 + \mathcal{M}_1^T( \check{\mathbf{P}}_1) ) \\
 \qquad\qquad
+  \mathbf{A}^T \check{\mathbf{S}}_1
 - \check{\mathbf{P}}_{12} B \Theta_2
-  \widehat\Theta^T \widehat{\mathbf{I}}^T \mathbf{B}_0^T  \check{\mathbf{P}}_{12} (D_0 - B_0\Theta_2)   \\
\qquad\qquad
  - \check{\mathbf{P}}_1 \widehat{\mathbf{B}} \widehat{\Theta}_2 , \\
\check{\mathbf{S}}_1(T) = 0 ,
\end{cases} \label{ODE check S_1} \\
&\begin{cases}
- \frac{d}{dt} \check{\mathbf{S}}_2 = \widehat{\Theta}_1^T (\mathbf{R} + \mathcal{M}_2(2\check{\mathbf{P}}_1) ) \widehat{\Theta}_2
- \widehat{\Theta}_1^T (\widehat{\mathbf{B}}^T \check{\mathbf{S}}_1 + \mathcal{M}_1^T( \check{\mathbf{P}}_1))  \\
\qquad\qquad - \check{\mathbf{P}}_{12}^T \widehat{\mathbf{B}}\widehat{\Theta}_2
- \check{\mathbf{P}}_2 B \Theta_2 + Z_1^T \check{\mathbf{S}}_2
+ Z_0^T \check{\mathbf{P}}_2 (D_0 - B_0 \Theta_2) \\
 \qquad\qquad + Z_0^T \check{\mathbf{P}}_{12}^{T}
( \mathbf{D}_0 - \mathbf{B}_0 \widehat{\mathbf{I}} \widehat\Theta_2)
 - \widehat\Theta_1^T \widehat{\mathbf{I}}^T \mathbf{B}_0^T \check{\mathbf{P}}_{12} (D-B_0 \Theta_2) ,  \\
\check{\mathbf{S}}_2(T) = 0 ,
\end{cases} \label{ODE check S_2} \\
& \begin{cases}
- \frac{d}{dt}\check{\mathbf{r}} = \widehat\Theta_2^T (\mathbf{R} + \mathcal{M}_2(2\check{\mathbf{P}}_1) ) \widehat\Theta_2 - 2 (\check{\mathbf{S}}_1^T \widehat{\mathbf{B}}
+ \mathcal{M}_1(\check{\mathbf{P}}_1) ) \widehat\Theta_2
- 2\check{\mathbf{S}}_2^{ T} B\Theta_2  \\
\hspace{1.3cm}
+ (D_0 - B_0 \Theta_2 )^T ( \check{\mathbf{P}}_2 + 2 \widehat{\mathbf{I}}
\check{\mathbf{P}}_{12}  )(D_0 - B_0 \Theta_2)
+ \mathcal{M}_0(2\check{\mathbf{P}}_1) ,    \\
\check{\mathbf{r}}(T) = 0 .
\end{cases}  \label{ODE check r}
\end{align}

The above is a  system of six linear ODEs and has a unique solution on $[0,T]$.
By the following Lemmas \ref{lm: check P_1 submatrices}
and \ref{lm: check P_{12} submatrices},  we further obtain the low-dimensional ODE systems corresponding to the high-dimensional systems \eqref{ODE check P_1} and \eqref{ODE check P_{12}}.
The proof of Lemma~\ref{lm: check P_1 submatrices}
is similar to that of Lemma~\ref{lm: P submatrices},
and the proofs of Lemmas
\ref{lm: check P_{12} submatrices} and
\ref{lm: check S_1 submatrices}
are similar to that of Lemma~\ref{lm: S submatrices}; we omit the details
here.
\begin{lemma}
\label{lm: check P_1 submatrices}
For \eqref{ODE check P_1}, the solution $\check{\mathbf{P}}_1$ on $[0, T]$ has the representation
\begin{align}
\check{\mathbf{P}}_1 = \begin{bmatrix} \check{\Pi}_1^N & \check{\Pi}_2^N &  \cdots & \check{\Pi}_2^N \\
\check{\Pi}_2^N & \check{\Pi}_1^N & \cdots & \check{\Pi}_2^N \\
\vdots & \vdots & \ddots & \vdots \\
\check{\Pi}_2^N & \check{\Pi}_2^N & \cdots & \check{\Pi}_1^N  \end{bmatrix} ,
\quad
\check\Pi_1^N(t), \  \check\Pi_2^N(t) \in \mathcal{S}^n .
 \label{check P_1 submatrices}
\end{align}
\end{lemma}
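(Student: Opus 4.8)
The plan is to exploit the permutation symmetry of \eqref{ODE check P_1} together with uniqueness of its solution, mirroring the proof of Lemma~\ref{lm: P submatrices}. The crucial simplification relative to that lemma is that \eqref{ODE check P_1} is a \emph{linear} (Lyapunov-type) matrix ODE in $\check{\mathbf P}_1$: the coefficient $\widehat\Theta = I_N\otimes\Theta$ is a fixed, known time-dependent matrix, and the map $Z\mapsto\mathcal{M}_2(2Z)$ is linear, so no nonlinear inverse appears. Hence \eqref{ODE check P_1} admits a unique solution on $[0,T]$, and it suffices to show that this solution is invariant under all block permutations. For a permutation $\sigma$ of $\{1,\dots,N\}$ with $N\times N$ permutation matrix $\Pi_\sigma$, set $P_\sigma=\Pi_\sigma\otimes I_n$ and $Q_\sigma=\Pi_\sigma\otimes I_{n_1}$, and let $\widetilde{\check{\mathbf P}}_1(t):=P_\sigma\check{\mathbf P}_1(t)P_\sigma^T$. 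I would show that $\widetilde{\check{\mathbf P}}_1$ solves exactly the same terminal-value problem \eqref{ODE check P_1}.

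Second, I would verify the requisite equivariance of every coefficient. Writing the matrices blockwise one has $\mathbf A=I_N\otimes A+\mathbf 1_{N\times N}\otimes(G/N)$, $\widehat{\mathbf B}=I_N\otimes B$ so that $\widehat{\mathbf B}\widehat\Theta=I_N\otimes(B\Theta)$, $\mathbf Q=I_N\otimes Q+\mathbf 1_{N\times N}\otimes(Q^\Gamma/N)$, $\mathbf Q_f=I_N\otimes Q_f+\mathbf 1_{N\times N}\otimes(Q_f^\Gamma/N)$, and $\mathbf R=I_N\otimes R$. Each is a sum of a block-diagonal term and a multiple of $\mathbf 1_{N\times N}\otimes(\cdot)$, and both types are invariant under $M\mapsto P_\sigma M P_\sigma^T$; in particular $\mathbf A-\widehat{\mathbf B}\widehat\Theta$, $\mathbf Q$ and $\mathbf Q_f$ are $P_\sigma$-invariant. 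The equivariance $\widehat\Theta P_\sigma=Q_\sigma\widehat\Theta$ is immediate from $\widehat\Theta=I_N\otimes\Theta$. The only point requiring care is $\mathcal{M}_2$: from its definition $\mathcal{M}_2(Z)$ is the sum of a block-diagonal matrix whose $i$-th block is $\tfrac12 B_1^T Z_{ii}B_1$ and of $\tfrac12\,\mathbf 1_{N\times N}\otimes\big[(1/N^2)B_0^T(\sum_{i,j}Z_{ij})B_0\big]$; since conjugation by $P_\sigma$ permutes the diagonal blocks $Z_{ii}$ and leaves $\sum_{i,j}Z_{ij}$ unchanged, one obtains $\mathcal{M}_2(P_\sigma Z P_\sigma^T)=Q_\sigma\mathcal{M}_2(Z)Q_\sigma^T$.

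Third, combining these facts, conjugating each term of \eqref{ODE check P_1} by $P_\sigma$ and using $\widehat\Theta^T Q_\sigma=P_\sigma\widehat\Theta^T$ (the transpose of the equivariance identity) shows that $\widetilde{\check{\mathbf P}}_1$ satisfies the same ODE, while $P_\sigma\mathbf Q_f P_\sigma^T=\mathbf Q_f$ gives the same terminal condition. By uniqueness, $P_\sigma\check{\mathbf P}_1 P_\sigma^T=\check{\mathbf P}_1$ for every $\sigma$. Invariance under all block permutations forces the diagonal blocks of $\check{\mathbf P}_1$ to coincide (call the common value $\check\Pi_1^N$) and the off-diagonal blocks to coincide (call it $\check\Pi_2^N$); symmetry of $\check{\mathbf P}_1$ then yields $\check\Pi_1^N\in\mathcal S^n$ and $\check\Pi_2^N=(\check\Pi_2^N)^T\in\mathcal S^n$, which is exactly \eqref{check P_1 submatrices}.

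I expect the main (and essentially only) obstacle to be the careful bookkeeping of the two permutation representations $P_\sigma$ on $\mathbb R^{Nn}$ and $Q_\sigma$ on $\mathbb R^{Nn_1}$ and their correct matching through $\widehat\Theta$ inside the $\mathcal{M}_2$-term; everything else reduces to the block identities above, and the linearity of \eqref{ODE check P_1} removes the existence/uniqueness subtleties present in Lemma~\ref{lm: P submatrices}. An equivalent, more computational route would be to substitute the exchangeable ansatz \eqref{check P_1 submatrices} directly into \eqref{ODE check P_1} and verify that the right-hand side is again of the form \eqref{check P_1 submatrices}, so that the exchangeable subspace is invariant under the flow and the solution remains in it by uniqueness; this alternative has the advantage of simultaneously producing the reduced ODEs for $(\check\Pi_1^N,\check\Pi_2^N)$ referred to in the subsequent text.
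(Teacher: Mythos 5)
Your proposal is correct and follows essentially the same route as the paper: the paper omits the details but states that the proof mirrors that of Lemma~\ref{lm: P submatrices}, i.e., conjugate the ODE by block permutation matrices, check that all coefficients ($\mathbf{A}-\widehat{\mathbf{B}}\widehat\Theta$, $\mathbf{Q}$, $\mathbf{Q}_f$, $\mathbf{R}$) are invariant and that $\mathcal{M}_2$ is equivariant, and invoke uniqueness to conclude permutation invariance and hence the exchangeable block structure. Your use of general permutations $P_\sigma$ rather than the transpositions $J_{ij}$ of Appendix~A, and your explicit observation that linearity of \eqref{ODE check P_1} makes existence and uniqueness immediate, are only cosmetic refinements of the same argument.
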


\begin{lemma}
\label{lm: check P_{12} submatrices}
The matrix $\check{\mathbf{P}}_{12}(t)\in \mathbb{R}^{N n \times n}$ takes the form
\begin{align}
\check{\mathbf{P}}_{12}(t) = ( \check\Pi_{12}^{N T}(t), \cdots, \check\Pi_{12}^{N T} (t) )^T ,
\quad  \check\Pi_{12}^N(t) \in \mathbb{R}^{n \times n} .
\label{check P_{12} submatrices}
\end{align}
\end{lemma}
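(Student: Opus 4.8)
The plan is to exploit the linearity of \eqref{ODE check P_{12}} together with uniqueness of its solution, and to show that the block-vector ansatz
$$
\check{\mathbf{P}}_{12}(t) = \mathbf{1}_{N\times 1}\otimes \check\Pi_{12}^N(t), \qquad \check\Pi_{12}^N(t)\in\mathbb{R}^{n\times n},
$$
is invariant under the flow of \eqref{ODE check P_{12}} and matches the terminal condition. Since $\check{\mathbf{P}}_1$ is already determined and, by Lemma~\ref{lm: check P_1 submatrices}, has the symmetric block form with diagonal blocks $\check\Pi_1^N$ and off-diagonal blocks $\check\Pi_2^N$, equation \eqref{ODE check P_{12}} is a first-order linear matrix ODE in $\check{\mathbf{P}}_{12}$ alone and hence admits a unique solution on $[0,T]$. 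The terminal value $\check{\mathbf{P}}_{12}(T)=0$ already has the ansatz form (with $\check\Pi_{12}^N(T)=0$), so it suffices to show that the right-hand side of \eqref{ODE check P_{12}} maps any matrix of this block-vector form back into the same form; by uniqueness the claim then propagates to all $t\in[0,T]$.

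First I would record the Kronecker representations of the coefficient matrices, namely $\widehat{\mathbf{B}}=I_N\otimes B$, $\widehat\Theta=I_N\otimes\Theta$, $\widehat\Theta_1=\mathbf{1}_{N\times 1}\otimes\Theta_1$, $\mathbf{A}=I_N\otimes A+\mathbf{1}_{N\times N}\otimes(G/N)$, $\widehat{\mathbf{I}}^T=\mathbf{1}_{N\times 1}\otimes I_{n_1}$, and $\mathbf{B}_0=\mathbf{1}_{N\times 1}\otimes(B_0/N)$. By linearity of $\mathcal{M}_2$ and the block form of $\check{\mathbf{P}}_1$, the matrix $\mathbf{R}+\mathcal{M}_2(2\check{\mathbf{P}}_1)=\mathbf{R}+2\mathcal{M}_2(\check{\mathbf{P}}_1)$ inherits the symmetric block structure of \eqref{R+2M_2(P)}, with diagonal blocks $\check F^N$ and off-diagonal blocks $\check K^N$. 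Substituting $\check{\mathbf{P}}_{12}=\mathbf{1}_{N\times 1}\otimes\Pi$, I would evaluate the five terms on the right-hand side one by one, repeatedly using the contraction identities $\mathbf{1}_{N\times N}\mathbf{1}_{N\times 1}=N\,\mathbf{1}_{N\times 1}$ and $\mathbf{1}_{1\times N}\mathbf{1}_{N\times 1}=N$. For instance, the coupling term gives $(\mathbf{R}+2\mathcal{M}_2(\check{\mathbf{P}}_1))\widehat\Theta_1=\mathbf{1}_{N\times 1}\otimes[(\check F^N+(N-1)\check K^N)\Theta_1]$ and then $\widehat\Theta^T(\cdots)=\mathbf{1}_{N\times 1}\otimes[\Theta^T(\check F^N+(N-1)\check K^N)\Theta_1]$; likewise $-\check{\mathbf{P}}_1\widehat{\mathbf{B}}\widehat\Theta_1=\mathbf{1}_{N\times 1}\otimes[-(\check\Pi_1^N+(N-1)\check\Pi_2^N)B\Theta_1]$, while the three transport terms collapse to $\mathbf{1}_{N\times 1}\otimes[((A+G)^T-\Theta^T B^T)\Pi+\Pi Z_1-\Theta^T B_0^T\Pi Z_0]$.

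Collecting these, every term has the form $\mathbf{1}_{N\times 1}\otimes(\text{an }n\times n\text{ matrix})$, so the ansatz is preserved and \eqref{ODE check P_{12}} reduces to a self-contained $n\times n$ linear ODE for $\check\Pi_{12}^N$ with terminal value $0$. By uniqueness of the solution of the linear ODE \eqref{ODE check P_{12}}, the solution $\check{\mathbf{P}}_{12}$ must equal $\mathbf{1}_{N\times 1}\otimes\check\Pi_{12}^N$, which is precisely \eqref{check P_{12} submatrices}. The one delicate point is the last term $-\widehat\Theta^T\widehat{\mathbf{I}}^T\mathbf{B}_0^T\check{\mathbf{P}}_{12}Z_0$, where I expect the main bookkeeping effort: one must verify that the factor $1/N$ hidden in $\mathbf{B}_0$ cancels exactly against the $N$ produced by $\mathbf{1}_{1\times N}\mathbf{1}_{N\times 1}$, leaving the clean $n\times n$ expression $\Theta^T B_0^T\Pi Z_0$ with no residual $N$-dependence spoiling the block-vector structure. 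This cancellation, together with the analogous common-noise contraction, is the only nonroutine step, and it parallels the computation already carried out in the proof of Lemma~\ref{lm: S submatrices}.
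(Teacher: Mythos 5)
Your proposal is correct, but it takes a different route from the paper. The paper proves this lemma the same way it proves Lemma~\ref{lm: S submatrices} (see Appendix~\ref{appendix: pf S submatrices}): one left-multiplies the ODE \eqref{ODE check P_{12}} by the block-permutation matrix $J_{ij}$, checks that all coefficient matrices and the inhomogeneous term are invariant under this operation, and concludes from uniqueness of the linear ODE that $J_{ij}\check{\mathbf{P}}_{12}=\check{\mathbf{P}}_{12}$ for every $i\neq j$, forcing all $n\times n$ blocks to coincide. You instead exhibit the set $\{\mathbf{1}_{N\times 1}\otimes \Pi:\Pi\in\mathbb{R}^{n\times n}\}$ as an invariant subspace of the affine flow, verify term by term (correctly, including the cancellation of the $1/N$ in $\mathbf{B}_0$ against the $N$ from the contraction $\mathbf{1}_{1\times N}\mathbf{1}_{N\times 1}$, and the contraction of $\mathbf{1}_{N\times N}\otimes(G^T/N)$ against $\mathbf{1}_{N\times 1}\otimes\Pi$) that the right-hand side preserves this form, and invoke uniqueness with the terminal value $0$ already in the ansatz class. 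Both arguments hinge on the same uniqueness fact for the linear ODE. The paper's symmetry argument is shorter and requires no Kronecker bookkeeping, but it only establishes the block structure; your computation has the side benefit of producing the reduced $n\times n$ ODE for $\check\Pi_{12}^N$ explicitly, which the paper must derive separately anyway to arrive at \eqref{ODEcheckLam12N}. Your verification of the individual terms is consistent with what that equation requires, so the proposal stands as a complete and correct alternative proof.
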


\begin{lemma}
\label{lm: check S_1 submatrices}
The matrix $\check{\mathbf{S}}_{1}(t) \in \mathbb{R}^{Nn \times 1}$ takes
the form
\begin{align}
\check{\mathbf{S}}_1(t) = (\check{S}_1^{N T}(t), \cdots, \check{S}_1^{N
T}(t) )^T,
\quad \check{S}_1^N (t)\in \mathbb{R}^{n\times 1}  .
\label{check S_1 submatrices}
\end{align}
\end{lemma}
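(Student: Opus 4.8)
The plan is to exploit the permutation symmetry among the $N$ agent blocks together with uniqueness of the solution to the linear ODE \eqref{ODE check S_1}. For each permutation $\sigma$ of $\{1,\dots,N\}$, let $\Pi_\sigma\in\mathbb{R}^{N\times N}$ be the associated permutation matrix and set $P_\sigma=\Pi_\sigma\otimes I_n$ and $P_\sigma^{(1)}=\Pi_\sigma\otimes I_{n_1}$. Writing the right-hand side of \eqref{ODE check S_1} as an affine map $\mathbf{S}\mapsto M(t)\mathbf{S}+c(t)$ in $\mathbf{S}=\check{\mathbf{S}}_1$, with $M(t)=\mathbf{A}^T-\widehat\Theta^T\widehat{\mathbf{B}}^T$ and $c(t)$ collecting the remaining ($\check{\mathbf{S}}_1$-free) terms, I would show that $M(t)$ commutes with $P_\sigma$ and that $c(t)$ is fixed by $P_\sigma$, i.e. $P_\sigma c(t)=c(t)$. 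Granting this, $P_\sigma\check{\mathbf{S}}_1$ solves the same ODE as $\check{\mathbf{S}}_1$ with the same terminal value $P_\sigma\cdot 0=0$; since the system of six linear ODEs \eqref{ODE check P_1}--\eqref{ODE check r} has a unique solution, $P_\sigma\check{\mathbf{S}}_1=\check{\mathbf{S}}_1$ for every $\sigma$. A vector in $\mathbb{R}^{Nn}$ invariant under all block permutations must have identical $n$-blocks, which is exactly the representation \eqref{check S_1 submatrices}.

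The commutation $P_\sigma M(t)=M(t)P_\sigma$ follows from the Kronecker structure of the data. Indeed $\mathbf{A}=I_N\otimes A+(1/N)\mathbf{1}_{N\times N}\otimes G$ commutes with $P_\sigma$ because $\Pi_\sigma$ fixes both $I_N$ and $\mathbf{1}_{N\times N}$; $\widehat\Theta=I_N\otimes\Theta$ satisfies $P_\sigma\widehat\Theta^T=\widehat\Theta^T P_\sigma^{(1)}$; and $\widehat{\mathbf{B}}$, built from the columns $e^N_k\otimes B$, obeys $P_\sigma^{(1)}\widehat{\mathbf{B}}^T=\widehat{\mathbf{B}}^T P_\sigma$. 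Combining these gives $P_\sigma\widehat\Theta^T\widehat{\mathbf{B}}^T=\widehat\Theta^T\widehat{\mathbf{B}}^T P_\sigma$, whence $M(t)$ commutes with $P_\sigma$.

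The invariance $P_\sigma c(t)=c(t)$ is where the structural lemmas enter and is the step requiring the most care. Here I would invoke Lemma~\ref{lm: check P_1 submatrices}, which makes $\check{\mathbf{P}}_1$ exchangeable so that $P_\sigma\check{\mathbf{P}}_1=\check{\mathbf{P}}_1 P_\sigma$, and Lemma~\ref{lm: check P_{12} submatrices}, which gives $\check{\mathbf{P}}_{12}$ the stacked form of identical blocks so that $P_\sigma\check{\mathbf{P}}_{12}=\check{\mathbf{P}}_{12}$; these combine with $P_\sigma^{(1)}\widehat\Theta_1=\widehat\Theta_1$, $P_\sigma^{(1)}\widehat\Theta_2=\widehat\Theta_2$, $\widehat{\mathbf{I}}P_\sigma^{(1)}=\widehat{\mathbf{I}}$, and the permutation invariance of $\mathbf{R}$, $\mathbf{B}_0$ and of the maps $\mathcal{M}_1(\check{\mathbf{P}}_1)$, $\mathcal{M}_2(\check{\mathbf{P}}_1)$. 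One then checks term by term that each summand of $c(t)$ — the $\widehat\Theta^T(\mathbf{R}+\mathcal{M}_2(2\check{\mathbf{P}}_1))\widehat\Theta_2$ term, the $\widehat\Theta^T\mathcal{M}_1^T(\check{\mathbf{P}}_1)$ term, the $\check{\mathbf{P}}_{12}B\Theta_2$ and $\check{\mathbf{P}}_1\widehat{\mathbf{B}}\widehat\Theta_2$ terms, and the mixed $\widehat\Theta^T\widehat{\mathbf{I}}^T\mathbf{B}_0^T\check{\mathbf{P}}_{12}(D_0-B_0\Theta_2)$ term — is fixed by $P_\sigma$. The terms coupling $\check{\mathbf{P}}_{12}$ and $\widehat{\mathbf{I}}$ through $\mathbf{B}_0$ and $\mathcal{M}_1$ are the fiddliest, since one must track the $e^N_k\otimes(\cdot)$ and $\mathbf{1}_{N\times N}\otimes(\cdot)$ patterns, but they reduce to routine Kronecker identities.

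As an equivalent and more explicit alternative that directly parallels the proof of Lemma~\ref{lm: S submatrices}, I could instead substitute the ansatz $\check{\mathbf{S}}_1=\mathbf{1}_{N\times 1}\otimes\check S_1^N$ into \eqref{ODE check S_1} and verify, using the same structural facts, that the high-dimensional system collapses to a single $n$-dimensional linear ODE for $\check S_1^N$; compatibility of this reduced ODE with the unique solution then yields \eqref{check S_1 submatrices}. The main obstacle in either route is the same: carefully confirming that every coupling term built from $\check{\mathbf{P}}_{12}$, $\widehat{\mathbf{I}}$ and $\mathcal{M}_1$ respects the block-constant structure.
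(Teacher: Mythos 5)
Your argument is correct and is essentially the paper's own: the paper proves this lemma (and Lemma~\ref{lm: S submatrices}, to which it defers) by conjugating/left-multiplying the linear ODE by the block-transposition matrices $J_{ij}$, showing the transformed vector satisfies the same ODE with the same terminal data, and invoking uniqueness to conclude block-constancy. Using all permutations $P_\sigma$ rather than just transpositions, and isolating the affine structure $M(t)\mathbf{S}+c(t)$, is only a cosmetic difference.
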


Following the rescaling method in Section \ref{sec:sub:as}, we define
\begin{align}
& \check\Lambda_1^N =\check\Pi_1^N ,\hspace{.15cm}
\check\Lambda_2^N=N\check\Pi_2^N  ,\hspace{.15cm}
\check\Lambda_{12}^N= \check\Pi_{12}^N,\hspace{.15cm}
  \check\Lambda_{22}^N =\check{\mathbf{P}}_2/N, \hspace{.15cm}
    \check{S}_2^N= \check{\mathbf{S}}_2/N,  \hspace{.15cm}
  \check{r}^N=\check{\mathbf{r}}/N. \notag
  \end{align}
We give some intuition behind the scaling used to define $\check \Lambda_{22}^N$.
Take $\mathbf{x}=0$ and a large  $|\bar x|>  0$ at $t=0$; the resulting control input will generate  processes $\{X_i,1\le i\le N \}$ each containing a constituent component of roughly the magnitude of $\bar x$.
Then the social cost will contain a component of magnitude $O(N |\bar x|^2)$.
This suggests $\check{\mathbf{P}}_2$ increases nearly linearly with $N$.
We substitute \eqref{check P_1 submatrices}, \eqref{check P_{12} submatrices}
and \eqref{check S_1 submatrices} into
\eqref{ODE check P_1}, \eqref{ODE check P_{12}} and \eqref{ODE check S_1},
with
$\check\Pi_1^N = \check\Lambda_1^N$, $\check\Pi_2^N = \check\Lambda_2^N/N$, $\check\Pi_{12}^N = \check\Lambda_{12}^N$.
We further rewrite \eqref{ODE check P_2}, \eqref{ODE check S_2} and \eqref{ODE check r} using the new variables.
After the change of variables, we derive
\begin{align}
&\begin{cases}
-\frac{d}{dt} \check{\Lambda}_1^N
=  \Theta^T \mathcal{R}_1(\check{\Lambda}_1^N ) \Theta
+ \check{\Lambda}_1^N (A - B \Theta )   \\
 \hspace{1.6cm} + (A - B \Theta )^T \check{\Lambda}_1^N + Q + \check{g}_1(N, \check{\Lambda}_1^N, \check{\Lambda}_2^N ) ,   \\
\check{\Lambda}_1^N(T)  =  Q_f
+Q^\Gamma_f/N ,
\end{cases}
\label{ODEcheckLam1N} \\
&\begin{cases}
-\frac{d}{dt} \check{\Lambda}_2^N
=  \Theta^T B_0^T ( \check{\Lambda}_1^N + \check{\Lambda}_2^N ) B_0 \Theta
 + \check{\Lambda}_1^N G  + G^T \check{\Lambda}_1^N    \\
\hspace{1.6cm} + \check{\Lambda}_2^N (A + G - B \Theta )
  + (A + G - B \Theta )^T  \check{\Lambda}_2^N  \\
\hspace{1.6cm}
+Q^\Gamma + \check{g}_2(N,  \check{\Lambda}_2^N ) ,
\\
 \check{\Lambda}_2^N(T) = Q^\Gamma_f ,
\end{cases}
\label{ODEcheckLam2N} \\
& \begin{cases}
- \frac{d}{dt} \check\Lambda_{12}^N =
\Theta^T  \mathcal{R}_2(\check\Lambda_1^N, \check\Lambda_2^N)   \Theta_1
 + \Theta^T B_0^T \check\Lambda_{12}^N B_0  ( \Theta + \Theta_1 )   \\
  \hspace{1.6cm}
- ( \check\Lambda_1^N + \check\Lambda_2^N ) B\Theta_1
 + [ A + G - B \Theta  ]^T \check\Lambda_{12}^N
   \\
 \hspace{1.6cm}
 + \check\Lambda_{12}^N [A + G - B ( \Theta_1 + \Theta ) ]
 + \check{g}_{12}(N,  \check\Lambda_2^N ) , \\
\check\Lambda_{12}^N(T) = 0 ,
\end{cases} \label{ODEcheckLam12N} \\
& \begin{cases}
- \frac{d}{dt} \check\Lambda_{22}^N =  \Theta_1^T
 \mathcal{R}_2( \check\Lambda_1^N ,  \check\Lambda_2^N )   \Theta_1
 -  \check\Lambda_{12}^{N T} B \Theta_1
 -  \Theta_1^T B^T \check\Lambda_{12}^N  \\
 \hspace{1.6cm}
 +  \check\Lambda_{22}^N Z_1 + Z_1^T \check\Lambda_{22}^N
  -  Z_0^T \check\Lambda_{12}^{N T} B_0 \Theta_1
 -  \Theta_1^T B_0^T \check\Lambda_{12}^N Z_0
      \\
 \hspace{1.6cm}  +  Z_0^T \check\Lambda_{22}^N Z_0 +  \check{g}_{22}(N,  \check\Lambda_2^N) ,    \\
\check\Lambda_{22}^N(T) = 0 ,
\end{cases}     \label{ODEcheckLam22N}  \\
& \begin{cases}
-\frac{d}{dt} \check{S}_1^N = \Theta^T  \mathcal{R}_2( \check\Lambda_1^N ,  \check\Lambda_2^N )  \Theta_2
- ( \check\Lambda_1^N +  \check\Lambda_2^N + \check\Lambda_{12}^N ) B \Theta_2  \\
\hspace{1.6cm}
 - \Theta^T [B^T \check{S}_1^N + B_1^T \check\Lambda_1^N D + B_0^T(\check\Lambda_1^N + \check\Lambda_2^N )D_0]   \\
\hspace{1.6cm}
- \Theta^T B_0^T \check\Lambda_{12}^N (D_0 - B_0 \Theta_2 )
 + (A^T + G^T )\check{S}_1^N  \\
\hspace{1.6cm} + \check{g}_{01}(N,  \check\Lambda_2^N)  , \\
\check{S}_1^N(T) = 0 ,
\end{cases} \label{ODEcheckS1N}
\end{align}
\begin{align}
&\begin{cases}
- \frac{d}{dt} \check{S}_2^N =   \Theta_1^T \mathcal{R}_2(\check\Lambda_1^N, \check\Lambda_2^N) \Theta_2 + Z_1^T \check{S}_2^N   \\
\hspace{1.6cm}
-  \Theta_1^T [ B^T \check{S}_1^N + B_1^T \check\Lambda_1^N D
+ B_0^T (\check\Lambda_1^N + \check\Lambda_2^N ) D_0 ]
 \\
\hspace{1.6cm}
+ ( Z_0^T \check\Lambda_{22}^N +  Z_0^T \check\Lambda_{12}^{N T}
 -  \Theta_1^T B_0^T  \check\Lambda_{12}^N )  (D_0 - B_0  \Theta_2)
\\
\hspace{1.6cm}
 - (  \check\Lambda_{12}^{N T} + \check\Lambda_{22}^N ) B\Theta_2
+ \check{g}_{02}(N,  \check\Lambda_2^N) ,  \\
 \check{S}_2^N(T) = 0 ,
\end{cases} \label{ODEcheckS2N}  \\
&\begin{cases}
- \frac{d}{dt}  \check{r}^N  = \Theta_2^T  \mathcal{R}_2(\check\Lambda_1^N, \check\Lambda_2^N)  \Theta_2
 + D^T \check\Lambda_1^N D
+ D_0^T ( \check\Lambda_1^N  +  \check\Lambda_2^N ) D_0 \\
\hspace{1.5cm}
 -  [ ( \check{S}^{N T}_1 + \check{S}_2^{NT} ) B + D^T \check\Lambda_1^N B_1 + D_0^T (\check\Lambda_1^N + \check\Lambda_2^N ) B_0 ] \Theta_2    \\
\hspace{1.5cm}
 - \Theta_2^T  [ B^T ( \check{S}^{N }_1 + \check{S}_2^{N} )
 +  B_1^T \check\Lambda_1^N D
 + B_0^T (\check\Lambda_1^N + \check\Lambda_2^N ) D_0 ]  \\
\hspace{1.5cm}
+ (D_0 - B_0 \Theta_2 )^T ( \check\Lambda_{22}^N
 + \check\Lambda_{12}^N  + \check\Lambda_{12}^{N T} ) (D_0 - B_0 \Theta_2
)   \\
\hspace{1.5cm} + \check{g}_{03}(N, \check\Lambda_2^N) , \\
\check{r}^N(T) = 0 ,
\end{cases}  \label{ODEcheckrN}
\end{align}
with
\begin{align}
& \check{g}_1(N, \check{\Lambda}_1^N, \check{\Lambda}_2^N )
=  ({1}/{N})\Big\{  \Theta^T B_0^T [ \check{\Lambda}_1^N + (1-1/N) \check{\Lambda}_2^N ] B_0 \Theta  \notag \\
& \qquad\qquad \qquad \quad
 +  [ \check{\Lambda}_1^N + (1-1/N) \check{\Lambda}_2^N ] G
+ G^T [ \check{\Lambda}_1^N + (1-1/N) \check{\Lambda}_2^N ]
 +Q^\Gamma  \Big\} ,\notag \\
 & \check{g}_2 (N,  \check{\Lambda}_2^N )
= - ( \Theta^T B_0^T \check{\Lambda}_2^N B_0 \Theta +  \check{\Lambda}_2^N G + G^T \check{\Lambda}_2^N  )/N ,   \notag \\
& \check{g}_{12}(N,  \check\Lambda_2^N) =  (- \Theta^T B_0^T \check\Lambda_2^N B_0 \Theta_1 +  \check\Lambda_2^N B \Theta_1 )/N ,   \notag  \\
 & \check{g}_{22}(N,  \check\Lambda_2^N ) =  -
  \Theta_1^T B_0^T \check{\Lambda}_2^N  B_0 \Theta_1/N   ,  \notag \\
 & \check{g}_{01} (N, \check\Lambda_2^N )
=  [ \Theta^T B_0^T \check\Lambda_2^N
 ( D_0 - B_0\Theta_2 )
+  \check\Lambda_2^N B\Theta_2]/N , \notag  \\
 & \check{g}_{02}(N,  \check\Lambda_2^N)
= [ - \Theta_1^T B_0^T \check\Lambda_2^N B_0 \Theta_2
+ \Theta_1^T B_0^T \check\Lambda_2^N D_0 ]/N ,  \notag  \\
 & \check{g}_{03}(N,  \check\Lambda_2^N )
= [ -  \Theta_2^T B_0^T \check\Lambda_2^N B_0 \Theta_2
 + {2} D_0^T \check\Lambda_2^N B_0 \Theta_2
 -  D_0^T \check\Lambda_2^N D_0]/N .
\notag
\end{align}

\begin{remark}
\label{rmk: sol check system}
Given $(\Lambda_1 , \Lambda_2 )$ on $[0, T]$, the system
\eqref{ODEcheckLam1N}--\eqref{ODEcheckrN} is a linear ODE system and  has a unique solution on $[0, T]$ for each $N \geq 1$.
\end{remark}

\begin{remark}\label{remark:checkLamC}
Let $\psi^N$ stand for any of the functions $ \check \Lambda^N_1$,
$ \check\Lambda^N_2$, $ \check\Lambda^N_{12}$, $ \check\Lambda^N_{22}$,
$\check S_1^N$, $\check S_2^N$ and $ \check r^N$. Due to the bounded coefficients in the ODE system, $\sup_{N\geq 1,\ 0\leq t \leq T} |\psi^N|   \leq C$ for some fixed constant $C$.
\end{remark}

\begin{remark} \label{remark:g1overn}
Let $h^N$ stand for any of the functions  $\check{g}_1$, $\check{g}_{2}$,
$\check{g}_{12}$, $\check{g}_{22}$,
$\check{g}_{01}$, $\check{g}_{02}$ and $\check{g}_{03}$. Then
  $\sup_{t\in [0,T]}|h^N(t)| =O(1/N)$.
 \end{remark}

\subsection{Upper bound of optimality gap}

Under Assumption~\ref{assm: sol Lambda Lambda_1} on
 $(\Lambda_1 , \Lambda_2)$,
 for all sufficiently large $N$  we can solve for $(\Lambda_1^N, \Lambda_2^N, S^N, r^N)$ according to Theorem~\ref{thm: NSAS} and Corollaries~
\ref{cor: sol Lambda_1^N Lambda_2^N} and \ref{cor: sol S^N r^N}.
For every such $N$, we  solve the system
\eqref{ODEcheckLam1N}--\eqref{ODEcheckrN}
for a unique solution $(\check\Lambda_1^N, \check\Lambda_2^N, \check\Lambda_{12}^N, \check\Lambda_{22}^N, \check{S}_1^N, \check{S}_2^N, \check{r}^N)$.

The following lemmas  estimate some difference terms relating the
low-dimensional functions
$(\Lambda_1^N, \Lambda_2^N, S^N, r^N)$ to $(\check\Lambda_1^N, \check\Lambda_2^N, \check\Lambda_{12}^N, \check\Lambda_{22}^N, \check{S}_1^N, \check{S}_2^N, \check{r}^N)$,
which will be used to estimate the optimality loss
$|J_{\rm soc}^{(N)}(U^o) - J_{\rm soc}^{(N)}({U}^d)|$ of the decentralized control $U^d$.

\begin{lemma}
\label{lm: diff Lambda_1}
$\sup_{t\in[0, T]}|\check\Lambda_1^N  - \Lambda_1^N  | = O(1/N)$.
\end{lemma}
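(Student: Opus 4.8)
The plan is to prove the slightly stronger statement that $\sup_{t\in[0,T]}|\check\Lambda_1^N - \Lambda_1| = O(1/N)$ and then combine it with Corollary~\ref{cor: sol Lambda_1^N Lambda_2^N}, which already gives $\sup_{t\in[0,T]}|\Lambda_1^N - \Lambda_1| = O(1/N)$; the triangle inequality then yields the claim.

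The first step is to identify the limiting equation of \eqref{ODEcheckLam1N}. Dropping the perturbation $\check g_1$ and the terminal correction $Q_f^\Gamma/N$, and recalling that $\Theta = (\mathcal R_1(\Lambda_1))^{-1} B^T \Lambda_1$, I expect $\Lambda_1$ itself to solve this limiting equation. Indeed, substituting $\check\Lambda_1 = \Lambda_1$ into the right-hand side of \eqref{ODEcheckLam1N} and using $\Theta^T \mathcal R_1(\Lambda_1) \Theta = \Lambda_1 B (\mathcal R_1(\Lambda_1))^{-1} B^T \Lambda_1$ together with $\Lambda_1 B \Theta = \Lambda_1 B (\mathcal R_1(\Lambda_1))^{-1} B^T \Lambda_1$, the three feedback/drift terms collapse to $\Lambda_1 A + A^T \Lambda_1 - \Lambda_1 B (\mathcal R_1(\Lambda_1))^{-1} B^T \Lambda_1 + Q$, which is exactly $-\dot\Lambda_1 = -\Psi_1(\Lambda_1)$ by \eqref{psi_1} and \eqref{ODELam1}. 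Thus $\Lambda_1$ and $\check\Lambda_1^N$ satisfy the same type of backward equation driven by the common bounded gain $\Theta$, with respective terminal values $Q_f$ and $Q_f + Q_f^\Gamma/N$.

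The observation that makes the estimate routine is that $\mathcal R_1$ is affine in its argument, so $\mathcal R_1(\check\Lambda_1^N) - \mathcal R_1(\Lambda_1) = B_1^T (\check\Lambda_1^N - \Lambda_1) B_1$. Writing $\delta^N \coloneqq \check\Lambda_1^N - \Lambda_1$ and subtracting the two equations, the quadratic Riccati-type term cancels exactly and $\delta^N$ satisfies the \emph{linear} ODE
\begin{align}
-\tfrac{d}{dt}\delta^N = \Theta^T B_1^T \delta^N B_1 \Theta + \delta^N (A - B\Theta) + (A - B\Theta)^T \delta^N + \check g_1, \qquad \delta^N(T) = Q_f^\Gamma/N. \notag
\end{align}
All matrix coefficients $\Theta$, $A$, $B$, $B_1$ are continuous, hence bounded, on $[0,T]$ and independent of $N$; the terminal datum is $O(1/N)$; and $\sup_{[0,T]}|\check g_1| = O(1/N)$ by Remark~\ref{remark:g1overn}. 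Passing to the integral form and applying Gr\"onwall's lemma then gives $\sup_{t\in[0,T]}|\delta^N| = O(1/N)$, and the triangle inequality with Corollary~\ref{cor: sol Lambda_1^N Lambda_2^N} finishes the proof.

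There is no serious obstacle once the linearization is noticed; the only point requiring care is the algebraic verification that $\Lambda_1$ solves the limiting version of \eqref{ODEcheckLam1N}, i.e.\ that the decentralized closed-loop cost equation built from the mean-field gain $\Theta$ reproduces the optimal limit Riccati equation \eqref{ODELam1}. This is precisely the completion-of-squares identity expressing that $\Theta$ is the optimal feedback gain for the single-agent problem \eqref{inverse X_1}.
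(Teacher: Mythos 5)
Your proposal is correct and follows essentially the same route as the paper: reduce to estimating $\check\Lambda_1^N-\Lambda_1$ via Corollary~\ref{cor: sol Lambda_1^N Lambda_2^N}, observe that the difference satisfies a linear ODE with $O(1/N)$ terminal datum and $O(1/N)$ forcing $\check g_1$, and conclude by Gr\"onwall. The only difference is presentational: you make explicit the algebraic check that $\Lambda_1$ solves the limiting form of \eqref{ODEcheckLam1N}, which the paper leaves implicit when it writes down the ODE for the difference.
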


\begin{proof}
By Corollary~\ref{cor: sol Lambda_1^N Lambda_2^N},
$\sup_{t\in[0, T]}|\Lambda_1^N(t) - \Lambda_1(t)| = O(1/N)$,
so it suffices to show that
$\sup_{t\in [0, T]}|\check\Lambda_1^N(t) - \Lambda_1(t)| = O(1/N)$.
Taking the difference of \eqref{ODELam1}
and \eqref{ODEcheckLam1N} gives
\begin{align}
\begin{cases}
 \tfrac{d}{dt} (\check{\Lambda}_1^N  - \Lambda_1 )
 =  - \Theta^T B_1^T (\check{\Lambda}_1^N - \Lambda_1 ) B_1 \Theta
 - (\check{\Lambda}_1^N - \Lambda_1 ) ( A - B \Theta )  \\
 \qquad\qquad\qquad \quad  - ( A -  B \Theta )^T (\check{\Lambda}_1^N - \Lambda_1 ) - \check{g}_1(N, \check{\Lambda}_1^N, \check{\Lambda}_2^N) , \\
 \check{\Lambda}_1^N(T) - \Lambda_1(T) =
 Q^\Gamma_f/N   .  \notag
\end{cases}
\end{align}
Then it follows that for all $t\in [0, T]$,
\begin{align}
 | \check\Lambda_1^N(t) - \Lambda_1(t) |
 \leq &   \int_t^T\{ ( | \Theta^T B_1^T|^2
 +  2 | A - B \Theta | )
 | \check{\Lambda}_1^N - \Lambda_1 |
 + | \check{g}_1(N, \check{\Lambda}_1^N, \check{\Lambda}_2^N) | \} ds   \notag \\
& + | Q^\Gamma_f  |/N  . \notag
\end{align}
By Remark \ref{remark:g1overn}, $\sup_t |\check{g}_1(N, \check\Lambda_1^N, \check\Lambda_2^N)| = O(1/N)$. The lemma follows from Gr\"onwall's lemma.
\end{proof}

\begin{lemma}
\label{lm: diff sum Lambda}
$\sup_{t\in[0, T]}
|\Lambda_1^N + \Lambda_2^N - ( \check\Lambda_1^N + \check\Lambda_2^N  + \check\Lambda_{12}^N + \check\Lambda_{12}^{N T}
 + \check\Lambda_{22}^N ) |  = O(1/N)$.
\end{lemma}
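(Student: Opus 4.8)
The plan is to reduce the estimate to a comparison of the aggregated matrix
\[
\check{W}^N := \check\Lambda_1^N + \check\Lambda_2^N + \check\Lambda_{12}^N + \check\Lambda_{12}^{N T} + \check\Lambda_{22}^N
\]
with $\Lambda_3 := \Lambda_1 + \Lambda_2$, the solution of the Riccati ODE \eqref{ODELam3}. Since Corollary~\ref{cor: sol Lambda_1^N Lambda_2^N} already gives $\sup_t|\Lambda_1^N + \Lambda_2^N - \Lambda_3| = O(1/N)$, by the triangle inequality it suffices to prove $\sup_t|\check W^N - \Lambda_3| = O(1/N)$. The conceptual key is that the decentralized feedback gain is exactly the optimal gain of \eqref{ODELam3}:
\[
\Theta + \Theta_1 = \big( \mathcal{R}_2(\Lambda_1,\Lambda_2) \big)^{-1} B^T \Lambda_3 , \qquad Z_0 = -B_0(\Theta+\Theta_1), \qquad Z_1 = A+G - B(\Theta+\Theta_1),
\]
which I would verify first from the definitions of $\Theta,\Theta_1,H,H_1,\widehat E$.

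Next I would obtain the ODE for $\check W^N$ by adding \eqref{ODEcheckLam1N}, \eqref{ODEcheckLam2N}, \eqref{ODEcheckLam12N}, the transpose of \eqref{ODEcheckLam12N}, and \eqref{ODEcheckLam22N}. The terminal values sum to $Q_f + Q^\Gamma_f + Q^\Gamma_f/N = Q_{3f} + O(1/N)$, using $Q_{3f}=(I-\Gamma_f)^TQ_f(I-\Gamma_f)=Q_f+Q^\Gamma_f$. The constant part sums to $Q+Q^\Gamma = Q_3$. The terms linear in the $\check\Lambda$'s collapse, after the off-diagonal $B\Theta_1$ pieces cancel against the $\check\Lambda_{12}$ contributions, to $\check W^N Z_1 + Z_1^T \check W^N$. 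The quadratic-in-gain terms, using $\mathcal{R}_1(\check\Lambda_1^N)+B_0^T(\check\Lambda_1^N+\check\Lambda_2^N)B_0=\mathcal{R}_2(\check\Lambda_1^N,\check\Lambda_2^N)$ and $B_0(\Theta+\Theta_1)=-Z_0$, assemble into
\[
(\Theta+\Theta_1)^T \mathcal{R}_2(\check\Lambda_1^N, \check\Lambda_2^N)(\Theta+\Theta_1) + Z_0^T\big(\check\Lambda_{12}^N + \check\Lambda_{12}^{N T} + \check\Lambda_{22}^N\big)Z_0 .
\]

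I would then rewrite \eqref{ODELam3} in closed-loop form, valid precisely because $\Theta+\Theta_1$ is its optimal gain,
\[
-\dot\Lambda_3 = \Lambda_3 Z_1 + Z_1^T\Lambda_3 + (\Theta+\Theta_1)^T \mathcal{R}_2(\Lambda_1,\Lambda_2)(\Theta+\Theta_1) + Q_3 .
\]
Subtracting, and replacing $\mathcal{R}_2(\check\Lambda_1^N,\check\Lambda_2^N)$ by $\mathcal{R}_2(\Lambda_1,\Lambda_2)$ at the cost of a $B_1^T(\check\Lambda_1^N-\Lambda_1)B_1$ term plus $Z_0^T(\check\Lambda_1^N+\check\Lambda_2^N-\Lambda_3)Z_0$, and noting that this last piece combines with the $\check\Lambda_{12},\check\Lambda_{22}$ quadratic term into $Z_0^T D^N Z_0$ with $D^N:=\check W^N-\Lambda_3$, I obtain the linear ODE
\[
-\dot D^N = D^N Z_1 + Z_1^T D^N + Z_0^T D^N Z_0 + \mathcal{E}^N , \qquad D^N(T)=Q^\Gamma_f/N ,
\]
where $\mathcal{E}^N$ collects $(\Theta+\Theta_1)^T B_1^T(\check\Lambda_1^N-\Lambda_1)B_1(\Theta+\Theta_1)$ and the summed perturbations $\check g_1,\check g_2,\check g_{12},\check g_{12}^T,\check g_{22}$. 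Since $Z_0,Z_1$ are fixed bounded matrix functions, and $\sup_t|\mathcal{E}^N|=O(1/N)$ by Lemma~\ref{lm: diff Lambda_1} (which yields $\sup_t|\check\Lambda_1^N-\Lambda_1|=O(1/N)$) and Remark~\ref{remark:g1overn}, Gr\"onwall's lemma gives $\sup_t|D^N|=O(1/N)$, and the claim follows.

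The main obstacle is the bookkeeping in the second step: confirming that the five summed linear ODEs reproduce exactly the closed-loop Riccati structure of $\Lambda_3$. One must track every $\check\Lambda_{12}$- and $\check\Lambda_{22}$-cross term (those carrying $Z_0$ and $B\Theta_1$) and check, through repeated use of $B_0(\Theta+\Theta_1)=-Z_0$ and $A+G-B(\Theta+\Theta_1)=Z_1$, that they coalesce into the single expressions $Z_0^T(\cdots)Z_0$ and $\check W^N Z_1 + Z_1^T\check W^N$. The identity $\Theta+\Theta_1=(\mathcal{R}_2(\Lambda_1,\Lambda_2))^{-1}B^T\Lambda_3$ is what makes this collapse possible, so I would establish it before attempting the term collection.
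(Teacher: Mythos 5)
Your proposal is correct and follows essentially the same route as the paper: the paper likewise reduces to comparing the summed check-quantities with $\Lambda_1+\Lambda_2$ via Corollary~\ref{cor: sol Lambda_1^N Lambda_2^N}, sums the five ODEs \eqref{ODEcheckLam1N}--\eqref{ODEcheckLam22N} against \eqref{ODELam1}--\eqref{ODELam2} to get a linear ODE for the difference (your $-D^N$) with the perturbation $(\Theta+\Theta_1)^T B_1^T(\check\Lambda_1^N-\Lambda_1)B_1(\Theta+\Theta_1)+\check g_1+\check g_2+\check g_{12}+\check g_{12}^T+\check g_{22}=O(1/N)$ and terminal value $-Q_f^\Gamma/N$, and concludes by Gr\"onwall. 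Your explicit use of the identity $\Theta+\Theta_1=(\mathcal{R}_2(\Lambda_1,\Lambda_2))^{-1}B^T\Lambda_3$ and of the closed-loop form of \eqref{ODELam3} is just a cleaner articulation of the term collapse the paper performs implicitly.
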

\begin{proof}
Define
$\Delta_{12}^N:=\Lambda_1 + \Lambda_2 - \check\Lambda_1^N - \check\Lambda_2^N - \check\Lambda_{12}^N - \check\Lambda_{12}^{N T}
 -  \check\Lambda_{22}^N$.
Since $\sup_{t\in [0, T]} (|\Lambda_1^N - \Lambda_1 | + |\Lambda_2^N - \Lambda_2 | ) = O(1/N)$ by Corollary~\ref{cor: sol Lambda_1^N Lambda_2^N},
it suffices to show that
$\sup_{t\in [0, T]}|  \Delta_{12}^N |  = O(1/N)$.
We combine \eqref{ODEcheckLam1N}--\eqref{ODEcheckLam22N} and
\eqref{ODELam2} to get the ODE
\begin{align}
 & \frac{d}{dt}  \Delta_{12}^N    =  ( \Theta + \Theta_1 )^T
  [ B_1^T ( \check{\Lambda}_1^N - \Lambda_1 ) B_1 -  B_0^T \Delta_{12}^N B_0 ]    (\Theta + \Theta_1 ) \notag  \\
& \qquad\qquad - \Delta_{12}^N [ A + G - B ( \Theta + \Theta_1 ) ]
   - [ A + G - B( \Theta + \Theta_1 ) ]^T  \Delta_{12}^N \notag \\
 & \qquad\qquad + \check{g}_1 + \check{g}_2 + \check{g}_{12} + \check{g}_{12}^T +  \check{g}_{22} ,  \notag \\
 & \Delta_{12}^N(T)
=  - Q^\Gamma_f /N .       \notag
\end{align}
Since $\sup_{t\in[0, T]} | \check\Lambda_1^N - \Lambda_1 | = O(1/N)$ by the proof of
Lemma~\ref{lm: diff Lambda_1}
and $\sup_t|\check{g}_1 + \check{g}_2 + \check{g}_{12} + \check{g}_{12}^T
+  \check{g}_{22}| = O(1/N)$ by Remark \ref{remark:g1overn},  the desired result follows from Gr\"onwall's lemma, in the same manner as in the proof of Lemma~\ref{lm: diff Lambda_1}.
\end{proof}

\begin{lemma}
\label{lm: diff S}
$\sup_{t\in[0, T]} |S^N - \check{S}_1^N - \check{S}_2^N| = O(1/N)$.
\end{lemma}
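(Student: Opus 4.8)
The plan is to reduce the claim to a comparison with the limiting function $S$ of \eqref{ODESlim}. By Corollary~\ref{cor: sol S^N r^N}(iii) we have $\sup_{t}|S^N - S| = O(1/N)$, so it suffices to prove $\sup_t |\check{S}_1^N + \check{S}_2^N - S| = O(1/N)$. I would therefore set $\Delta^N := S - (\check{S}_1^N + \check{S}_2^N)$, observe that all three functions vanish at $T$ so that $\Delta^N(T)=0$, and derive a linear ODE for $\Delta^N$ by subtracting the sum of \eqref{ODEcheckS1N} and \eqref{ODEcheckS2N} from \eqref{ODESlim}. Grönwall's lemma will then finish the argument once the non-difference terms are shown to be $O(1/N)$.

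The core computation is to add \eqref{ODEcheckS1N} and \eqref{ODEcheckS2N} and reorganize the right-hand side using $Z_1 = A+G-B(\Theta+\Theta_1)$ and $Z_0 = -B_0(\Theta+\Theta_1)$. The two $\mathcal{R}_2(\check\Lambda_1^N,\check\Lambda_2^N)\Theta_2$ terms combine into $(\Theta+\Theta_1)^T\mathcal{R}_2(\check\Lambda_1^N,\check\Lambda_2^N)\Theta_2$; the two $B\Theta_2$ terms into $-\Xi^N B\Theta_2$, where $\Xi^N := \check\Lambda_1^N + \check\Lambda_2^N + \check\Lambda_{12}^N + \check\Lambda_{12}^{NT} + \check\Lambda_{22}^N$ is precisely the combination controlled by Lemma~\ref{lm: diff sum Lambda}; and all cross terms built from $Z_0$ and $\check\Lambda_{12}^N$ collapse into $-(\Theta+\Theta_1)^T B_0^T(\check\Lambda_{12}^N + \check\Lambda_{12}^{NT} + \check\Lambda_{22}^N)(D_0 - B_0\Theta_2)$. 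The $\check{S}$-linear terms combine, via $Z_1^T = (A+G)^T - (\Theta+\Theta_1)^T B^T$, into $(A+G)^T(\check{S}_1^N + \check{S}_2^N) - (\Theta+\Theta_1)^T B^T(\check{S}_1^N+\check{S}_2^N)$, whose coefficient matches that of the limiting equation after I rewrite $\varphi_1(\Lambda_1,\Lambda_2,S)$ using the algebraic identities $(\Theta+\Theta_1)^T = (\Lambda_1+\Lambda_2)B H_1$, $H_1 = (\mathcal{R}_2(\Lambda_1,\Lambda_2))^{-1}$, and $\Theta_2 = H_1[B^T S + B_1^T\Lambda_1 D + B_0^T(\Lambda_1+\Lambda_2)D_0]$.

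With these groupings, subtracting produces $-\dot\Delta^N = [(A+G)^T - (\Theta+\Theta_1)^T B^T]\Delta^N + \mathcal{E}^N$, where the forcing $\mathcal{E}^N$ collects: the terms $\check{g}_{01} + \check{g}_{02}$, which are $O(1/N)$ by Remark~\ref{remark:g1overn}; the differences $\check\Lambda_1^N - \Lambda_1$ in the $B_1^T(\cdot)D$ terms, which are $O(1/N)$ by Lemma~\ref{lm: diff Lambda_1}; and the difference $\Xi^N - (\Lambda_1+\Lambda_2)$ in every remaining $D_0$- and $\Theta_2$-term, which is $O(1/N)$ by Lemma~\ref{lm: diff sum Lambda}. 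The essential cancellation is that the collected $\Theta_2$-terms equal $\Xi^N B\Theta_2 - (\Theta+\Theta_1)^T[R + B_1^T\check\Lambda_1^N B_1 + B_0^T \Xi^N B_0]\Theta_2$, which, after replacing $\Xi^N$ by $\Lambda_1+\Lambda_2$ and $\check\Lambda_1^N$ by $\Lambda_1$ (each at cost $O(1/N)$), becomes $(\Lambda_1+\Lambda_2)B\Theta_2 - (\Lambda_1+\Lambda_2)B H_1 \mathcal{R}_2(\Lambda_1,\Lambda_2)\Theta_2 = 0$ because $H_1\mathcal{R}_2(\Lambda_1,\Lambda_2) = I$. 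All coefficients are uniformly bounded by Remark~\ref{remark:checkLamC}, so $\sup_t|\mathcal{E}^N| = O(1/N)$, and Grönwall's lemma yields $\sup_t|\Delta^N| = O(1/N)$; combined with $\sup_t|S^N - S| = O(1/N)$ this gives the stated estimate.

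The hard part will be the bookkeeping in the previous paragraph, and in particular the fact that $\check\Lambda_2^N$ does not individually converge to $\Lambda_2$, so no single $\check\Lambda$-matrix may be replaced by its limit in isolation. The terms must be grouped so that only the total combination $\Xi^N$ (governed by Lemma~\ref{lm: diff sum Lambda}) and the single matrix $\check\Lambda_1^N$ (governed by Lemma~\ref{lm: diff Lambda_1}) ever appear; any regrouping that isolates $\check\Lambda_2^N$ or $\check\Lambda_{22}^N$ alone would break the estimate. Verifying that the $Z_0$-cross-terms assemble exactly into $-(\Theta+\Theta_1)^T B_0^T(\check\Lambda_{12}^N + \check\Lambda_{12}^{NT} + \check\Lambda_{22}^N)(D_0 - B_0\Theta_2)$, and that the $\Theta_2$-terms telescope through $H_1\mathcal{R}_2(\Lambda_1,\Lambda_2)=I$, is the delicate step of the calculation.
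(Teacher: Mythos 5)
Your proposal is correct and follows essentially the same route as the paper: reduce via Corollary~\ref{cor: sol S^N r^N}(iii) to comparing $\check S_1^N+\check S_2^N$ with the limit $S$, subtract the summed ODEs \eqref{ODEcheckS1N}--\eqref{ODEcheckS2N} from \eqref{ODESlim} to get a linear ODE for the difference driven by $\check\Lambda_1^N-\Lambda_1$ (Lemma~\ref{lm: diff Lambda_1}), the combination $\Delta_{12}^N$ (Lemma~\ref{lm: diff sum Lambda}) and $\check g_{01}+\check g_{02}$ (Remark~\ref{remark:g1overn}), then apply Gr\"onwall. Your explicit bookkeeping of the $Z_0$-cross-terms and the $H_1\mathcal{R}_2(\Lambda_1,\Lambda_2)=I$ cancellation correctly reproduces the difference ODE that the paper displays without derivation.
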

\begin{proof}
By Corollary~\ref{cor: sol S^N r^N},
it suffices to show that
$\sup_{t\in[0, T]} |S - \check{S}_1^N - \check{S}_2^N| = O(1/N)$.
Combining \eqref{ODE check S_1}, \eqref{ODE check S_2}
and \eqref{ODESlim} gives
\begin{align}
  \frac{d}{dt} ( S - \check{S}_1^N - \check{S}_2^N)
 = & - [A+G - B(\Theta + \Theta_1) ]^T (S - \check{S}^N_1 - \check{S}^N_2)
\notag \\
& + (\Theta + \Theta_1)^T B_1^T (\Lambda_1 - \check\Lambda_1^N) (D - B_1 \Theta_2) \notag \\
& + (\Theta + \Theta_1)^T B_0^T \Delta^N_{12} (D_0 - B_0 \Theta_2)
+ \Delta^N_{12} B \Theta_2 \notag \\
 &  + \check{g}_{01}(N, \check\Lambda_2^N)
+ \check{g}_{02}(N, \check\Lambda_2^N) ,
\notag
\end{align}
where
$ S(T) - \check{S}_1^N(T) - \check{S}_2^N(T) = 0$.
With the estimates of $\Lambda_1 - \check\Lambda_1^N$ and
$\Delta_{12}^N$ obtained in the proofs of Lemmas \ref{lm: diff Lambda_1} and \ref{lm: diff sum Lambda}, respectively, and $\sup_t|\check{g}_{0k}(N, \check\Lambda_2^N)| = O(1/N)$, $k=1, 2$ in Remark \ref{remark:g1overn},
the desired result follows from Gr\"{o}nwall's lemma.
\end{proof}

\begin{lemma}
\label{lm: diff r}
$\sup_{t\in [0, T]} | r^N - \check{r}^N | = O(1/N)$.
\end{lemma}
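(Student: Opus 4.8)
The plan is to follow the same template as Lemmas~\ref{lm: diff Lambda_1}--\ref{lm: diff S}: replace the finite-$N$ object $r^N$ by its limit and then estimate against $\check r^N$ through a difference ODE. By Corollary~\ref{cor: sol S^N r^N}(iii) we have $\sup_{t\in[0,T]}|r^N - r| = O(1/N)$, where $r$ solves \eqref{ODErlim}. Hence by the triangle inequality it suffices to prove $\sup_{t\in[0,T]}|r - \check r^N| = O(1/N)$, i.e.\ to compare $\check r^N$ against the limiting $r$.

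First I would form the difference ODE by subtracting \eqref{ODEcheckrN} from \eqref{ODErlim}. The crucial structural point is that the right-hand side of \eqref{ODErlim} is $\varphi_2(\Lambda_1,\Lambda_2,S)$, which depends on $(\Lambda_1,\Lambda_2,S)$ but \emph{not} on $r$, and likewise the right-hand side of \eqref{ODEcheckrN} does not contain $\check r^N$. Consequently $\tfrac{d}{dt}(r-\check r^N)$ equals the difference of the two driving terms alone, with terminal condition $r(T)-\check r^N(T)=0$. No Gr\"onwall closure is therefore needed: once the driving difference is shown to be $O(1/N)$ uniformly on $[0,T]$, integrating from $t$ to $T$ immediately yields $|r(t)-\check r^N(t)|\le\int_t^T O(1/N)\,ds=O(1/N)$.

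The core of the work is thus to show that the driving difference is $O(1/N)$. Here I would exploit the completion-of-squares structure of $\varphi_2$: writing $\beta\coloneqq B^T S + B_1^T\Lambda_1 D + B_0^T(\Lambda_1+\Lambda_2)D_0$ and $H_1=(\mathcal{R}_2(\Lambda_1,\Lambda_2))^{-1}$, one has $\Theta_2=H_1\beta$, so $\beta^T H_1\beta=\Theta_2^T\mathcal{R}_2(\Lambda_1,\Lambda_2)\Theta_2=\Theta_2^T\beta$, and $\varphi_2$ takes exactly the symmetric form mirrored by \eqref{ODEcheckrN}. Matching terms, the genuinely quadratic pieces (those carrying the $\mathcal{R}_2$-factor and the limit gains $\Theta_2$) cancel, and every surviving term pairs a bounded coefficient against one of the already-controlled differences: the terms $D^T\Lambda_1 D$ and $B_1^T\Lambda_1 D$ against $\check\Lambda_1^N-\Lambda_1$ (Lemma~\ref{lm: diff Lambda_1}); the combinations built from $\Lambda_1+\Lambda_2$ against $\Delta_{12}^N=\Lambda_1+\Lambda_2-(\check\Lambda_1^N+\check\Lambda_2^N+\check\Lambda_{12}^N+\check\Lambda_{12}^{NT}+\check\Lambda_{22}^N)$ (Lemma~\ref{lm: diff sum Lambda}); and the terms linear in $S$ against $S-\check S_1^N-\check S_2^N$ (Lemma~\ref{lm: diff S}). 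Finally $\check g_{03}(N,\check\Lambda_2^N)=O(1/N)$ by Remark~\ref{remark:g1overn}. Boundedness of all coefficients on $[0,T]$ follows from continuity of $(\Lambda_1,\Lambda_2,S)$ and of the gains $(\Theta,\Theta_1,\Theta_2)$ together with Remark~\ref{remark:checkLamC}, so each contribution is $O(1/N)$ uniformly.

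I expect the main obstacle to be precisely this algebraic bookkeeping: \eqref{ODEcheckrN} is a lengthy expression, and one must verify that after subtracting $\varphi_2(\Lambda_1,\Lambda_2,S)$ every remaining term factors through one of the three controlled differences rather than through an uncontrolled quantity. The identity $\beta^T H_1\beta=\Theta_2^T\mathcal{R}_2(\Lambda_1,\Lambda_2)\Theta_2$ is what makes the leading quadratic terms collapse; without it one would be left with a term of order $O(1)$ that defeats the estimate. Once the grouping is carried out, the conclusion is immediate by integration over $[t,T]$ and the triangle inequality with Corollary~\ref{cor: sol S^N r^N}(iii).
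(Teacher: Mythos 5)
Your proposal is correct and follows essentially the same route as the paper: reduce to $\sup_t|r-\check r^N|$ via Corollary~\ref{cor: sol S^N r^N}(iii), form the difference ODE from \eqref{ODErlim} and \eqref{ODEcheckrN}, observe that after the quadratic terms collapse every surviving term factors through $\check\Lambda_1^N-\Lambda_1$, $\Delta_{12}^N$, $S-\check S_1^N-\check S_2^N$ or $\check g_{03}$, and integrate. The paper's proof is exactly this computation written out explicitly, so no further comment is needed.
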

\begin{proof}
By Corollary~\ref{cor: sol S^N r^N}, it suffices to show that
$\sup_{t\in [0, T]} | r - \check{r}^N| = O(1/N)$, where $r$ is the unique solution of \eqref{ODErlim}.
Combining \eqref{ODErlim} and \eqref{ODEcheckrN} gives
\begin{align}
& \frac{d}{dt}  ( r - \check{r}^N )
=  \Theta_2^T [B_1^T (\check\Lambda_1^N - \Lambda_1 ) B_1 - B_0^T   \Delta_{12}^N  B_0] \Theta_2  \notag \\
&\qquad\qquad + [ ( S^T - \check{S}_1^{N T} - \check{S}_2^{N T}  ) B
 + D^T ( \Lambda_1 - \check\Lambda_1^N  ) B_1
  + D_0^T \Delta_{12}^N  B_0 ] \Theta_2   \notag \\
&\qquad\qquad  + \Theta_2^T [ B^T(S - \check{S}^N_1 - \check{S}^N_2) + B_1^T (\Lambda_1 - \check\Lambda_1^N) D  + B_0^T \Delta_{12}^N D_0 ]
\notag \\
&\qquad\qquad + D^T (\check\Lambda_1^N - \Lambda_1 ) D - D_0^T \Delta_{12}^N  D_0 + \check{g}_{03}(N, \check\Lambda_2^N)  ,  \notag \\
& r(T) - \check{r}^N(T) =  0 . \notag
\end{align}
With the estimates of $\Lambda_1 - \check\Lambda_1^N$, $\Delta_{12}^N$ and $S - \check{S}_1^N - \check{S}_2^N$ obtained in the proofs of Lemmas \ref{lm: diff Lambda_1}, \ref{lm: diff sum Lambda} and \ref{lm: diff S},
and $\sup_t|\check{g}_{03}(N, \check\Lambda_2^N)| = O(1/N)$,
we obtain the desired result.
\end{proof}

\begin{proof}[Proof of Theorem~\ref{thm: performance analysis}]
We have
\begin{align}
 & J_{\rm soc}^{(N)}(U^d) - J_{\rm soc}^{(N)} (U^o) \nonumber \\
 =&{\mathbb E} [ \check{V}(0, X(0), \overline{X}(0)) -  V(0, X(0))  ]
\notag \\
 = &\mathbb{E} [ X^T(0) (  \check{\mathbf{P}}_1(0) -  \mathbf{P}(0)  ) X(0)
+ 2 X^T(0) \check{\mathbf{P}}_{12}(0) \overline{X}(0)
  + \overline{X}^T(0) \check{\mathbf{P}}_2 (0) \overline{X}(0) ]
\label{JNU-JNcheckU1} \\
& + 2 \mathbb{E} [  X^T(0) \check{\mathbf{S}}_1(0) + \overline{X}^T(0)
\check{\mathbf{S}}_2(0) - X^T(0) \mathbf{S}(0) ]
+ \check{\mathbf{r}}(0) -  \mathbf{r}(0)  . \notag
\end{align}
The linear term on the right hand side of \eqref{JNU-JNcheckU1} may be written as
\begin{align}
 & 2 \mathbb{E} [ X^T(0) \check{\mathbf{S}}_1(0) + \overline{X}^T(0) \check{\mathbf{S}}_2(0) - X^T(0) \mathbf{S}(0)  ]  \notag \\
 & =  2 \sum_{i=1}^N \mathbb{E} [   X_i^T(0) \check{S}_1^N(0)
 + \overline{X}^T(0) \check{S}_2^N(0) - X_i^T(0) S^N(0) ] \notag \\
 & =  2 N  \mu_0^T ( \check{S}_1^N(0) +  \check{S}_2^N(0) - S^N(0)  )  .
\label{S-checkS}
\end{align}
The quadratic term on the right hand side of \eqref{JNU-JNcheckU1} may be
written as
\begin{align}
&  \mathbb{E} [ X^T(0) (  \check{\mathbf{P}}_1(0) - \mathbf{P}(0)  ) X(0)
 + 2 X^T(0) \check{\mathbf{P}}_{12}(0) \overline{X}(0)
  + \overline{X}^T(0) \check{\mathbf{P}}_2(0) \overline{X}(0) ]
 \notag \\
& = \sum_{i=1}^N \mathbb{E} [ X_i^T(0)  ( \check\Lambda_1^N(0)
 - \Lambda_1^N(0)) X_i(0) ]
+ \sum_{i \neq j = 1}^N \frac{1}{N} \mathbb{E} [ X_i^T(0) ( \check\Lambda_2^N(0) - \Lambda_2^N(0)  ) X_j(0) ]  \notag \\
& \quad  +  \sum_{i=1}^N \mathbb{E} [ X_i^T(0) \check\Lambda_{12}^N (0)
\overline{X}(0) ]
 +  \sum_{i=1}^N \mathbb{E} [ \overline{X}^T(0) \check\Lambda_{12}^{N T}(0) X_i(0) ]
 + N \mathbb{E} [ \overline{X}^T(0)  \check\Lambda_{22}^N (0) \overline{X}(0) ] \notag \\
 & =\sum_{i=1}^N \Tr [   ( \check\Lambda_1^N(0)
 - \Lambda_1^N(0))  \Sigma_0^{i}   ]+ \nonumber\\
 &\quad + N  \mu_0^T [  \check\Lambda_1^N(0) + \check\Lambda_2^N(0)
  + \check\Lambda_{12}^N (0) + \check\Lambda_{12}^{N T}(0)  +  \check\Lambda_{22}^N (0)
 - \Lambda_1^N(0)  - \Lambda_2^N(0) ] \mu_0    \notag \\
& \quad
 -   \mu_0^T[\check\Lambda_2^N(0) - \Lambda_2^N(0)]\mu_0  \nonumber
  \\
  & \eqqcolon \zeta_0^N .   \nonumber
\end{align}
Substituting \eqref{S-checkS} and $\zeta_0^N$ into \eqref{JNU-JNcheckU1} gives
\begin{align}
 J_{\rm soc}^{(N)}(U^d) - J_{\rm soc}^{(N)}(U^o)
 & =\zeta_0^N
   +    2 N  \mu_0^T (  \check{S}_1^N(0)
 + \check{S}_2^N(0) - S^N(0) ) + \check{\mathbf{r}}(0) - \mathbf{r}(0) . \notag
\end{align}
By Lemma \ref{lm: diff Lambda_1} and Assumption \ref{assumption:mean}, $|\sum_{i=1}^N \Tr [   ( \check\Lambda_1^N(0)
 - \Lambda_1^N(0))  \Sigma_0^{i}   ]|= O(1)$.
By Corollary \ref{cor: sol Lambda_1^N Lambda_2^N} and Remark \ref{remark:checkLamC},  $|\check\Lambda_2^N(0) - \Lambda_2^N(0)| = O(1)$.  The two
upper bounds combined with Lemma \ref{lm: diff sum Lambda} imply that  $|\zeta_0^N| =O(1)$.  Recalling  Lemmas
\ref{lm: diff S} and \ref{lm: diff r}, it follows that
 $|J^{(N)}_{\rm soc}(U^d) - J^{(N)}_{\rm soc}(U^o)| = O(1)$.
Furthermore, the optimality of $U^o$ implies that $ J^{(N)}_{\rm soc}(U^d) - J^{(N)}_{\rm soc}(U^o)\ge 0$, and thus the desired result follows.
\end{proof}

\subsection{Performance comparison with the mean field game}
\label{sec:sub:compSocMFG}
The agents in the social optimization problem are cooperative with a common objective. A different solution notion is to solve a mean field game where   each agent optimizers for its own interest; this has been developed in a companion paper \cite{HY2020b}. This subsection compares the two solutions by demonstrating the efficiency gain of  social optimization with respect  to the mean field game.

Let $U^o=(u_1^T, \cdots, u_N^T)^T$ denote the social optimal control and  $U^g = (u_1^{gT}, \cdots, u_N^{gT})^T$ the set of Nash equilibrium strategies. For simplicity, we consider the  model with $D= D_0 = 0$ in \eqref{X_i}.
For the comparison, we further assume the mean and covariance matrix  of the initial states
  $\{X_i(0): 1\leq i \leq N\}$  satisfy \eqref{meancovX0}.

When all agents take the social optimal control $U^o$,   the asymptotic per agent cost based on \eqref{jiuoN} is defined as \begin{align}
  \bar{J}_{i, \rm{soc}}
& \coloneqq \lim_{N\to\infty} (1/N) \mathbb{E} V(0, X(0))  \notag \\
& = \mathbb{E}[X_1^T(0) \Lambda_1(0) X_1(0) + X_1^T(0)\Lambda_2(0) X_2(0)  ]  \notag \\
& = \Tr[\Lambda_1(0) \Sigma_0] +\mu_0^T [\Lambda_1(0)+\Lambda_2(0)]\mu_0 .\label{JiUinfN}
\end{align}
When $U^d$ instead of $U^o$ is applied, by Theorem \ref{thm: performance analysis},  the per agent cost also tends to   $\bar{J}_{i, \rm{soc}}$ as
$N\to \infty$.

When all agents take the set of Nash equilibrium strategies $U^g$, let $V_i^g$ denote the value function of agent $\mathcal{A}_i$. The asymptotic per agent cost is  defined as
\begin{align}
 \bar{J}_{i, \rm{mfg}}
\coloneqq & \lim_{N\to\infty} \mathbb{E}V_i^g(0, X(0))    \notag \\
= & \mathbb{E} \left[ X_1^T(0)   \Lambda_1^{g}(0)    X_1(0)
 +  X_1^T(0) ( \Lambda_2^{g}(0) + \Lambda_2^{gT}(0) + \Lambda_4^{g}(0)  )
X_2(0)  \right]  \notag \\
=& \Tr[  \Lambda_1^g(0) \Sigma_0 ]+ \mu_0^T[\Lambda_1^g(0) +\Lambda_2^{g}(0) + \Lambda_2^{gT}(0) + \Lambda_4^{g}(0)  ] \mu_0  .
\label{JiUginfN}
\end{align}
The ODEs of $(\Lambda_1^g, \Lambda_2^g, \Lambda_3^g, \Lambda_4^g)$ are given in Appendix~\ref{appendix: ODEs Lamg}.  We have $\Lambda_1 = \Lambda_1^g$ on $[0, T]$, since $\Lambda_1$ and $\Lambda_1^g$ satisfy the same Riccati ODE with the same terminal condition. The per agent cost for the
decentralized $\epsilon$-Nash equilibrium strategies (see \cite{HY2020b}) has the same limit  $\bar{J}_{i, \rm{mfg}}$ as $N\to \infty$.
By  \eqref{JiUinfN} and \eqref{JiUginfN},
we calculate
\begin{align}
 & \bar{J}_{i,{\rm mfg}} - \bar{J}_{i,{\rm soc}}    = \mu_0^T[ \Lambda_1^g(0) + \Lambda_2^{g}(0) + \Lambda_2^{gT}(0) + \Lambda_4^{g}(0)
 - \Lambda_3 (0)  ] \mu_0 . \notag
\end{align}
Since $(1/N) \mathbb{E} V(0, X(0)) \le \mathbb{E}V_i^g(0, X(0)) $ for each $N$, we have $\bar{J}_{i,{\rm mfg}} - \bar{J}_{i,{\rm soc}}\ge 0$.

\subsection{Comparison with mean field type control}
\label{sec:sub:compMFTC}

We describe  an  application of  mean field type optimal control to mean-variance portfolio selection.  The state process is given by
\begin{align}
dX(t) = [\rho X(t) +(\alpha- \rho) u(t)] dt+\sigma u(t) dW(t), \nonumber
\end{align}
where $X(0)=x_0>0$. For simplicity, we consider one bond and one stock with constant parameters. In the above, $X(t)$ is the wealth; $u(t)$ is the amount allocated to the stock;  $\rho$ is the interest rate of the bond; $\alpha>\rho $ is the appreciation rate of the stock; and $\sigma>0$ is the volatility of  the stock. The above state equation has an obvious generalization by considering time-dependent parameters and more than one
stock.  The cost is
\begin{align}
J(u) &= \frac{\gamma}{2} Var(X(T))-\mathbb{E} X(T) \nonumber\\
 & =\mathbb{E} \Big(\frac{\gamma}{2} X^2(T) -X(T)\Big) -\frac{\gamma}{2} ( \mathbb{E} X(T))^2,\quad \gamma>0.
\end{align}
The mean-variance portfolio selection problem has been solved for a more general case of multiple stocks \cite{ZL2000}, \cite[Chap. 6]{YZ1999}. The method there is to solve a family of problems by dynamic programming.
Alternatively, \cite{AD2011} uses the stochastic  maximum principle to derive the solution as follows. Denote
\begin{align*}
&(\rho -\alpha)^2 A_t-(2\rho A_t+\dot{A}_t)\sigma^2 =0,\\
&\rho C_t +\dot{C}_t=0,
\end{align*}
where $A_T=\gamma$ and $C_T=1$.
Then
$$
A_t= \gamma e^{(2\rho -\lambda )(T-t) },  \qquad
C_t= e^{\rho (T-t)} ,
$$
where
$\lambda = (\rho -\alpha)^2/\sigma^2$.
The optimal control law is
\begin{align}
\hat u(t) &= \frac{\alpha-\rho }{\sigma^2}[ C_tA_t^{-1}  - (X(t) -\mathbb{E}  X(t))] . \label{uhatmv1}
 \end{align}
After solving $\mathbb{E}X(t)$ from a linear ODE, one obtains
\begin{align}
\hat u(t) =\frac{\alpha-\rho }{\sigma^2}\Big[ x_0 e^{\rho t} + \frac{1}{\gamma }  e^{\lambda T -\rho(T-t)}-X(t) \Big]. \label{uhatmv2}
\end{align}

For the social optimization problem we consider the scalar model which has the state equations
\begin{align*}
dX_i(t) &= [A X_i(t)+Bu_i(t)] dt +B_1 u_i(t) dW_i(t), \quad 1\le i\le N,\\
 & :=[\rho X_i(t) + (\alpha-\rho) u_i(t)] dt + \sigma u_i(t) dW_i(t)
\end{align*}
and individual costs
\begin{align}
J_i= \frac{\gamma }{2}\mathbb{E} |X_i(T)- X^{(N)}(T)|^2 -\mathbb{E} X_i(T), \quad 1\le i\le N.\label{Jimv}
\end{align}
The social cost is $J_{\rm soc}^{(N)}=\sum_{i=1}^NJ_i$.
Suppose all agents have identical initial state $x_0$.
The mean-variance portfolio selection problem in \cite{FL16} is solved by means of solving the LQ social optimization problem and passing to an infinite population.
Below we will tailor the results in previous sections to this particular model.

To adapt to the costs \eqref{Jimv}, we slightly modify the individual costs in \eqref{J soc} by replacing the terminal cost with the new one
$$\mathbb{E}\Big\{[X_i(T) - \Gamma_f X^{(N)}(T)]_{Q_f}^2 +2K^T X_i(T)\Big\},$$
where $K\in \mathbb{R}^n$. Accordingly, we take ${\mathbf S}(T)= [K^T, \cdots, K^T]^T$ in \eqref{ODE S},  $S^N(T)= K $ in \eqref{ODESN}, and $S(T)=
K$ in \eqref{ODESlim}.

Matching the notation in \eqref{X_i}--\eqref{J soc}, we have
$Q=0$, $\Gamma=0,$ $R=0$, $Q_f = \gamma/2$,
and $\Gamma_f=1$.
We further set $K=-1/2$.
By \eqref{ODELam1},  we have
\begin{align}
{\dot \Lambda}_1 &= \Lambda_1 B(B_1\Lambda_1 B_1)^{-1} B \Lambda_1 - 2 A \Lambda_1 \nonumber \\
&=  \frac{(\alpha-\rho)^2}{\sigma^2} \Lambda_1 -2\rho \Lambda_1, \nonumber
\end{align}
where  $ \Lambda_1(T)=\gamma/2$. We obtain
$ \Lambda_1 (t)=(\gamma/2) e^{(2\rho -\lambda)(T-t) }$.
Next by \eqref{ODELam3}, $\Lambda_3=\Lambda_1+\Lambda_2$ satisfies
\begin{align}
\dot{\Lambda}_3=\frac{(\alpha-\rho)^2}{\sigma^2}
\Lambda_1^{-1} \Lambda_3^2  -2 \rho \Lambda_3, \nonumber
\end{align}
where $\Lambda_3(T)=0$. Hence $\Lambda_3=0$ and $\Lambda_2=-\Lambda_1$.
Now \eqref{ODESlim} reduces to
\begin{align}
\dot{S}=-\rho S , \notag
\end{align}
where $S(T)=K= -1/2$.

Recalling $\Lambda_2=-\Lambda_1$, we check \eqref{check u_i} and determine
\begin{align}
\Theta= \frac{\alpha -\rho}{\sigma^2},\quad \Theta_1=-\Theta, \quad \Theta_2=\frac{ \alpha-\rho}{\sigma^2} S \Lambda_1^{-1}.  \nonumber
\end{align}
The decentralized individual control is
\begin{align}
\check u_i=\frac{\alpha -\rho}{\sigma^2} [-  S(t)(\Lambda_1(t))^{-1} - (X_i(t)-\overline  X(t))  ], \quad 1\le i\le N, \label{uchecksoc1}
\end{align}
where $\overline X(t) =\mathbb{E} X_i(t) $. Clearly $-S(t) (\Lambda_1(t))^{-1}= C_t A_t^{-1} $ for all $t\in [0,T]$.

The two control laws \eqref{uhatmv1} and \eqref{uchecksoc1}   have the same form except for different interpretations of the mean term.
It is known that $\hat u$  in \eqref{uhatmv1} is not time consistent \cite{AD2011}. Given the value of $X(t_0)$ at $t_0>0$, the portfolio optimization problem re-solved on $[t_0, T]$ will generate  a  different strategy. However, for the mean field social optimization problem, the set of controls is time consistent.
We may think of an infinite population. Then given the available
states $X_i(t_0)$, $i\ge 1$, the optimization problem on $[t_0, T]$ will use  $\overline X$ as the restriction of
$\{\overline X(t), 0\le t\le T\}$ on $[t_0, T]$. The re-solved control is still given by  \eqref{uchecksoc1}.

\section{Numerical examples}
\label{sec: num}

This section presents numerical examples to illustrate  asymptotic solvability of  social optimization problems and examine performance of the associated control laws. The ODE systems are solved  using
MATLAB ODE solver \texttt{ode45}.

\subsection{Asymptotic solvability}

We consider three examples for \eqref{X_i}--\eqref{J soc}.

\begin{example}
\label{ex: Lambda solvable}
The parameter values are
$A=1$, $B=1$, $B_0=B_1=0.2$, $D_0=D=0$, $G=2$, $Q=4$, $Q_f = 2$, $R=1$,
$\Gamma=0.1$, $\Gamma_f = 0.1$, and $T=2$.
 Since  $R>0$,
$(\Lambda_1, \Lambda_2)$ has a  global solution on $[0, T]$,
implying that the social optimization problem has asymptotic solvability on $[0, T]$.  The solution of $(\Lambda_1, \Lambda_2)$ is  shown  in Fig.~\ref{fig: Lam12sol} (left panel).
\end{example}
\begin{example}
\label{ex: Lam12Rneg}
The parameter values are $A=-4$, $B=1$, $B_0=-2$, $B_1 =4$, $D_0=D=0$, $G=1$, $Q=Q_f=1$, $R=-1$, $\Gamma=4$, $\Gamma_f=2$, and $T=2$.
  Fig.~\ref{fig: Lam12sol} (middle panel)  shows that $(\Lambda_1, \Lambda_2)$ has a  global solution on $[0, T]$, suggesting that the social optimization problem has asymptotic solvability on $[0, T]$.
\end{example}

\begin{example}
\label{ex: Lam12explosion}
The parameter values are
$A=30$, $B=1$, $B_0=B_1=0.2$, $D_0=D=0$, $G=2$, $Q=-30$, $Q_f = 3$, $R=1.5$,
$\Gamma=0.1$, $\Gamma_f = 0.1$, and $T=2$.
Fig.~\ref{fig: Lam12sol} (right panel) shows that
 $(\Lambda_1, \Lambda_2)$ does not have a global solution on $[0, T]$.
Thus the social optimization problem does not have asymptotic solvability.
\end{example}

\begin{figure}[t]
\begin{center}
\begin{tabular}{ccc}
\hspace{-0.6cm}
\psfig{file=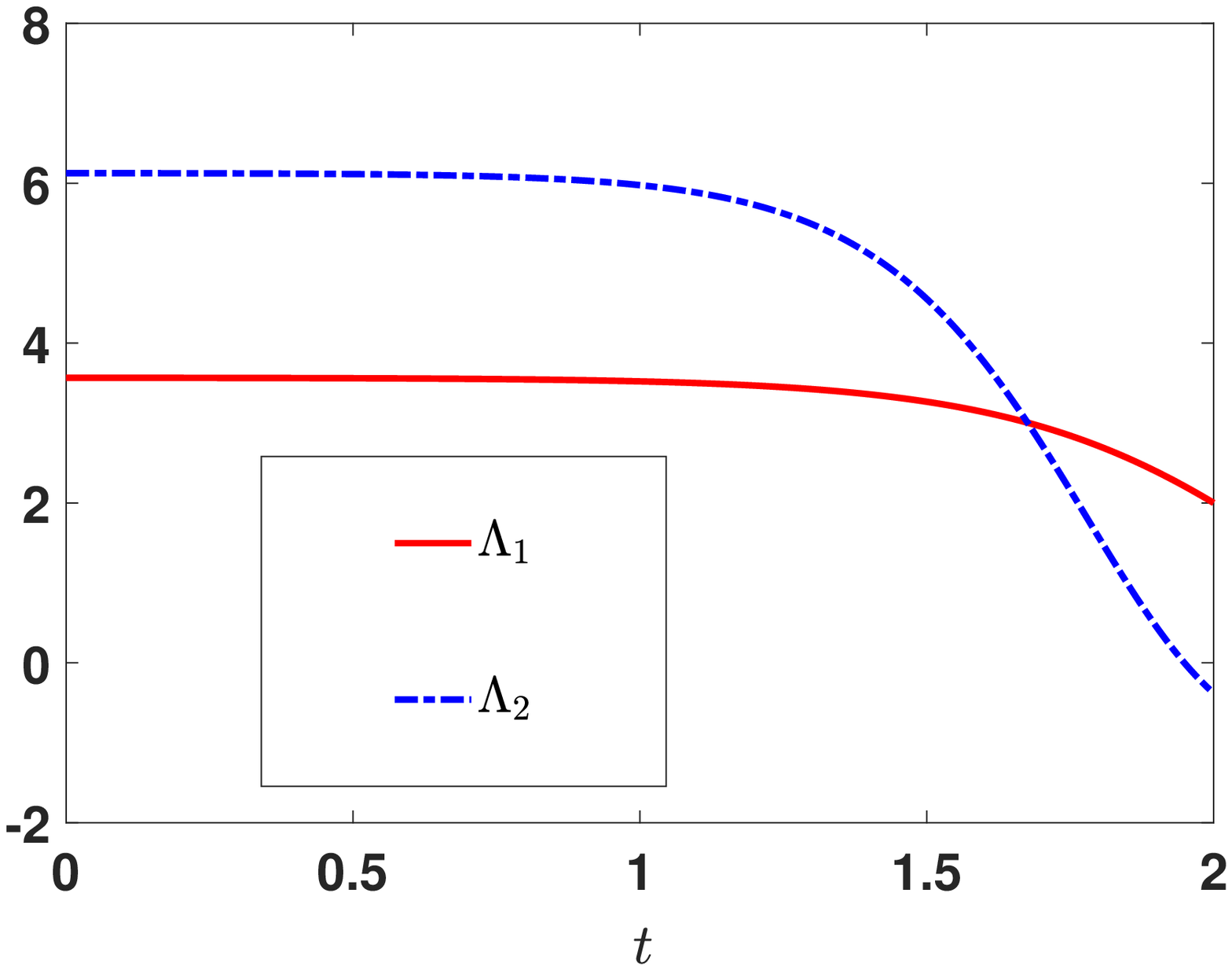, width=6.6cm, height=4cm}&
\hspace{-3cm}
\psfig{file=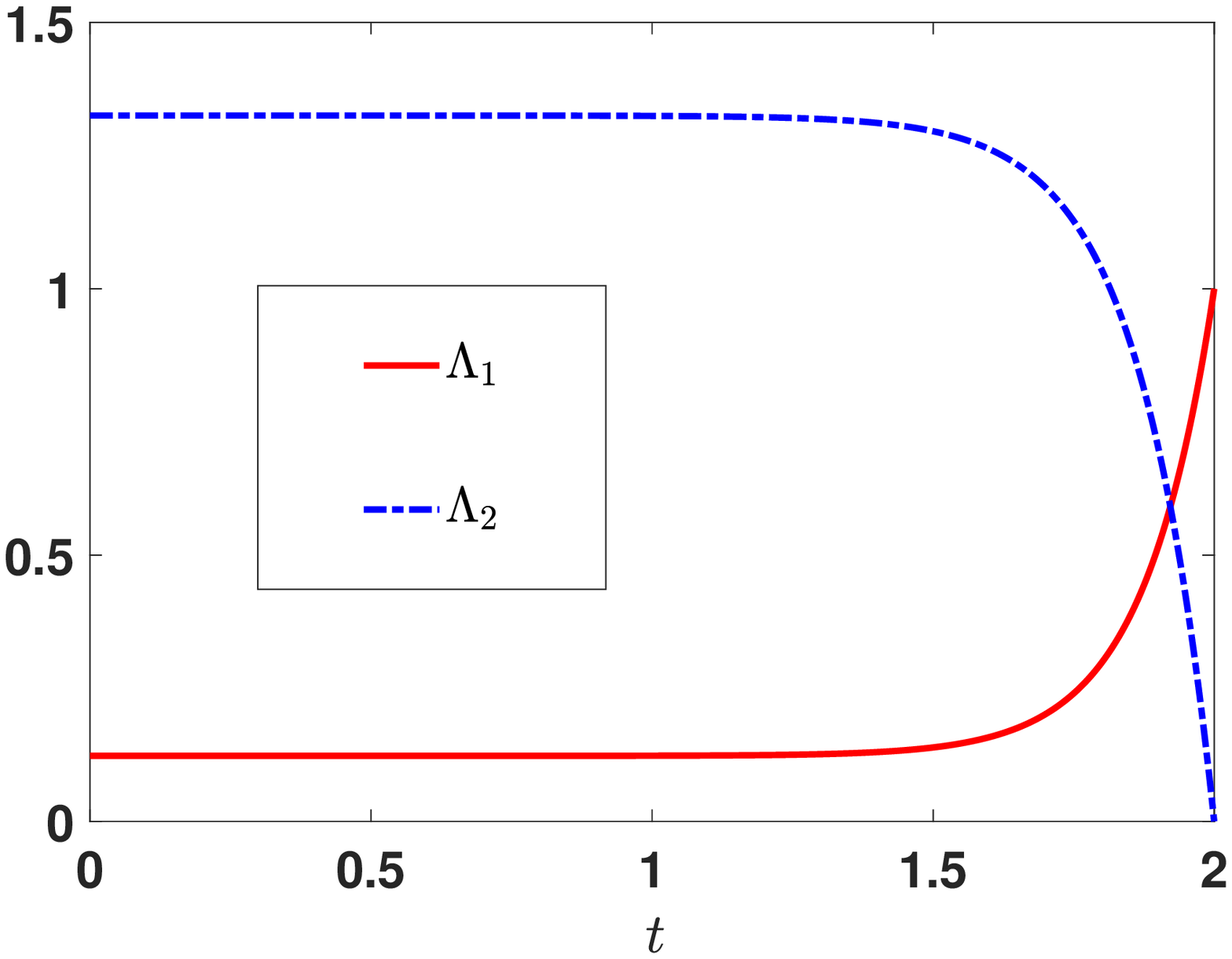, width=6.6cm, height=4cm}&
\hspace{-3cm}
\psfig{file=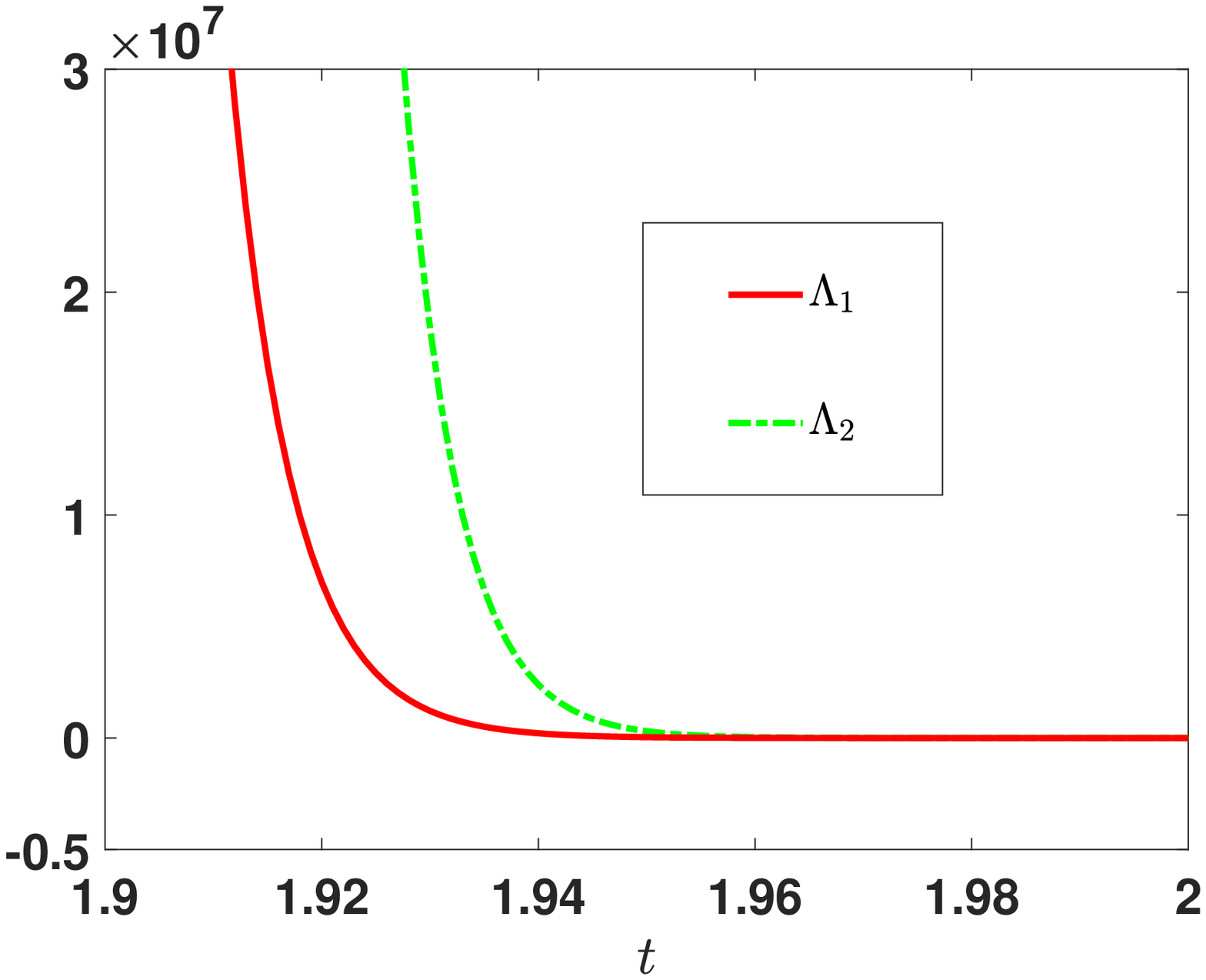, width=6.6cm, height=4cm}
\end{tabular}
\end{center}
\caption{Solvability of $(\Lambda_1, \Lambda_2)$ on $[0, T]$ with $T=2$.
Left panel: Example 1 with $R>0$.
Middle panel:  Example 2 with $R<0$.
Right panel: Example 3. }
\label{fig: Lam12sol}
\end{figure}


\subsection{Performance}

To  numerically compare $J_{\rm soc}^{(N)}(U^o)$ and  $J_{\rm soc}^{(N)}({U}^d)$, we need to solve the system~\eqref{ODELam12N} for $(\Lambda_1^N,
\Lambda_2^N)$ associated with the centralized control $U^o$ and next
the system \eqref{ODEcheckLam1N}--\eqref{ODEcheckLam22N}
for $(\check\Lambda_1^N, \check\Lambda_2^N, \check\Lambda_{12}^N, \check\Lambda_{22}^N)$ associated with the decentralized control $U^d$.
By  Theorem~\ref{thm: NSAS}, if the Riccati ODE system consisting of
\eqref{ODELam1} and \eqref{ODELam3} has a solution on $[0, T]$, then \eqref{ODELam12N} has a solution on $[0, T]$ for all  sufficiently large $N$,
and so does the system \eqref{ODEcheckLam1N}--\eqref{ODEcheckLam22N}.

Recall the necessary and sufficient condition in Section~\ref{sec:sub:solLam} for the solvability of \eqref{ODELam1} and \eqref{ODELam3}.
We will take  $Q$, $Q_f \geq 0$, and $R> 0$, under which \eqref{ODELam1} and \eqref{ODELam3} have a unique solution on $[0, T]$ \cite[Theorem 6.7.2]{YZ1999}.
With the same parameter values as in Example~\ref{ex: Lambda solvable}, we numerically solve the systems
\eqref{ODELam12N},
\eqref{ODELam1}--\eqref{ODELam2},
and \eqref{ODEcheckLam1N}--\eqref{ODEcheckLam22N}.
The initial conditions are given as $X_i(0)=1$ for all $i\geq 1$, and $\overline{X}(0)=1$.

 Fig.~\ref{fig: performance analysis} (left panel) shows that the
  difference ($\check\Lambda_{12}^N= \check\Lambda_{12}^{N T}$ for the scalar case)
\begin{align}
[ \check\Lambda_1^N(0) + \check\Lambda_2^N(0) + \check\Lambda_{12}^N(0) +
\check\Lambda_{12}^{N T}(0) + \check\Lambda_{22}^N(0)] - [\Lambda_1^N(0) + \Lambda_2^N(0)]  \notag
\end{align}
approaches $0$ as $N\to\infty$, as asserted by Lemma~\ref{lm: diff sum Lambda}.
Fig.~\ref{fig: performance analysis} (right panel) shows that the difference  $J_{\rm soc}^{(N)}({U}^d)-J_{\rm soc}^{(N)}(U^o)$ remains bounded as
$N$ increases.

\begin{figure}[t]
\begin{center}
\begin{tabular}{cc}
\psfig{file=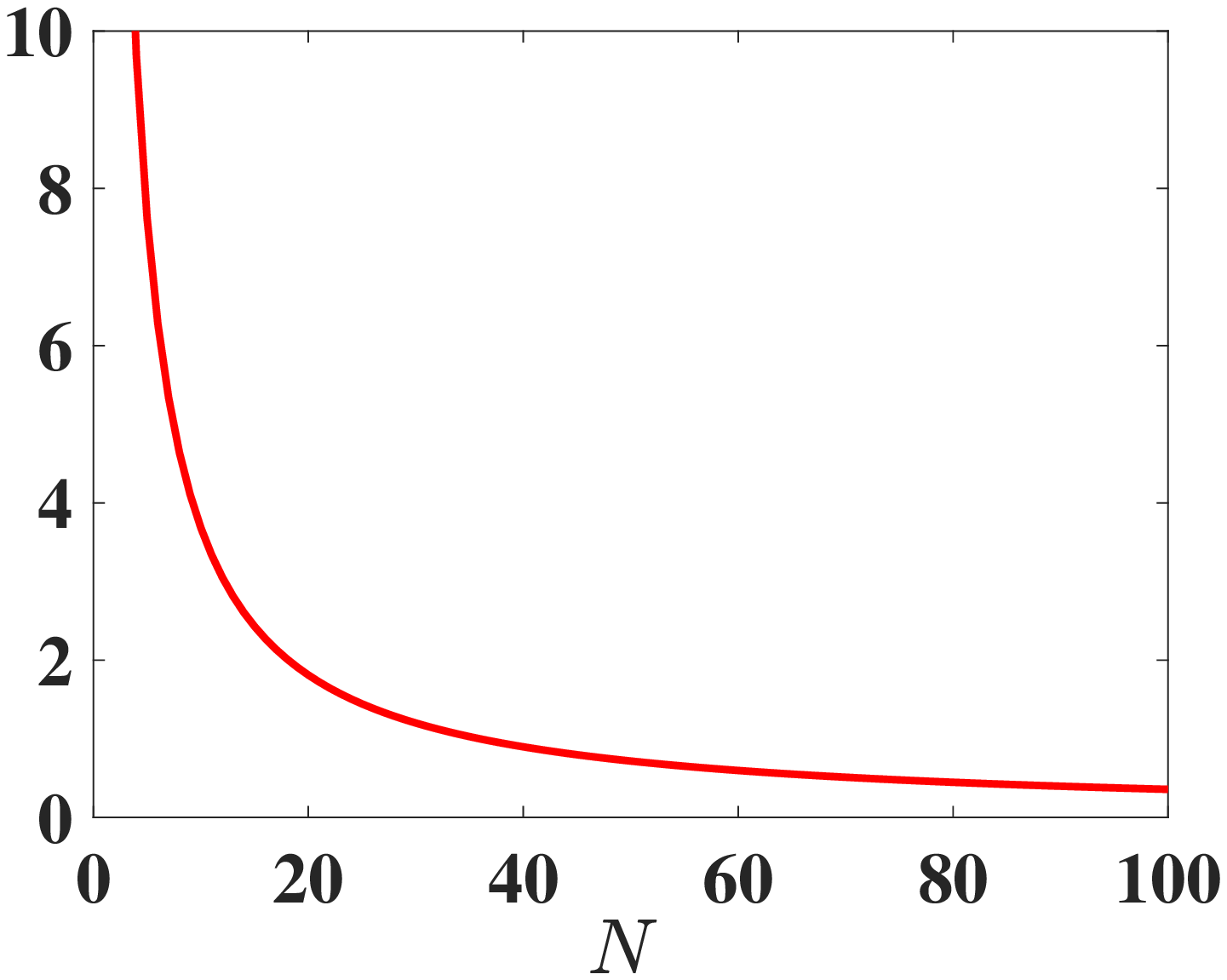, width=6.6cm, height=4cm}&
\hspace{-1cm}
\psfig{file=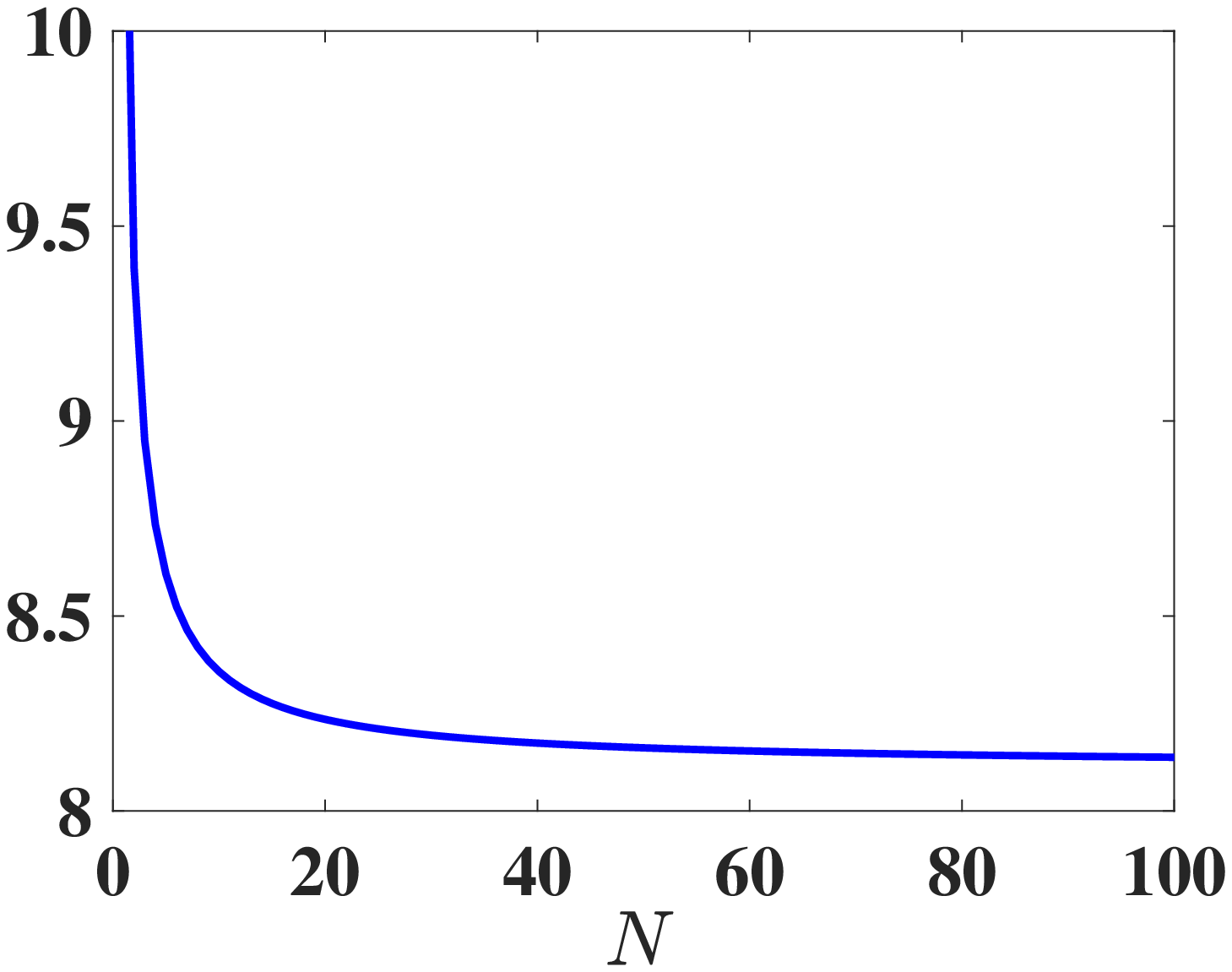, width=6.6cm, height=4cm}
\end{tabular}
\end{center}
\caption{Left panel: The difference $\check\Lambda_1^N + \check\Lambda_2^N + \check\Lambda_{12}^N + \check\Lambda_{12}^{N T} + \check\Lambda_{22}^N - (\Lambda_1^N + \Lambda_2^N)$ evaluated at $t=0$ for  $N\geq 1$. Right panel: The difference $J_{\rm soc}^{(N)}({U}^d) - J_{\rm soc}^{(N)}(U^o)$ for  $N\geq 1$. }
\label{fig: performance analysis}
\end{figure}

\subsection{Comparison between the social optimum and the mean field
equilibrium}
\label{section: compare U and U^g}

We use the model in Example ~\ref{ex: Lambda solvable} to  compare the per agent  costs $\bar{J}_{i,{\rm soc}}$ and $\bar{J}_{i,{\rm mfg}}$ for social optimization and   the mean field game, respectively. The initial states $X_i(0)$ have mean $\mu_0$ and variance $\Sigma_0$.

Fig.~\ref{fig: Lamg3} compares
$\Lambda_1^g + \Lambda_2^g + \Lambda_2^{gT} + \Lambda_4^g$ and $\Lambda_3$ on $[0, T]$ ($\Lambda_2^g = \Lambda_2^{gT}$ for the scalar case).
Note that $\Lambda_3= \Lambda_1+\Lambda_2$ on $[0,T]$.   Since
\begin{align}
\bar{J}_{i,{\rm mfg}}-\bar{J}_{i,{\rm soc}} = (\Lambda_1^g(0) +\Lambda_2^{g}(0) + \Lambda_2^{gT}(0) + \Lambda_4^{g}(0)-\Lambda_3(0)  ) \mu_0^2, \nonumber
\end{align}
 Fig.~\ref{fig: Lamg3} confirms that
the per agent cost of the  social optimal control is lower than that of the mean field  equilibrium strategy.

\begin{figure}[t]
\begin{center}
\begin{tabular}{cc}
\psfig{file=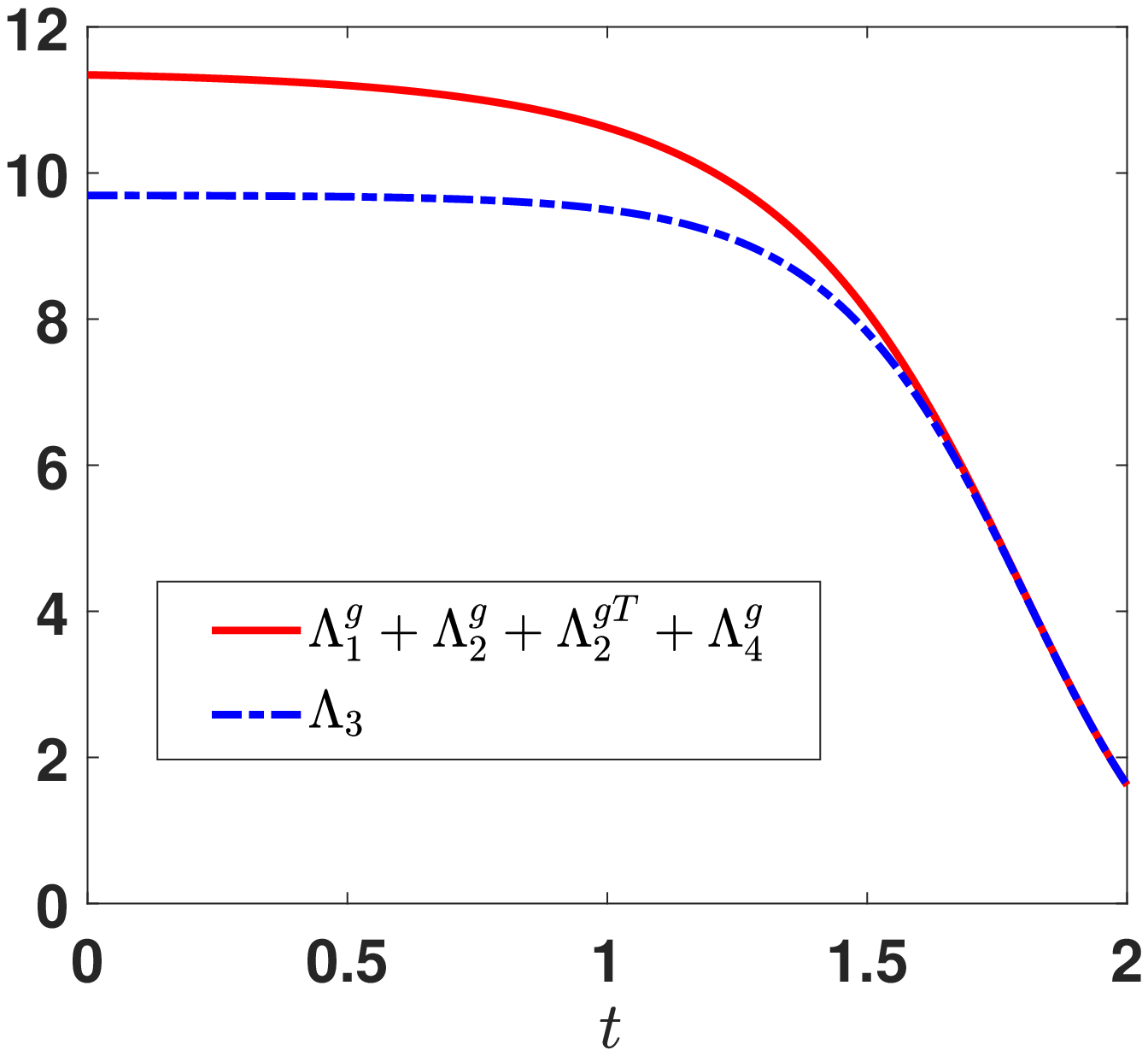, width=6.6cm, height=4cm}&
\hspace{-1cm}
\psfig{file=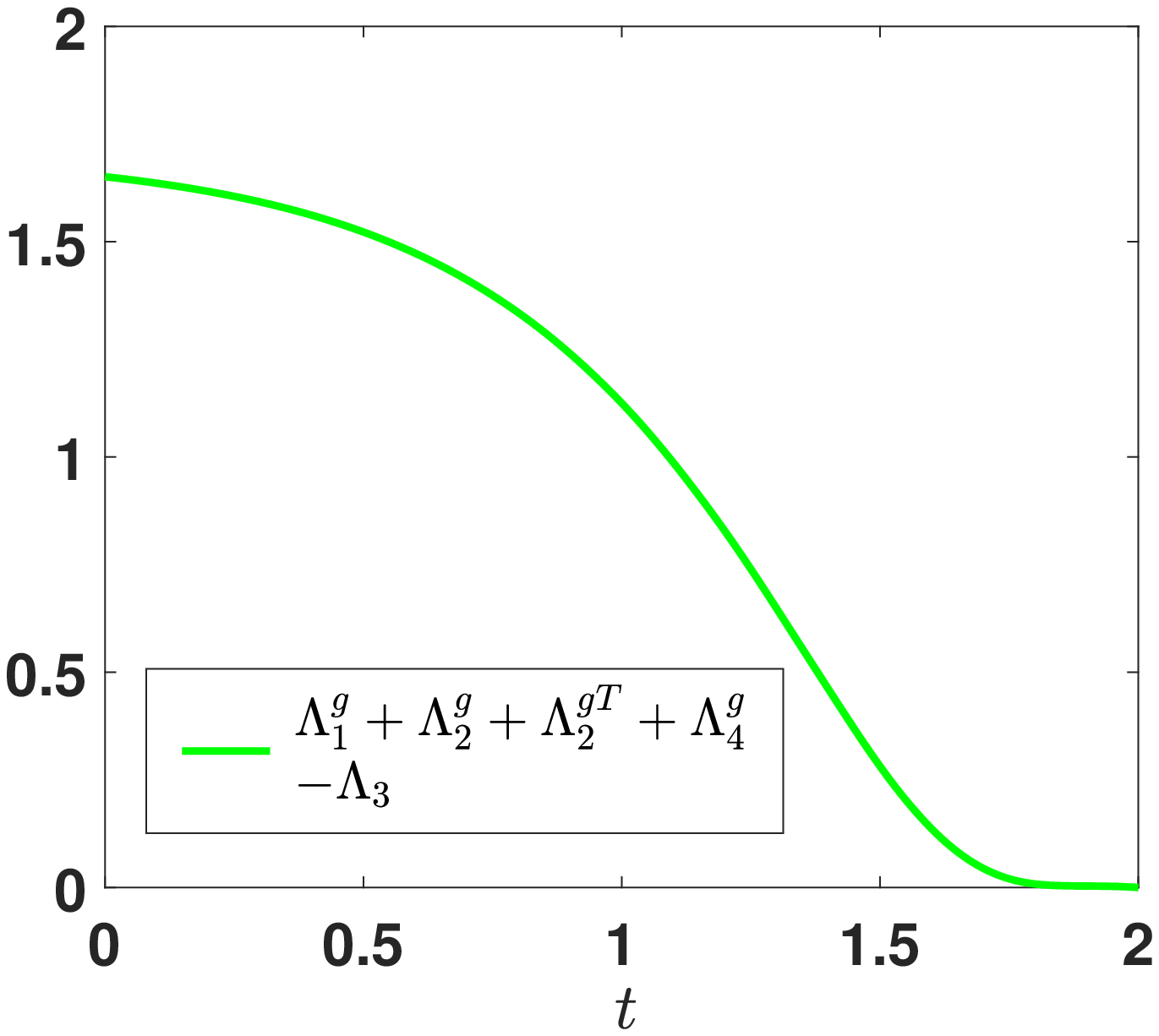, width=6.6cm, height=4cm}
\end{tabular}
\end{center}
\caption{Left panel: Comparison between $\Lambda_1^g + \Lambda_2^g + \Lambda_2^{gT} + \Lambda_4^g$ and $\Lambda_3$.
Right panel: The difference $\Lambda_1^g + \Lambda_2^g + \Lambda_2^{gT} +
\Lambda_4^g - \Lambda_3$. }
\label{fig: Lamg3}
\end{figure}

\section{Conclusion}
\label{sec: conclusion}

This paper studies asymptotic solvability for  LQ mean field social optimization problems with indefinite state and control weight matrices. The analysis involves highly nonlinear large-scale Riccati ODEs due to controlled diffusions. We derive a necessary and sufficient condition for  asymptotic solvability.
We obtain a set of decentralized individual control laws, and further show that its optimality loss  is bounded.
We further check the efficiency gain of the social optimal control with respect to the mean field game.

\appendix

\section{Proof of Lemma~\ref{lm: P submatrices}}
\label{appendix: pf P submatrices}

\begin{proof}
Write the $Nn \times Nn$ identity matrix $I_{Nn}$ as
$I_{Nn} = \diag[I_n, I_n, \cdots, I_n]$.
Let $J_{ij}$ denote the matrix obtained by exchanging the $i$th and $j$th
rows of the submatrices in $I_{Nn}$.
It is easy to check that $J_{ij} = J_{ji}$ and $J_{ij} = J_{ij}^T =
J_{ij}^{-1}$ for all $i, j$.

Denote $\mathbf{P} = ( P_{ij})_{1\leq i, j \leq N}$, where each $P_{ij}$ is an $n\times n$ matrix.
We choose arbitrary $i \neq j$, and denote $J^T_{ij}\mathbf{P} J_{ij} =
\mathbf{P}^\dagger_{(ij)}$. In this proof we write $\mathbf{P}_{(ij)}^\dagger$ as $\mathbf{P}^\dagger$ for simplicity of notation.
We multiply both sides of \eqref{ODE P} from the left by $J_{ij}^T$ and next from the right by $J_{ij}$ to get
\begin{align}
\dot{\mathbf{P}}^\dagger(t) = \mathbf{P}^\dagger \widehat{\mathbf{B}}\left( \mathbf{R} +  2 \mathcal{M}_2(\mathbf{P}^\dag)  \right)^{-1}
 \widehat{\mathbf{B}}^T \mathbf{P}^\dagger -  \mathbf{P}^\dagger
\mathbf{A} - \mathbf{A}^T \mathbf{P}^\dagger - \mathbf{Q} ,  \notag
\end{align}
where we use the following facts with $\Psi = \mathbf{A}$,
$\widehat{\mathbf{B}}$, $\mathbf{R}$, or $\mathbf{Q}$,
\begin{align}
J_{ij}^T ( \mathbf{B}_k \mathbf{e}_k ) J_{ij}
= \begin{cases}  \mathbf{B}_k \mathbf{e}_k & \text{if} \ k \neq i, j , \\
 \mathbf{B}_j \mathbf{e}_j & \text{if} \ k = i , \\
 \mathbf{B}_i \mathbf{e}_i & \text{if} \ k = j ,
\end{cases}
\quad \text{and} \quad
J_{ij}^T \Psi J_{ij} = \Psi .
\notag
\end{align}
Thus $\mathbf{P}^\dagger$ also satisfies \eqref{ODE P}.
It then follows that $J_{ij}^T \mathbf{P} J_{ij} = \mathbf{P}$ for any $ i\ne j$, and the matrix $\mathbf{P}=(P_{ij})_{1\leq i, j \leq N}$ satisfies that
\begin{align}
P_{ii} = P_{jj}, \quad P_{ij} = P_{ji}, \quad P_{ik} = P_{jk}, \quad    P_{ki} = P_{kj} , \quad \forall k \neq i, j.  \notag
\end{align}
This implies that the diagonal submatrices $\{P_{ii}, {1\leq i \leq N}\}$
are equal and all the off-diagonal submatrices $\{P_{ij}, {1\leq i\neq j \leq N}\}$ are equal. Since $\mathbf{P}$ is symmetric, now $P_{ij}=P_{ji}^T=P_{ji}$ for all $i\ne j$.  We denote $P_{ii} = \Pi_1^N$ for all $1\leq i \leq N$, and $P_{ij} = \Pi_2^N$ for all $1\leq i\neq j \leq N$.
Then \eqref{P submatrices} follows.
\end{proof}

\section{Proof of Lemmas~\ref{lm: S submatrices} and \ref{lemma:EN}}
\label{appendix: pf S submatrices}

\begin{proof}[Proof of Lemma \ref{lm: S submatrices}]
Existence and uniqueness holds since \eqref{ODE S} is a linear ODE.
The remaining proof is similar to that of Lemma~\ref{lm: P submatrices}.
We multiply both sides of the ODE \eqref{ODE S} from the left by $J_{ij}$
as in the proof of Lemma~\ref{lm: P submatrices} so that
\begin{align}
  J_{ij}\dot{\mathbf{S}}(t)
& = \mathbf{P} \widehat{\mathbf{B}} ( \mathbf{R} + 2 \mathcal{M}_2(\mathbf{P}) )^{-1} (\widehat{\mathbf{B}}^T J_{ij}\mathbf{S} + \mathcal{M}_1^T(\mathbf{P})) - \mathbf{A}^T  J_{ij}\mathbf{S} .
\notag
\end{align}
Since $J_{ij} \mathbf{S}$ and $\mathbf{S}$ satisfy the same ODE, for arbitrary $i\neq j$, we conclude that $\mathbf{S}$ takes the form \eqref{S submatrices}.
\end{proof}

\begin{proof}[Proof of Lemma \ref{lemma:EN}]
Let $J_{ij}$ be the matrix as defined in the proof of
Lemma~\ref{lm: P submatrices}.
Multiplying both sides of the identity
\begin{align}
 I = ( \mathbf{R} + 2 \mathcal{M}_2(\mathbf{P}) ) ( \mathbf{R}
+ 2 \mathcal{M}_2(\mathbf{P}) )^{-1}    \notag
\end{align}
from the left by $J_{ij}^T$ and next from the right by $J_{ij}$,
for $1\leq i\neq j \leq N$, we obtain
\begin{align}
I & = J_{ij}^T ( \mathbf{R} + 2 \mathcal{M}_2(\mathbf{P}) ) ( \mathbf{R} + 2 \mathcal{M}_2(\mathbf{P}) )^{-1} J_{ij} \notag \\
& = J_{ij}^T ( \mathbf{R} + 2 \mathcal{M}_2(\mathbf{P}) ) J_{ij} J_{ij}^T ( \mathbf{R} + 2 \mathcal{M}_2(\mathbf{P}) )^{-1} J_{ij} . \notag
\end{align}
Since $J_{ij}^T ( \mathbf{R} + 2 \mathcal{M}_2(\mathbf{P}) ) J_{ij}
 =\mathbf{R} + 2 \mathcal{M}_2(\mathbf{P})$, it follows that
$J_{ij}^T ( \mathbf{R} + 2 \mathcal{M}_2(\mathbf{P}) )^{-1} J_{ij} =
( \mathbf{R} + 2 \mathcal{M}_2(\mathbf{P}) )^{-1}$, and thus $E^{N T} =
E^N$. \end{proof}

\section{Mean field game ODEs}
\label{appendix: ODEs Lamg}

The ODEs of $(\Lambda_1^g, \Lambda_2^g, \Lambda_3^g, \Lambda_4^g)$ corresponding to the mean field game in Section~\ref{sec:sub:compSocMFG} are given as follows:
\begin{align*}
& \begin{cases}
   \dot \Lambda_1^g =  \Psi_1(\Lambda_1^g) ,    \\
 \Lambda_1^g (T) =  Q_f , \
 \mathcal{R}_1(\Lambda_1^g(t)) >0, \ \forall t\in[0, T]  ,
\end{cases}   \\ 
& \begin{cases}
 \dot\Lambda_2^g =
 \Lambda_2^g B H^g B^T \Lambda_2^g  + \Lambda_2^g B H^g B^T \Lambda_1^g
+ \Lambda_1^g B H^g B^T \Lambda_2^g    \\
  \hspace{.8cm} - \Lambda_1^g G - \Lambda_2^g (A+G) - A^T \Lambda_2^g
  + Q \Gamma ,    \\
\Lambda_2^g (T) =   - Q_f \Gamma_f  ,
\end{cases} \\
&\begin{cases}
  \dot \Lambda_3^g =     \Lambda_3^g B H^g B^T \Lambda_1^g
  + \Lambda_1^g B H^g B^T \Lambda_3^g
 + \Lambda_4^g  B H^g B^T \Lambda_2^g    \\
\hspace{.8cm} + \Lambda_2^{gT} B H^g B^T ( \Lambda_2^g + \Lambda_4^g) - \Lambda_1^g B H^g B_1^T \Lambda_3^g B_1 H^g B^T \Lambda_1^g         \\
 \hspace{.8cm}
- (\Lambda_1^g + \Lambda_2^{gT} ) B H^g B_0^T(\Lambda_1^g + \Lambda_2^g +
\Lambda_2^{gT} + \Lambda_4^g ) B_0 H^g B^T (\Lambda_1^g + \Lambda_2^g )
 \\
 \hspace{.8cm} - \Lambda_3^g A - ( \Lambda_2^{gT} + \Lambda_4^{gT} ) G
 - A^T \Lambda_3^g - G^T(\Lambda_2^g + \Lambda_4^g )  - \Gamma^T Q \Gamma
, \\
\Lambda_3^g (T) =  \Gamma_f^T Q_f \Gamma_f ,
\end{cases} \\
& \begin{cases}
  \dot \Lambda_4^g =
 \Lambda_4^g B H^g B^T ( \Lambda_1^g   + \Lambda_2^g )
  + \Lambda_1^g B H^g B^T \Lambda_4^g
 + \Lambda_2^{gT} B H^g B^T ( \Lambda_2^g + \Lambda_4^g )   \\
 \hspace{.8cm}
- (\Lambda_1^g + \Lambda_2^{gT} ) B H^g B_0^T
 ( \Lambda_1^g + \Lambda_2^g + \Lambda_2^{gT} + \Lambda_4^g ) B_0 H^g B^T
( \Lambda_1^g + \Lambda_2^g )  \\
  \hspace{.8cm}
- (  \Lambda_2^{gT} + \Lambda_4^g ) G - \Lambda_4^g A
  - G^T (   \Lambda_2^g +  \Lambda_4^g ) - A^T \Lambda_4^g
 - \Gamma^T Q \Gamma  ,    \\
\Lambda_4^g (T) = \Gamma_f^T Q_f \Gamma_f  ,
\end{cases}   
\end{align*}
where we use the notation
$H^g = ( \mathcal{R}_1(\Lambda_1^g) )^{-1}$.

\bibliographystyle{abbrv}
\bibliography{HYmfsocRef}

\end{document}